\definecolor{my-blue}{rgb}{0.0,0.0,0.6}
\definecolor{my-red}{rgb}{0.5,0.0,0.0}
\definecolor{my-green}{rgb}{0.0,0.5,0.0}
\definecolor{nicos-red}{rgb}{0.75,0.0,0.0}
\definecolor{light-gray}{gray}{0.6}
\definecolor{really-light-gray}{gray}{0.8}
\definecolor{sussexg}{rgb}{0.0,0.5,0.5}
\definecolor{sussexp}{rgb}{0.5,0.0,0.5}
\newtheorem{theorem}{\sc Theorem}[section]
\newtheorem{lemma}[theorem]{\sc Lemma}
\newtheorem{proposition}[theorem]{\sc Proposition}
\newtheorem{conjecture}[theorem]{\bf Conjecture}
\numberwithin{equation}{section}
\theoremstyle{remark}
\newtheorem{remark}[theorem]{Remark}
\newtheorem{example}[theorem]{\bf Example}
\newcommand{\be}{\begin{equation}}
\newcommand{\ee}{\end{equation}}
\def\bE{\mathbb{E}}
\def\bN{\mathbb{N}}
\def\bP{\mathbb{P}}
\def\bR{\mathbb{R}}
\def\bZ{\mathbb{Z}}
 \def\Z{\bZ}  \def\R{\bR}\def\N{\bN}
\def\e{\varepsilon}
\def\m1{\mathbf{1}}
\def\E{\bE}
\def\P{\bP} %% environment measure 
\def\funct lp{L} %a function needed
\def\funct lpbar{\bar L} %a function needed
\definecolor{darkgreen}{rgb}{0.0,0.5,0.0}
\definecolor{darkblue}{rgb}{0.0,0.0,0.3}
\definecolor{nicosred}{rgb}{0.65,0.1,0.1}
\definecolor{light-gray}{gray}{0.7}
\begin{document}

\title[Mittag-Leffler queues]
{Queuing models with Mittag-Leffler inter-event times}

\author{Jacob Butt}
\address{Jacob Butt, University of Sussex, Brighton, UK}
\email{J.Butt@sussex.ac.uk}
\author{Nicos Georgiou}
\address{Nicos Georgiou, University of Sussex, Brighton, UK.}
\email{N.Georgiou@sussex.ac.uk}
\author{Enrico Scalas}
\address{Enrico Scalas, University of Sussex, Brighton, UK and University of Rome `La Sapienza', Rome, Italy}
\email{E.Scalas@sussex.ac.uk}
\email{enrico.scalas@uniroma1.it}

\keywords{GI/ GI/1 queue, Mittag-Leffler queues, queue length, recurrence, transience, fractional derivatives, time-changed queue, semi-Markov process, mixing times, scaling limits for the queue length}
\subjclass[2000]{Primary: 60K25, 60F17, Secondary: 60K15} 
\date{\today}
\begin{abstract}
We study three non-equivalent queueing models in continuous time that each generalise the classical M/M/1 queue in a different way. Inter-event times in all models are Mittag-Leffler distributed, which is a heavy tail distribution with no moments. For each of the models we answer the question of the queue being at zero infinitely often (the `recurrence' regime) or not (the transient regime). Aside from this question, the different analytical properties of each models allow us to answer a number of questions such as existence and description of equilibrium distributions, mixing times, asymptotic behaviour of return probabilities and moments and functional limit theorems.   
\end{abstract}
\maketitle

\section{Introduction}
The theory and application of models for queuing systems became an integral part of many disciplines, playing an important role in areas such as health-care \cite{Fomundam07}, telecommunications \cite{Giambene14}, and operations research \cite{Larson81} among others. The theoretical basis for much of modern queuing theory lies in the extensively studied M/M/1 queue model \cite{Abate88, Bose13, Kendall53}. This model uses exponentially distributed inter-arrival and service times for the customers, rendering it amenable to rigorous mathematical treatment and making it a good starting point for many situations. Being a simple model, however, means it lacks a number of properties such as preservation of memory, which are often desirable when studying more complex systems. There are several examples in which emory effects are necessary; for example in studying hysteresis effects in biological and epidemiological models \cite{Kopfova06}.

One natural extension is to consider more general distributions for the arrival and service times, leading to GI/GI/1 queue models (see \cite{Medhi02} section 7). The analysis of these models becomes more robust when inter-arrival and service times have moments, but it becomes more challenging in the absence of all moments. Nonetheless, heavy-tailed distributions appear naturally in queuing models, see  for example \cite{Foss, Whitt2}.
One particular question of interest is whether the queue empties infinitely often and the length has an equilibrium distribution when the inter-arrival and service times do not have all moments. This stability of the first-in, first out (or first come -first served) queue is characterised by the value of the load (or the traffic intensity), which is the ratio of the first moments of inter-arrival and service times so a different version of this characterisation is needed when both of these moments do not exist.   

Different methods for handling queuing models with heavy-tailed distributions have began to emerge of late  \cite{Carpinteri97}, using techniques from the field of fractional calculus. These techniques are particularly useful when the model has Mittag-Leffler distributions for waiting times \cite{Gorenflo20, Mainardi14, Mieghem20} as the connection of the Mittag-Leffler function to fractional calculus is well understood \cite{Mainardi20}. Its properties make the models that use the Mittag-Leffler distribution amenable to rigorous mathematical analysis and exact results.
Moreover, It has become increasingly common to introduce fractional operators into standard models in order to introduce memory effects. These models are also developed with applications in mind; for example see \cite{Sun18} and \cite{Tarasov19} which give a good overview of some applications in engineering and finance respectively.

Recently, in \cite{Cahoy15}, the Caputo fractional derivative appeared in the system of differential equations for the M/M/1 queue. In this fractional queue model, the inter-arrival and service times are now characterised using the Mittag-Leffler distribution with parameter $\alpha \in (0,1]$, where setting $\alpha = 1$ recovers the classical model. It is shown that this model can be described analytically as a time-changed version of the standard M/M/1 queue through the use of an inverse stable subordinator. These subordinators are often used in the study of fractional models and introduce a random timescale and memory effects into the model  \cite{ Meerschaert13, Meerschaert11}. In particular, when $\alpha \in (0,1)$, the model now loses the memoryless property associated with the classical M/M/1 queue and becomes a semi-Markov model.
In addition, modifications of the model in \cite{Cahoy15} and similar fractional queue models have been studied in the recent past using the same subordinator methods, including the addition of catastrophes to the model \cite{Ascione18}, as well as a related fractional Erlang queue model \cite{Ascione20} and a fractional birth-death model \cite{Curinao22, Orsingher11} which can be used to model the queue length of a time-changed queue.

In this article we study three different ways to introduce a single server queue with Mittag-Leffler inter-event times. The first one is precisely the time-changed  M/M/1 model in which the evolution of the system depends on events of a background Poisson process. Whether the Poisson event is an arrival or a departure is decided by a coin flip. While the study of this queue falls under the  \cite{Ascione20, Cahoy15, Curinao22, Orsingher11}, we offer an expression and evolution equation for the moment generating of the queue length, as well as asymptotic behaviour for return probabilities, moments and mixing times to equilibrium. The second model is a generalisation of a Gillespie-type queue which has a renewal structure. After every event, competing independent waiting times re-start for arrivals and departures, and the fastest one decides the type of event. Finally, the third model is the fully generalised GI/GI/1 queue with Mittag-Leffler distributed inter-arrival and service times. A crucial question for all models is that of recurrence and transience of the queue length, where the notions are suitably interpreted for these models.

The original motivation for this article came from a need to predict certain financial markets. Over the past few decades numerous studies have been conducted aiming to examine the behaviour of times between trades in financial markets. In a number of cases (see for example \cite{Raberto02, Sabatelli02}) waiting times have been found to match nicely with heavy tailed distributions such as the Mittag-Leffler distribution (i.e. they exhibit no moments at all).  As models for such markets (referring in particular to models such as the Double Auction models in \cite{Cont10, Radivojevic14}) can be related to GI/GI/1 queue models, we expect the development of queueing systems with Mittag-Leffler distributed waiting times to be useful in the near future.

\subsection{Structure of the article:} In Section \ref{sec:preliminaries} we describe three basic equivalent descriptions of the Markovian M/M/1. The equivalence of these three models hinges on the Markov property. Each of these will lead to a distributionally different fractional queuing model. 
In Section \ref{sec:results} we record the results for each of the models. The proofs of these results is the rest of the article.  In Section \ref{sec:model1} we revisit the fractional model in \cite{Cahoy15} where we prove the fractional evolution of the moment generating function as well as mixing times to equilibrium in the recurrent regime. The proofs that are closely mimicking existing proofs in the literature are postponed to Appendices \ref{sec:A}, \ref{sec:B} and \ref{sec:C}, however we do include them for completion and easy access. Section \ref{sec:model2} is dedicated to the renewal type queue. Because of the renewal structure this one can be studied with classical tools. 
Finally in Section \ref{sec:model3} we study the fully generalised GI/GI/1 queue with Mittag-Leffler distributed arrival and service times. We establish and prove a functional limit theorem for this generalised model, and classify transient and recurrent regimes in this context.

\subsection{Acknowledgments} Jacob Butt, Nicos Georgiou and Enrico Scalas would like to thank the Isaac Newton Institute for Mathematical Sciences, Cambridge, for support and hospitality during the programme Fractional Differential Equations (FDE2) where work on this paper was undertaken. This work was supported by EPSRC grant no EP/R014604/1. Nicos Georgiou and Enrico Scalas was also partially funded by the Dr.\ Perry James (Jim) Browne Research Center at the Department of Mathematics, University of Sussex.

\section{Preliminaries}
\label{sec:preliminaries}

	\subsection{Three equivalent M/M/1 models} 
	
	In this article we consider three generalisations of the classical M/M/1 queue. 
	Each of the models we are considering are natural extensions of different equivalent ways to define the continuous time 
	M/M/1 queue. 
	
	All three classical models can be viewed as a time-change of the discrete queue, which we present first, in full generality.  
	
	Let $\{ Q_n \}_{n \ge 0}$ denote a Markov chain on the non-negative integers, 
	which represents the length of the queue 
	at discrete time $t = n$. Initially, at $n=0$ the queue has $Q_0 = i_0 \in \Z_+$ and after
	 that it evolves according to the homogeneous transition probabilities 
	 \[
	 p_{i,j} = P\{ Q_{n+1} = j | Q_n = i\}, 
	 \] 
	given by
	\begin{align} \label{Transition probilities}
   		 p_{k,k} &= \beta, \\
   		 p_{k,k+1} &= \begin{cases}
     					   1 - \beta,  &\text{ if } k = 0 \\
     					   (1-\beta) p,  &\text{ otherwise,}
  					  \end{cases} \\
  		 p_{k, k-1} &= \begin{cases}
      					  0, &\text{ if } k = 0 \\
        					(1-\beta) (1-p) &\text{ otherwise.}
     				 \end{cases}
	\end{align}
	All other transition probabilities equal zero. 
	Parameter $\beta$ can equal 0. This just means that there is either an arrival or a departure in any time interval 
	in which the queue length is not empty. 
	It can also equal 1 but in that case the queue does not evolve. 
 
	Ergodicity properties (or lack of) for the Markov chain are well-known (see, for instance, \cite{Norris98}),
	as the transition probabilities $\{p_{i,j}\}_{i,j \in \Z_+^2}$ represent a discrete \emph{birth and death chain.} 
	In particular we have that %\cite{XXX - Norris?}
		\[
		Q_n \text{ is positive recurrent } \Longleftrightarrow p < 1/2, \quad 
		Q_n \text{ is null recurrent } \Longleftrightarrow p = 1/2.
		\]
	In these cases the queue empties infinitely often (as $Q_n= 0$ for infinitely many $n$) and when $Q_n$ 
	is positive recurrent, the queue has an equilibrium and unique invariant distribution that is computable and given by
		\be \label{eq:equilibriumform}
		\lim_{n\to \infty} \P\{ Q_n = j \}= \pi_j = \frac{1-2p}{1-p}\Big(\frac{p}{1-p}\Big)^{j}.
		\ee
	
	The discrete Markovian queue can be made into a continuous time Markovian queue in several 
	equivalent ways, three of which we present here. In the sequel, when we create a time-fractional queue, 
	we will have that each of these ways lead to a different (non-Markovian) queue.  
	
	\textbf{Model 1: Time-changed discrete chain.}  A way to define the continuous time $M/M/1$ queue is by 
	performing a global time change on the discrete model. To this end, let $N_1(t)$ denote a Poisson process 
	with rate $1$, and fix the parameters of the discrete model to $\beta = 0$ and $p>0$. Then, the length $L_t^{(1)}$ 
	is defined as  
	\be \label{eqn: model 1 equality}
	L_t^{(1)} = Q_{N_{1}(t)}.
	\ee    
	The equality above is an a.s.\ equality in the following way: At the time 
	of the $n$-th Poisson event, in order to decide if it corresponds to an arrival or a departure, 
	look at the type of event the discrete queue  (whether it is arrival or departure), and that is the type of event 
	the continuous queue will have.   
	
	There is nothing special about the rate of the Poisson process being 1, but it motivates the construction of Model 3 below.
	\\
	
	\textbf{Model 2: The renewal approach.} Consider two independent i.i.d.\ sequences of exponential random variables 
	with rate parameters $\lambda$ and $\mu$ respectively, $\{ X_{i}^{(\lambda)}\}_{i \ge 1}$ and  $\{ X_{i}^{(\mu)}\}_{i \ge 1}$. 
	Define 
	\[T_i = X_i^{(\lambda)}\wedge X_i^{(\mu)} \sim \text{Exp}(\lambda+ \mu).\]
	Then for any $t>0$, define  
	\[
	N_{\lambda+\mu}(t) = \max\Big\{ k: \sum_{i=1}^{k} T_i \le t <  \sum_{i=1}^{k+1} T_i  \Big\}.
	\]
	As above, $N(t)$ is a Poisson Process with rate $\lambda+\mu$, but constructed slightly differently 
	from Model 1. Moreover, when the Poisson process ticks for the $n$-th time, we interpret the events as  
	\[
	\text{ arrival of a customer at the $n$-th event } \Longleftrightarrow X_i^{(\lambda)}\wedge X_i^{(\mu)} = X_i^{(\lambda)},
	\]
	otherwise we have a completion of service. Then the queue length $L^{(2)}_t$ at time $t$ is given by 
	\be
	L^{(2)}_t = Q_{N_{\lambda+\mu}(t)},
	\ee
	where the parameters for the discrete (embedded) chain $Q_n$ are  
	$p = \lambda(\lambda+\mu)^{-1} = \P\{ X_i^{(\lambda)}\wedge X_i^{(\mu)} = X_i^{(\lambda)}\}$ and $\beta = 0$.
	\\
	
	\textbf{Model 3: Independent arrivals and departures.}
	For a distributional equality in \eqref{eqn: model 1 equality} of Model 1, 
	without looking at the evolution of $\{Q_n\}_{n \ge 1}$, go through the events of $N_1(t)$ 
	and flip an independent coin for each one (and independent of $N_1(t)$), marking the Poisson events as success or failure. 
	Let $p$ denote the probability of success and $1-p$ the probability of failure.  
	This creates an arrival and a departure process $N_p(t)$ and $N_{1-p}(t)$ respectively. 
	These are \emph{thinned out} Poisson processes and are independent of each other. 
	
	 Every time $N_p(t)$ ticks, there is an arrival in the system while every time $N_{1-p}(t)$ ticks, 
	 there is a completion of a service time and, if there is a customer in the system, they exit reducing the queue length by 1.  
	 The queue length in this case is 
	\be
	L^{(3)}_t = Q_{N_{p}(t)+N_{1-p}(t)} \stackrel{\mathcal D}{=} Q_{N_{1}(t)} = L^{(1)}_t .
	\ee

	This can be generalised to having two independent processes $N_{\lambda}(t)$ and $N_{\mu}(t)$ representing 
	the arrival process and the departure process respectively. Then  using $p = \lambda(\lambda + \mu)^{-1}$ and 
	$\beta = 0$ for the discrete queue $\{Q_n\}_{n \ge 1}$ we have 
	\be
	L^{(3)}_t = Q_{N_{\lambda}(t)+N_{\mu}(t)} \stackrel{\mathcal D}{=} Q_{N_{\lambda+\mu}(t)} .
	\ee
	
	Moreover, using the interpretation of the model that we have two independent arrival and departure processes 
	there is an explicit and useful formula calculating the queue length at any given time  
	\be \label{two queue relation}
   		 L_t = \left(N_{\lambda}(t) - N_{\mu}(t)\right) 
		 - \inf_{0 \le s \le t}\left\{N_{\lambda}(s) - N_{\mu}(s) \right\}.  
	\ee
	The proof of this a.s.~equality goes by induction on the number of Poisson points in the interval $[0,t]$. 
	As such, the proof works with any two counting processes to represent arrivals and departures, 
	not just when these are Poisson processes, as long as the number of events in any time interval is almost surely finite.  
	A proof of equation \eqref{two queue relation} can be found in the Appendix.
	
		 			\begin{figure}[h]
  \includegraphics[width=\linewidth]{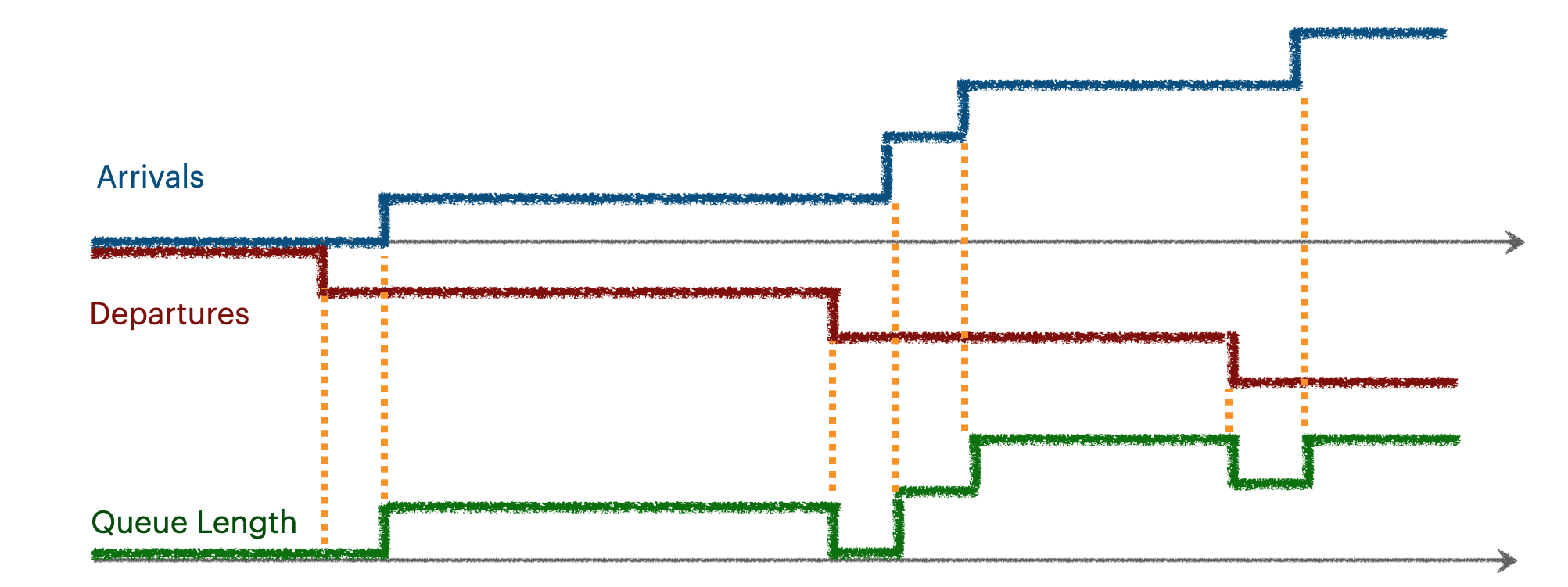}
  \caption{A schematic of the queue length formula as a reflected process of the difference between arrivals and departures} 
  \label{fig:schema}
\end{figure}

\section {Results}
\label{sec:results}

\subsection{Heavy tailed models}

	As stated earlier, we will be extending each of the three equivalent 
	Markovian queuing systems into queuing systems that are generated from 
	Mittag-Leffler inter-event times. The three generalisations however will not be equivalent, 
	and this demonstrates the need to find the correct fractional model in applications that 
	demonstrate heavy tailed inter-event times. We first record some properties 
	of the Mittag-Leffler function and distribution that are used throughout this article, 
	followed by our results in each of the three models. An extensive review of the Mittag-Leffler 
	function can be found in \cite{Mieghem20} (see also the references therein).
	
	The density of the Mittag-Leffler distribution of power index $\alpha \in (0,1]$ is
	\[
	f_\alpha(x) = x^{\alpha-1}E_{\alpha, \alpha}(-x^\alpha) 
	= x^{\alpha -1}\sum_{k=0}^\infty\frac{(-1)^kx^{\alpha k}}{\Gamma(\alpha+\alpha k)},
	\]
	with c.d.f.\ given by 
	\[
	F_\alpha(x) = \P\{X_{\alpha} \le x\} 
	= 1- E_{\alpha,1}(-x^\alpha) = 1- \sum_{\ell=0}^\infty\frac{(-1)^\ell x^{\alpha \ell}}{\Gamma(1+\alpha \ell)}.
	\]
	Note that the power series above converge absolutely, since the generalised Mittag-Leffler function 
	with parameters $\alpha, \beta$ 
	\[
	E_{\alpha, \beta}(z) = \sum_{\ell=0}^\infty\frac{z^{ \ell}}{\Gamma(\beta+\alpha \ell)}
	\]
	also does. 
	
	When $\alpha = 1$ the density and c.d.f.\ above coincide with those of the exponential distribution of 
	parameter 1. We can scale the Mittag-Leffler distribution the same way as the exponential 
	distribution can be scaled, namely for $\lambda>0$
	\be \label{eq:outscale}
	X_{\alpha, \lambda} = \frac{1}{\lambda} X_{\alpha}.
	\ee
	Its density and c.d.f.\ are given by 
	\be\label{eq:ml-time-scale}
	f_{\alpha, \lambda}(x) = \lambda^\alpha x^{\alpha-1}E_{\alpha, \alpha}(-\lambda^\alpha x^\alpha), 
	\quad F_\alpha(x) =  1- E_{\alpha,1}(-\lambda^\alpha x^\alpha).
	\ee
	
	\begin{remark}
	The usual way we see the scaling $\lambda$ in the literature is that the c.d.f.~is given as 
	\[F_\alpha(x) =  1- E_{\alpha,1}(-\lambda x^\alpha).\] 
	This would correspond to  
	\[
	X_{\alpha, \lambda} = \frac{1}{\lambda^{1/\alpha}} X_{\alpha},
	\]
	which is notationally slightly more cumbersome for our purposes. So in this article we are using \eqref{eq:outscale} which will make the index $\alpha$ appear as in \eqref{eq:ml-time-scale}. \qed
	\end{remark}
	
	Finally, we are using the following asymptotic property of $E_{\alpha,1}$, 
	\be\label{eq:asym1}
	1- F_{\alpha}(x) = E_{\alpha,1}(-x^\alpha) \sim \frac{\sin(\alpha \pi) \Gamma(\alpha)}{\pi}x^{-\alpha}, \quad x \to \infty,
	\ee
	without any particular mention.
		\subsection{The (single) time fractional queue}
	
	For this model, we use the semi-Markov approach taken in \cite{Georgiou15} for a global time change 
	on the discrete queue process; the interested reader can see \cite{Gikhman04} for a more general overview of such methods. It is equivalent to the counting process being a Fractional Poisson Process 
	$N^{(\alpha)}_{\lambda}(t)$, 
	where inter-event times 
	$T_i \sim X_{\alpha, \lambda}$ are Mittag-Leffler distributed with power index $\alpha$ and scale $\lambda$, 
	with density given  by \eqref{eq:ml-time-scale}.
	
	Let the queue length for this model be defined by  
	\be\label{eq:q-length-1}
	L^{(1)}_{\alpha, \lambda}(t)= Q_{N^{(\alpha)}_{\lambda}(t)}.
	\ee 
	For the discrete model in this case we assume that $\beta = 0$ in \eqref{Transition probilities} (just for simplicity) and that $L^{(1)}_{\alpha, \lambda}(0)= Q_{N^{(\alpha)}_{\lambda}(0)}= Q_0 = 0$. 
	 As such, we have the forward Kolmogorov equations for the queue length
	 \be\label{eq:kolm}
    		p_{0,i}(t) = \P\{ L^{(1)}_{\alpha, \lambda}(t) = i\} =
	 		\bar{F}_{{\alpha, \lambda}}(t) \delta_{0}(i) 
	 			+ \sum_{k \in \{(i - 1)\vee 0,  i + 1\}}q_{k, i}\int^t_0 p_{0,k}(u)f_{\alpha, \lambda}(t-u) du.
	\ee
	Above we shorthanded $\bar{F}_{{\alpha, \lambda}}(t)  = 1 - F_{{\alpha, \lambda}}(t)$ 
	for the survival function (c.c.d.f.) of $X_{\alpha, \lambda}$. Equation \eqref{eq:kolm} holds because 
	we are assuming that we are starting from a renewal point at $t=0$. The range of index $k$ comes from the the fact that 
	$q_{k,i} =0$ if $k$ is not in that range.
	Using Laplace transforms, one can show (see Appendix \ref{sec:C} for a direct derivation) that the temporal evolution of $p_{i}(t)$ satisfies the 
	time-fractional differential equation 
	\be \label{fractional kolmogorov eqn}
   	 \frac{d^{\alpha}p_{i}(t)}{dt^{\alpha}} = - \lambda^{\alpha} p_{i}(t) +  \lambda^{\alpha} p p_{i-1}(t) + \lambda^{\alpha} (1-p) p_{i+1}(t), \quad i \ge 1.
	\ee
	Similarly, we can find the boundary conditions 
	%\note{Check boundary equations, they disagree with original write-up}
		\begin{align} \label{fractional kolmogorov boundary eqn}
  			  \frac{d^{\alpha}p_{0}(t)}{dt^{\alpha}} = - \lambda^{\alpha} p p_{0}(t)
			   + \lambda^{\alpha}  (1-p) p_{1}(t).
		\end{align}
	Operator $\displaystyle \frac{d^{\alpha}p_i(t)}{dt^{\alpha}} $ is the Caputo derivative \cite{Daftardar13} of $p_i(t)$. 
	For any function $f$ it is defined by 
	\[
	\frac{d^{\alpha}}{dt^{\alpha}} f(t) = \frac{1}{\Gamma(1 - \alpha)}\int_{0}^t (t - u)^{-\alpha} \frac{d}{du}f(u) \,du.
	\]
	
	As mentioned earlier, this queue is the same as the fractional queue studied by \cite{Cahoy15}. We would like to extend some of their results in this article to recover a full expression for the temporal evolution of the moments of the queue length, as well as provide a proof of convergence to invariant distribution and some asymptotic results.
	
	We offer the formula for the Laplace transform of the moment generating function 
	\[ 
	M_{\alpha, \lambda}(z,t) = \E(e^{zL^{(1)}_{\alpha, \lambda}(t)}) 
	\] 
	where 
	$L^{(1)}_{\alpha, \lambda}(t)= Q_{N^{(\alpha)}_{\lambda}(t)}$ when we assume that the queue is empty at time 
	$t = 0$ and zero is a renewal point for the Mittag-Leffler inter-event times. Define
	 \be\label{r1r2}
	r_{1}(s) 
	=  \frac{1+s^{\alpha} + \sqrt{(1 + s^{\alpha})^2 -4p(1-p)}}{2p}, \quad r_{2}(s) 
	=  \frac{1+s^{\alpha} - \sqrt{(1 + s^{\alpha})^2 -4p(1-p)}}{2p}.
	\ee
	
	\begin{theorem}\label{thm:Q1mgf} Let $p_{0,0} = \P_0\{ L^{(1)}_{\alpha, \lambda}(t) =0 \}$. 
	Then the moment generating function satisfies the fractional differential equation
	\[
	\frac{d^{\alpha}}{dt^{\alpha}}M_{\alpha, 1}(z,t) 
	=(1-p)(1 - e^{-z}) p_{0,0}(t) +  \big(pe^z  - 1  + (1-p)e^{-z}\big)M_{\alpha, 1}(z,t)
	\]
	and has the Laplace transform 
	  \be\label{mom}
	  \widetilde{M}_{\alpha, 1}(z,s) = \mathcal L\{ M_{\alpha, 1}(z,t)\}(s)
	  =\frac{-s^{\alpha-1}}{p(e^z - r_1(s))(1-r_2(s))}.
	  \ee
	\end{theorem}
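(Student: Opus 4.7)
The plan is to start from the fractional Kolmogorov forward equations \eqref{fractional kolmogorov eqn}--\eqref{fractional kolmogorov boundary eqn} for $p_i(t) = \P_0\{L^{(1)}_{\alpha,1}(t) = i\}$ and pass to the generating function. Since the Caputo derivative is a linear operator and $\sum_{i \ge 0} e^{zi} p_i(t) = M_{\alpha,1}(z,t)$ converges absolutely for $z$ in a neighborhood of $0$ (the $p_i(t)$ are nonneg.\ probabilities summing to $1$, and by the bounded-increments structure of $Q_n$ we get uniform geometric tails), we can multiply the $i$-th Kolmogorov equation by $e^{zi}$ and sum, interchanging the sum with $d^\alpha/dt^\alpha$ by dominated convergence. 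The shift identities
\[
\sum_{i \ge 1} e^{zi} p_{i-1}(t) = e^z M_{\alpha,1}(z,t), \qquad \sum_{i \ge 1} e^{zi} p_{i+1}(t) = e^{-z}\bigl(M_{\alpha,1}(z,t) - p_0(t) - e^z p_1(t)\bigr)
\]
combined with the $i=0$ equation collapse to
\[
\frac{d^\alpha}{dt^\alpha}M_{\alpha,1}(z,t) = \bigl(pe^z - 1 + (1-p)e^{-z}\bigr) M_{\alpha,1}(z,t) + (1-p)(1 - e^{-z}) p_{0,0}(t),
\]
after the $p_1(t)$ terms cancel — this establishes the fractional ODE in the statement.

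For the Laplace transform, I apply $\mathcal L$ in $t$ to this identity using $\mathcal L\{d^\alpha f/dt^\alpha\}(s) = s^\alpha \tilde f(s) - s^{\alpha-1} f(0)$ and the initial value $M_{\alpha,1}(z,0) = 1$ (the queue starts empty). This yields the algebraic relation
\[
\widetilde M_{\alpha,1}(z,s)\bigl[s^\alpha + 1 - pe^z - (1-p)e^{-z}\bigr] = s^{\alpha - 1} + (1-p)(1-e^{-z})\widetilde p_{0,0}(s).
\]
The bracket on the left, multiplied by $e^z$, is the quadratic $-p\bigl(e^{2z} - (s^\alpha+1)p^{-1}e^z + (1-p)/p\bigr)$ in the variable $e^z$, whose two roots are exactly $r_1(s)$ and $r_2(s)$ defined in \eqref{r1r2}; thus the bracket factors as $-pe^{-z}(e^z - r_1(s))(e^z - r_2(s))$.

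The crucial remaining step is to identify $\widetilde p_{0,0}(s)$. Since $r_1(s) r_2(s) = (1-p)/p$ and $r_1(s) + r_2(s) = (1+s^\alpha)/p$, for $\mathrm{Re}(s) > 0$ one checks $0 < r_2(s) < 1 < r_1(s)$, so $e^z = r_2(s)$ corresponds to a value of $z$ in the analyticity strip of $\widetilde M_{\alpha,1}(z,s)$ and therefore cannot be a pole. Forcing the numerator to vanish at $e^z = r_2(s)$ gives
\[
\widetilde p_{0,0}(s) = \frac{s^{\alpha - 1} r_2(s)}{(1-p)(1 - r_2(s))},
\]
and substituting this back into the numerator produces the factor $s^{\alpha - 1} e^{-z}(e^z - r_2(s))/(1 - r_2(s))$, which cancels the $(e^z - r_2(s))$ in the denominator. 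After cancellation one is left with exactly \eqref{mom}.

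The main technical obstacle is justifying the removable-pole step: one must verify the location of the roots $r_1, r_2$ in $(0,\infty)$ and argue that $\widetilde M_{\alpha,1}(z,s)$ is jointly analytic in a neighborhood of $\{e^z = r_2(s)\} \times \{\mathrm{Re}(s)>0\}$, which relies on uniform in $t$ moment estimates for $L^{(1)}_{\alpha,1}(t)$ on any finite time horizon (so that $M_{\alpha,1}(z,t)$ extends analytically past $z=0$), together with polynomial growth of $M_{\alpha,1}(z,t)$ in $t$ to make the Laplace transform converge. Everything else is a routine manipulation of generating functions and the Caputo Laplace rule.
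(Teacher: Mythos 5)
Your proposal is correct and follows essentially the same route as the paper's Appendix~\ref{sec:B}: multiply the fractional Kolmogorov equations by $e^{zi}$ and sum to get the Caputo ODE for $M_{\alpha,1}$, Laplace-transform using $\mathcal L\{d^\alpha f/dt^\alpha\}=s^\alpha\tilde f-s^{\alpha-1}f(0)$, factor the denominator into $(e^z-r_1(s))(e^z-r_2(s))$, and pin down $\widetilde p_{0,0}(s)$ by forcing the numerator to vanish at the root $r_2(s)$ lying inside the region of analyticity. The only cosmetic difference is that you phrase the last step as a removable-pole argument while the paper invokes Rouché's theorem (following \cite{Cahoy15, Bailey54}) after passing to $u=e^z$; these are the same mechanism, and your honest flagging of the analyticity/growth estimates needed to make it rigorous is the appropriate level of care.
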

	
	Particular uses of this are asymptotics for moments, the variance of the queue length and the probability 
	of seeing an empty queue as $t \to \infty$ via Tauberian theorems. 
	These can be found in Examples \ref{ex:emptyQ1}, \ref{ex:emptyQ2} is Section \ref{sec:as}. 
	
	\begin{remark} In general, the queuing representation \eqref{two queue relation} also applies in this setting. 
	Time-changing the two independent Poisson processes with a common inverse stable subordinator 
	will lead to the same queuing system in distribution. The difference of two Fractional Poisson Processes 
	with this common time change can be found in the literature under the name of a fractional Skellam process of type 2,
	see \cite{booboo}. 
	In this respect, the queue is a fractional Skellam process, partially reflected at 0. \qed
	 \end{remark}
	
	Since this queue is obtained by a direct time change of the Markovian one, it is straight forward 
	to argue that  when $p < 1/2$, as in equation \eqref{eq:equilibriumform}, 
	\[
	\lim_{t \to \infty} p_i(t) = \frac{1-2p}{1-p}\Big(\frac{p}{1-p}\Big)^i.
	\]
	Since the state space for the queue length is infinite and time is delayed due to the time change, 
	it is natural to search for quantitative estimates 
	for the time it takes for the $p_i(t)$ to be near its equilibrium value. 
	For this we will be using the total variation distance.
	The total variation
	norm between two measures, $\mu$ and $\nu$, on a common 
	probability space $\Omega$ with $\sigma-$algebra $\mathcal{F}$ is defined as
		\begin{equation}
    			\|\mu - \nu\|_{TV} = \sup_{A \in \mathcal{F}}|\mu(A) - \nu(A)|.
		\end{equation} 
		
	The support space for the case of the queue length is $\bN_0$ 
	which is a countable space,
	so overall the total variation distance can be written more simply as
	\begin{equation}
  	  \|\mu - \nu\|_{TV} = \frac{1}{2}\sum_{x \in \N_0} |\mu(x) - \nu(x)|.
	\end{equation}

	Irreducible discrete Markov chains that are positive recurrent converge to 
	a unique equilibrium and this is the case with $Q_n$ in \eqref{eq:equilibriumform}. 
	The \emph{$\e$-mixing time} $T^{\e}_{\rm mix}$
	for any chain $X_t$ with initial distribution $\mu_0$ and with a unique equilibrium distribution $\pi$ is defined by 
	\be\label{eq:mixdef}
		T^{\e}_{\rm mix} = \inf\{ t > 0: \| \P_{\mu_0}\{ X_t \in \cdot \} - \pi \|_{TV} \le \e \}
	\ee

	Then we have the following theorem. 
	
	\begin{theorem} \label{thm:Q1mixtime}
		Let $L^{(1)}_{\alpha, \lambda}(t)$ be given by \eqref{eq:q-length-1} and let $\pi$ 
		be the equilibrium distribution given by \eqref{eq:equilibriumform}. 
		Assume that the initial queue length distribution $\mu_0$ is finitely supported. 
		Then there exist positive constants 
		$\e_0, U_{\alpha, \lambda, \mu_0}$   
		such that 
		\[
		T^{\e}_{\rm mix}= \inf\{ t > 0: \| \P_{\mu_0}\{ L^{(1)}_{\alpha, \lambda}(t)  \in \cdot \} - \pi \|_{TV} \le \e \} 
		\le U_{\alpha, \lambda, \mu_0} \e^{1/\alpha}, \quad \text{ for all } \e < \e_0.
		\]
	\end{theorem}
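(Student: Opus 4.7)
The plan is to combine geometric ergodicity of the embedded discrete birth-and-death chain $\{Q_n\}$ with the probability generating function of the fractional Poisson process $N^{(\alpha)}_\lambda(t)$, exploiting the identity $L^{(1)}_{\alpha,\lambda}(t) = Q_{N^{(\alpha)}_\lambda(t)}$ from \eqref{eq:q-length-1}. The TV distance to equilibrium will then split into a ``chain part'' (how well $Q_n$ has mixed after $n$ steps) and a ``clock part'' (how many subordinated steps have accumulated by time $t$).

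First I would establish geometric ergodicity of $\{Q_n\}$ in total variation. Since $p<1/2$, the chain is a positive recurrent birth-and-death chain with explicit stationary distribution $\pi$ given in \eqref{eq:equilibriumform}. Using a Foster--Lyapunov function (a natural choice is $V(k) = ((1-p)/p)^{k/2}$, which is $\pi$-integrable and satisfies a geometric drift condition off $\{0\}$ together with a minorisation at $0$), one obtains constants $\rho = \rho(p) \in (0,1)$ and $C = C(\mu_0, p) > 0$ such that
\[
\bigl\| \P_{\mu_0}\{Q_n \in \cdot\} - \pi \bigr\|_{TV} \le C \rho^n, \qquad n \ge 0,
\]
for every finitely supported $\mu_0$. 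The finite-support assumption on $\mu_0$ is used precisely to guarantee $\mu_0(V) < \infty$, so that the $V$-norm estimate of standard geometric ergodicity descends to a TV bound with an explicit constant.

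Next, conditioning on the counting process and using convexity of $\|\cdot\|_{TV}$ gives
\[
\bigl\| \P_{\mu_0}\{L^{(1)}_{\alpha,\lambda}(t) \in \cdot\} - \pi \bigr\|_{TV}
\le \E\bigl[\bigl\| \P_{\mu_0}\{Q_{N^{(\alpha)}_\lambda(t)} \in \cdot \mid N^{(\alpha)}_\lambda(t)\} - \pi \bigr\|_{TV}\bigr]
\le C\, \E\bigl[\rho^{N^{(\alpha)}_\lambda(t)}\bigr].
\]
The probability generating function of the fractional Poisson process, which can be read off by taking Laplace transforms in the same way as in the derivation of \eqref{fractional kolmogorov eqn}, is
\[
\E\bigl[z^{N^{(\alpha)}_\lambda(t)}\bigr] = E_{\alpha,1}\bigl(-\lambda^\alpha t^\alpha(1-z)\bigr), \qquad |z| \le 1.
\]
Evaluating at $z = \rho \in (0,1)$ and applying the Mittag-Leffler asymptotic \eqref{eq:asym1} yields
\[
\E\bigl[\rho^{N^{(\alpha)}_\lambda(t)}\bigr] \le \frac{K(\alpha,\lambda,\rho)}{t^\alpha}, \qquad t \ge t_0,
\]
for some threshold $t_0$ and finite $K$, using that $1-\rho > 0$.

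Combining these estimates yields $\| \P_{\mu_0}\{L^{(1)}_{\alpha,\lambda}(t) \in \cdot\} - \pi \|_{TV} \le C K\, t^{-\alpha}$ for $t \ge t_0$. Setting the right-hand side equal to $\e$ and inverting produces the claimed $\e$-dependence in the mixing-time bound, with $\e_0$ chosen small enough that the solution $t$ lies past $t_0$ (so the asymptotic applies) and the hidden constant is absorbed into $U_{\alpha,\lambda,\mu_0}$. The main obstacle is the first step: geometric convergence of $\{Q_n\}$ in TV, with a quantitative constant depending only on $\mu_0$, requires a Lyapunov approach rather than a uniform Doeblin minorisation because the state space is countably infinite and the chain is not uniformly ergodic. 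Once that is in place, the passage to continuous time is a short calculation with the known generating function of $N^{(\alpha)}_\lambda$ and the polynomial tail of $E_{\alpha,1}$ at infinity.
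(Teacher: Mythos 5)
Your proposal is correct and follows essentially the same global architecture as the paper's proof: establish geometric ergodicity of the embedded chain $\{Q_n\}$ in total variation, condition on the fractional Poisson clock $N^{(\alpha)}_\lambda(t)$, exploit convexity of $\|\cdot\|_{TV}$ to reduce the problem to evaluating $\E\bigl[\rho^{N^{(\alpha)}_\lambda(t)}\bigr] = E_{\alpha,1}\bigl(-\lambda^\alpha t^\alpha(1-\rho)\bigr)$, and invoke the Mittag-Leffler tail asymptotic \eqref{eq:asym1} to conclude the TV distance is of order $t^{-\alpha}$. The one step where you genuinely diverge is the first: the paper obtains the bound $\|\P_{\mu_0}\{Q_n\in\cdot\}-\pi\|_{TV}\le c\,\theta^n$ by citing Corollary~3.1 of M\o ller~\cite{Moller89} for spatial birth-and-death chains (stated for continuous time, with a remark that the discrete case follows by Poisson concentration), whereas you derive it directly via a Foster--Lyapunov drift function $V(k)=((1-p)/p)^{k/2}$ with a minorisation at the atom $\{0\}$. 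Your choice of $V$ does work: for $k\ge 1$ one checks $(PV)(k)=2\sqrt{p(1-p)}\,V(k)$, and $2\sqrt{p(1-p)}<1$ exactly when $p<1/2$, so the drift rate is explicit; the failure of the drift at $k=0$ is harmless since $\{0\}$ is the small set. Your route is more self-contained and gives an explicit $\rho=2\sqrt{p(1-p)}$, at the cost of invoking the Meyn--Tweedie $V$-geometric ergodicity machinery; the paper's citation is shorter but outsources the key discrete-time estimate. Both uses of the finite-support hypothesis on $\mu_0$ coincide: it guarantees $\mu_0(V)<\infty$ (or, in M\o ller's language, the non-degeneracy condition on the initial law). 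One small remark: both your derivation and the paper's own proof actually conclude $T^\e_{\rm mix}=\mathcal{O}(\e^{-1/\alpha})$, not $\e^{1/\alpha}$ as written in the theorem statement, which appears to be a sign typo in the exponent.
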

	\medskip
	
	\subsection{The renewal queue} For this section we define 
		\be \label{eq:pal}
		p_{\alpha, \beta}^{ \lambda, \mu} = \P\{ X_{\alpha, \lambda} < X_{\beta, \mu} \}.
		\ee
		The random variables $X_{\alpha, \lambda}, X_{\beta, \mu} $ are independent, 
		Mittag-Leffler distributed.
		Consider two independent i.i.d. sequences $\{X^{(i)}_{\alpha, \lambda}\}_{i\in \N}$, 
		$\{X^{(i)}_{\beta, \mu} \}_{i \in \N}$ and define the i.i.d. sequence
			\[ Y_i = X_{\alpha, \lambda} \wedge X_{\beta, \mu}. \]
		The counting process for this model is now a renewal process $R_t$ with inter-event times $Y_i$.	
		A renewal event will be classified as `arrival' with probability  
		$p_{\alpha, \beta}^{ \lambda, \mu}$ and `departure' otherwise. 
		To describe $R_t$ in words, at time $t = 0$ we have two competing Mittag-Leffler distributions 
		and see which one rings first. If $X_{\alpha, \lambda}$ rings first 
		(i.e.  $X_{\alpha, \lambda} <  X_{\beta, \mu})$ then we interpret the event at time $t =Y_1$ 
		as an arrival, otherwise it is a departure attempt. At time $t = Y_1$ the waiting time restarts. 
		
		An interesting aspect for this model is the fact that the competing Mittag-Leffler variables that define
		 $Y_i$ they can allow the inter-event times to have moments. 
		 To see this consider 
		 \[
		 \P\{ Y > t \} = \P\{ X_{\alpha, \lambda} \wedge X_{\beta, \mu} > t \}= \P\{ X_{\alpha, \lambda} > t \}  \P\{ X_{\beta, \mu} > t \}  \sim \frac{C_{\alpha, \beta, \lambda, \mu}}{t^{\alpha+\beta}}, \quad t \to \infty.
		 \]
		Therefore, when $\alpha + \beta > 1$ the inter-event times have moments and the time-changed 
		queue can be treated as with classical renewal queues with a first moment (but not second).
		
		As in Model (1), the queue length is the time-changed queue length 
			\[ L_t^{(2)} = Q_{R_t} \]
		with an arrival event appearing with probability $p_{\alpha, \beta}^{ \lambda, \mu}$.
		
		\begin{proposition} \label{prop:renewal}
			For any choice of parameters $\alpha, \beta \in (0,1)$,
			 there exists a critical  $\rho_{\alpha, \beta}^* \in (0,\infty)$ 
			 such that  the embedded queue length is transient if and only if 
			 $\lambda/\mu > \rho_{\alpha, \beta}^*$. 

			Moreover, if $\lambda/\mu < \rho_{\alpha, \beta}^*$ then the queue length 
			reaches an equilibrium with distribution \eqref{eq:equilibriumform} and 
			parameter $p_{\alpha, \beta}^{\lambda, \mu}$. 

			In the case where $\lambda/\mu = \rho_{\alpha, \beta}^*$, 
			the embedded Markov chain is null recurrent. 
		\end{proposition}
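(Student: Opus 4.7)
The plan is to reduce the classification of the time-changed queue $L_t^{(2)} = Q_{R_t}$ to that of the embedded birth-death chain $\{Q_n\}$, and then track how the arrival probability $p_{\alpha,\beta}^{\lambda,\mu}$ depends on the ratio $\lambda/\mu$. First, using the scaling \eqref{eq:outscale}, I would rewrite
\[
p_{\alpha,\beta}^{\lambda,\mu} = \P\{ X_{\alpha,\lambda} < X_{\beta,\mu} \} = \P\{ \mu X_\alpha < \lambda X_\beta \} = F_Z(\lambda/\mu),
\]
where $Z := X_\alpha/X_\beta$ for independent unit-scale Mittag-Leffler variables $X_\alpha, X_\beta$. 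Since the density of $X_\alpha$ is strictly positive and continuous on $(0,\infty)$, the ratio $Z$ has a continuous, strictly positive density on $(0,\infty)$, so $\rho \mapsto F_Z(\rho)$ is continuous and strictly increasing on $(0,\infty)$ with $F_Z(0+) = 0$ and $F_Z(\infty) = 1$. Setting $\rho_{\alpha,\beta}^* := F_Z^{-1}(1/2) \in (0,\infty)$, it follows that $p_{\alpha,\beta}^{\lambda,\mu}$ is less than, equal to, or greater than $1/2$ precisely when $\lambda/\mu$ is less than, equal to, or greater than $\rho_{\alpha,\beta}^*$.

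Second, the embedded chain $\{Q_n\}$ has the transition probabilities \eqref{Transition probilities} with parameter $p = p_{\alpha,\beta}^{\lambda,\mu}$ and $\beta = 0$. Classical birth-death-chain theory then yields positive recurrence with stationary distribution \eqref{eq:equilibriumform} iff $p < 1/2$, null recurrence iff $p = 1/2$, and transience iff $p > 1/2$, which combined with the previous paragraph gives the claimed trichotomy in terms of $\lambda/\mu$ versus $\rho_{\alpha,\beta}^*$. Third, to lift each regime from $\{Q_n\}$ to $L_t^{(2)}$, observe that $Y_i > 0$ almost surely implies $R_t \to \infty$ a.s., so the events $\{Q_n = 0 \text{ infinitely often in } n\}$ and $\{L_t^{(2)} = 0 \text{ infinitely often in } t\}$ coincide; this gives the recurrence/transience equivalence at the level of the continuous-time process. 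In the positive-recurrent case, the convergence $Q_n \Rightarrow \pi$ together with $R_t \to \infty$ a.s.\ and bounded convergence yields
\[
\lim_{t\to\infty} \P\{ L_t^{(2)} = j \} = \lim_{t\to\infty} \E\bigl[ \P\{ Q_{R_t} = j \mid R_t \} \bigr] = \pi_j,
\]
so $L_t^{(2)}$ inherits the equilibrium \eqref{eq:equilibriumform} with parameter $p_{\alpha,\beta}^{\lambda,\mu}$.

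The main obstacle is the identification of $p_{\alpha,\beta}^{\lambda,\mu}$ as the CDF of $X_\alpha/X_\beta$ evaluated at $\lambda/\mu$; once this representation is in hand, continuity and strict monotonicity in $\lambda/\mu$ become transparent, and the rest is a routine application of birth-death-chain recurrence theory together with the divergence $R_t \to \infty$. Obtaining a closed-form expression for $\rho_{\alpha,\beta}^*$ (beyond its description as the median of $X_\alpha/X_\beta$) appears out of reach in general, but is not required for the statement of the proposition.
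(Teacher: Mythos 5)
Your proposal is correct and follows essentially the same route as the paper: reduce to the single parameter $\rho = \lambda/\mu$, observe that $p_{\alpha,\beta}^{\rho,1}$ is continuous and strictly increasing from $0$ to $1$ in $\rho$ (you phrase this via the CDF of the ratio $X_\alpha/X_\beta$; the paper writes the equivalent integral $\int_0^\infty F_{X_\alpha}(\rho t)f_{X_\beta}(t)\,dt$), define $\rho^*$ as the value where it crosses $1/2$, and invoke the standard birth-death trichotomy for the embedded chain. Your extra paragraph lifting the classification from $\{Q_n\}$ to $L_t^{(2)}$ via $R_t\to\infty$ a.s.\ and bounded convergence is a useful bit of detail the paper leaves implicit, but it does not change the underlying argument.
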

	
	\subsection{ The Mittag-Leffler GI/GI/1 queue}
	
	 In this model, we consider two independent arrival and
	 departure Fractional Poisson processes. The arrival process 
	 $N^{(\alpha_1)}_{\lambda}(t)$ has inter-event times which are Mittag-Leffler distributed with tail parameter 
	 $\alpha_1$ and scaling $\lambda >0$. The departure process  $N^{(\alpha_2)}_{\mu}(t)$ has potentially 
	 different tail exponent $\alpha_2$ and different scaling $\mu$. 
	 
	 A renewal point of the arrival process signifies that a customer 
	 joined the queue while a renewal point on the departure process suggests the completion of a service 
	 and if the queue length is strictly positive at that time it is reduced by 1, otherwise it remains at zero.  
	 
	 We denote the queue length at time $t$ by $L^{\alpha_1, \alpha_2}_{\lambda, \mu}(t)$. If the scalings are 
	 $\lambda = \mu=1$ we omit them from the notation and simply write $L^{\alpha_1, \alpha_2}(t)$ for the length. 
	 
	 In Figure \ref{fig:limits0} we see how the different tail indices can affect the queue length. The three cases show the qualitative differences for the queue length, and all these behaviours are explored in the theorems below in various ways.
	 
	 	 			\begin{figure}[h]
  \includegraphics[width=\linewidth]{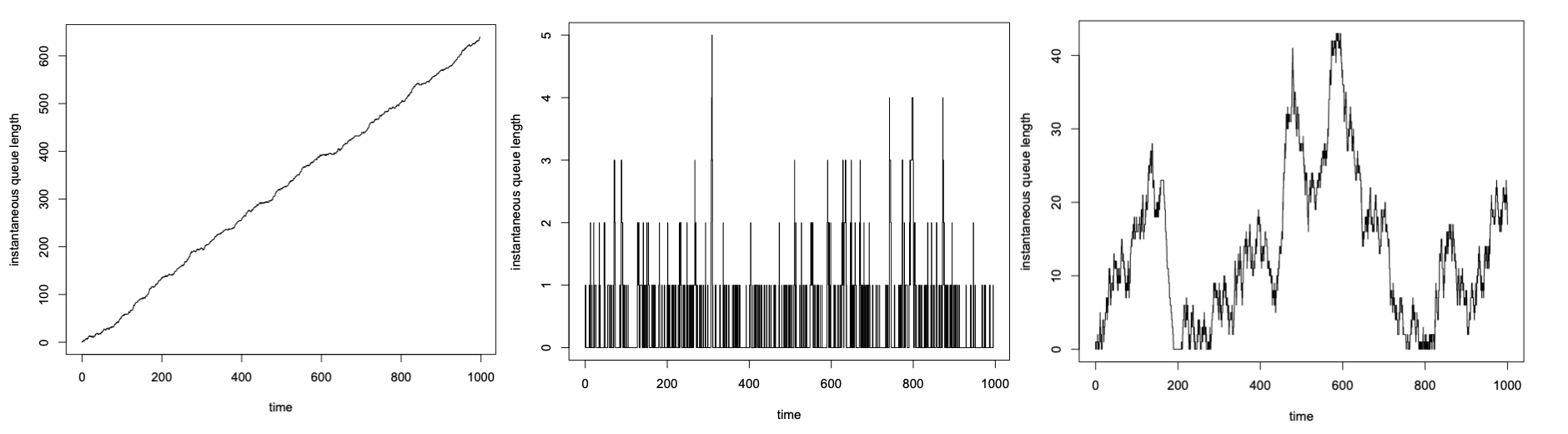}
  \caption{Three simulations for the Mittag-Leffler $GI/GI/1$ queue  for large times.  From left to right are the cases $\alpha_1 > \alpha_2$,  $\alpha_1 < \alpha_2$,  $\alpha_1 = \alpha_2$ respectively.} 
  \label{fig:limits0}
\end{figure}
	 
%	 \begin{theorem}[Scaling limits] 
%	 \label{thm:scalinglimits}
%	 
%	 \end{theorem}

For $\alpha \in (0,1)$ define the $\alpha-$stable subordinator $\left(L_{\alpha}(t)\right)_{t \geq 0}$ to be a positive valued L\'evy process with Laplace transform 
\begin{equation} \label{subordinator defn}
   \bE\left[e^{-\omega L_{\alpha}(t)}\right] = e^{-t \omega^{\alpha}}.
\end{equation}
From this, we can naturally define the inverse $\alpha-$stable subordinator $\left(Y_{\alpha}(t)\right)_{t \geq 0}$ to be given by
\begin{equation} \label{inverse subordinator defn}
    Y_{\alpha}(t) = \inf\{u \geq 0: L_{\alpha}(u) > t\}.
\end{equation}
This inverse $\alpha-$stable subordinator can be used to define the fractional Poisson process as a time change of a Poisson process $N_{\lambda^\alpha}(t)$, by setting 
\begin{equation}
    N_{\lambda}^{(\alpha)}(t) := N_{\lambda^{\alpha}}(Y_{\alpha}(t)).
\end{equation}
One important property to note is that both $L_{\alpha}$ and $Y_{\alpha}$ are self-similar; i.e,
\begin{equation}
    L_{\alpha}(t) \overset{d}{=} t^{1/\alpha}L_{\alpha}(1), \hspace{15.pt} Y_{\alpha} \overset{d}{=} t^{\alpha}Y_{\alpha}(1).
\end{equation}
%This self-similarity property naturally carries over to the fractional Poisson process, where we have %\note{the below doesn't seem right with the current way we are using the scaling}
This inverse $\alpha-$stable subordinator can be used to define the fractional Poisson process as a time change of a Poisson process $N_{\lambda^\alpha}(t)$, as was proved in \cite{Meerschaert11}, by setting 
\begin{equation} \label{FTPP-FPP relation}
    N_{\lambda}^{(\alpha)}(t) := N_{\lambda^{\alpha}}(Y_{\alpha}(t)).
\end{equation}
One important property to note is that both $L_{\alpha}$ and $Y_{\alpha}$ are self-similar; i.e,
\begin{equation} \label{subordinator self-similarity property}
    L_{\alpha}(t) \overset{d}{=} t^{1/\alpha}L_{\alpha}(1), \hspace{15.pt} Y_{\alpha} \overset{d}{=} t^{\alpha}Y_{\alpha}(1).
\end{equation}
This self-similarity property naturally carries over to the fractional Poisson process, where we have 
\begin{align} \label{FPP self-similarity property}
    N_{\lambda}^{(\alpha)}(t) &\overset{d}{=} N_{\lambda^{\alpha}}(Y_{\alpha}(t)) \overset{d}{=} N_1(\lambda^{\alpha} Y_{\alpha}(t)) \overset{d}{=} N_1(Y_{\alpha}(\lambda t)) \overset{d}{=} N_{1}^{(\alpha)}(\lambda t)). 
\end{align}
\begin{align}
    N_{\lambda}^{(\alpha)}(t) &= N_{\lambda^{\alpha}}(Y_{\alpha}(t)) \overset{d}{=} N_1(\lambda^{\alpha} Y_{\alpha}(t)) \overset{d}{=} N_1(Y_{\alpha}(\lambda t)) = N_{1}^{(\alpha)}(\lambda t)). 
\end{align}

We re-write equation \eqref{two queue relation} with this notation. 
\begin{equation} \label{eq:2qnew}
    \left\{L^{\alpha_1, \alpha_2}_{\lambda, \mu}(t)\right\}_{t \geq 0} = \left\{\left(N^{(\alpha_1)}_{\lambda}(t) - N^{(\alpha_2)}_{\mu}(t)\right) - \inf_{0 \leq s \leq t}\left(N^{(\alpha_1)}_{\lambda}(s) - N^{(\alpha_2)}_{\mu}(s)\right)\right\}_{t \geq 0},  
\end{equation}
with $\alpha_1, \alpha_2 \in (0,1)$ and $\lambda, \mu > 0$.
The first theorem in this section concerns the scaling limit of the queue length. 
	 
\begin{theorem}[Scaling limits]  \label{thm: functional limit}
 Let $Y_{\alpha}(t), \tilde{Y}_{\alpha}(t)$ be two independent copies of the inverse $\alpha-$stable subordinator defined in \eqref{inverse subordinator defn}. Let $\gamma = \max\{\alpha_1, \alpha_2\} \in (0,1)$, then we have the following weak convergence as $t \to \infty$ with respect to the Skorohod $J_1$ topology:
\begin{align}
    &\Big\{\frac{L^{\alpha_1, \alpha_2}_{\lambda, \mu}(t\tau)}{t^{\gamma}}\Big\}_{\tau \geq 0} \overset{(w)}{\longrightarrow} \notag \\
    &\qquad\quad\begin{cases}
    \left\{\lambda^{\alpha_1} Y_{\gamma}(\tau)\right\}_{\tau \geq 0} &\text{ if } \alpha_1 > \alpha_2, \\
 \left\{0\right\}_{\tau \geq 0} &\text{ if } \alpha_1 < \alpha_2, \\
 \Big\{\lambda^{\alpha} Y_{\gamma}(\tau) - \mu^{\alpha} \tilde{Y}_{\gamma}(\tau) - \underset{0 \leq s \leq \tau}{\inf}\left(\lambda^{\alpha} Y_{\gamma}(\tau) - \mu^{\alpha} \tilde{Y}_{\gamma}(\tau)\right)\Big\}_{\tau \geq 0}  &\textrm{ if } \alpha_1 = \alpha_2 := \alpha.
 \end{cases}
 \label{eq:lims}
\end{align}
\end{theorem}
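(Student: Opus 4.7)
The plan is to reduce the scaling limit to a joint functional limit theorem for the two independent fractional Poisson processes and then apply the continuous mapping theorem to the reflection map built into \eqref{eq:2qnew}. Because every inverse stable subordinator $Y_\gamma$ has almost surely continuous paths, the candidate limits in \eqref{eq:lims} are continuous; on that event $J_1$-convergence is equivalent to uniform convergence on compact time intervals, which eliminates the usual headache caused by common discontinuities.

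First I would establish the joint weak convergence
\[
\Big(\tfrac{N^{(\alpha_1)}_\lambda(t\,\cdot)}{t^{\alpha_1}},\,\tfrac{N^{(\alpha_2)}_\mu(t\,\cdot)}{t^{\alpha_2}}\Big) \;\overset{(w)}{\longrightarrow}\; \bigl(\lambda^{\alpha_1} Y_{\alpha_1}(\cdot),\;\mu^{\alpha_2}\tilde Y_{\alpha_2}(\cdot)\bigr)
\]
in $D([0,\infty),\mathbb{R}^2)$ under $J_1$. Each marginal is the standard fractional Poisson process scaling limit, which after the self-similarity identity \eqref{FPP self-similarity property} produces the prefactors $\lambda^{\alpha_1}$ and $\mu^{\alpha_2}$. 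Joint convergence then follows from independence of the two processes via marginal tightness and factoring of finite-dimensional distributions.

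Next I would rewrite the queue-length representation \eqref{eq:2qnew} as $L^{\alpha_1,\alpha_2}_{\lambda,\mu}(t\tau)/t^\gamma = D_t(\tau) - \inf_{0\le s\le\tau} D_t(s)$, where $D_t(\tau) := t^{-\gamma}N^{(\alpha_1)}_\lambda(t\tau) - t^{-\gamma}N^{(\alpha_2)}_\mu(t\tau)$. For each $i$ with $\alpha_i < \gamma$, the corresponding summand equals $t^{\alpha_i-\gamma}$ times a $J_1$-convergent sequence whose limit is continuous, so it vanishes uniformly on compacts; the surviving summand converges by the previous step. Case by case this yields
\[
D_t \;\Rightarrow\; \begin{cases} \lambda^{\alpha_1} Y_\gamma & \text{if } \alpha_1 > \alpha_2, \\ -\mu^{\alpha_2} \tilde Y_\gamma & \text{if } \alpha_1 < \alpha_2, \\ \lambda^\alpha Y_\gamma - \mu^\alpha \tilde Y_\gamma & \text{if } \alpha_1 = \alpha_2 = \alpha, \end{cases}
\]
each of which is a continuous path. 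Applying the continuous mapping theorem to the reflection functional $\Phi(f)(\tau) := f(\tau) - \inf_{0\le s\le\tau} f(s)$, which is continuous on $D[0,\infty)$ at every continuous argument, delivers the final weak limit. In the first case $D_\infty$ is nondecreasing from $0$, so $\Phi(D_\infty) = D_\infty$; in the second it is nonincreasing from $0$, so $\inf_{[0,\tau]} D_\infty = D_\infty(\tau)$ and $\Phi(D_\infty) \equiv 0$; the equal case recovers the displayed expression in \eqref{eq:lims}.

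The principal technical point is the joint $J_1$ convergence in the first step: individual convergence of fractional Poisson processes is classical, but on a product Skorokhod space one generally needs the limit marginals to lack common jump times. Independence of the two processes together with the sample-path continuity of the limit subordinators $Y_{\alpha_1}$ and $\tilde Y_{\alpha_2}$ dispatches this concern. Everything downstream is a routine application of continuous mapping, together with the observation that all limit candidates are continuous, so the $J_1$ topology on the input can be pushed through to the output without further effort.
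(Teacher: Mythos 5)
Your proposal is correct and follows essentially the same route as the paper: marginal scaling limits for each fractional Poisson process (with the lower-index process vanishing after normalisation by $t^{\gamma}$), joint convergence via independence and the a.s.\ continuity of the limiting inverse stable subordinators (which removes any common-discontinuity obstruction), then continuity of subtraction and of the reflection map $f \mapsto f - \inf_{[0,\cdot]} f$ on the Skorohod space, and finally the same simplification of the infimum in the monotone cases. The only cosmetic difference is that you normalise each process by $t^{\alpha_i}$ before rescaling by $t^{\alpha_i - \gamma}$, whereas the paper normalises both directly by $t^{\gamma}$ and cites Whitt's theorems for the continuity of addition and reflection.
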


\begin{figure}[h]
	\includegraphics[height=5cm]{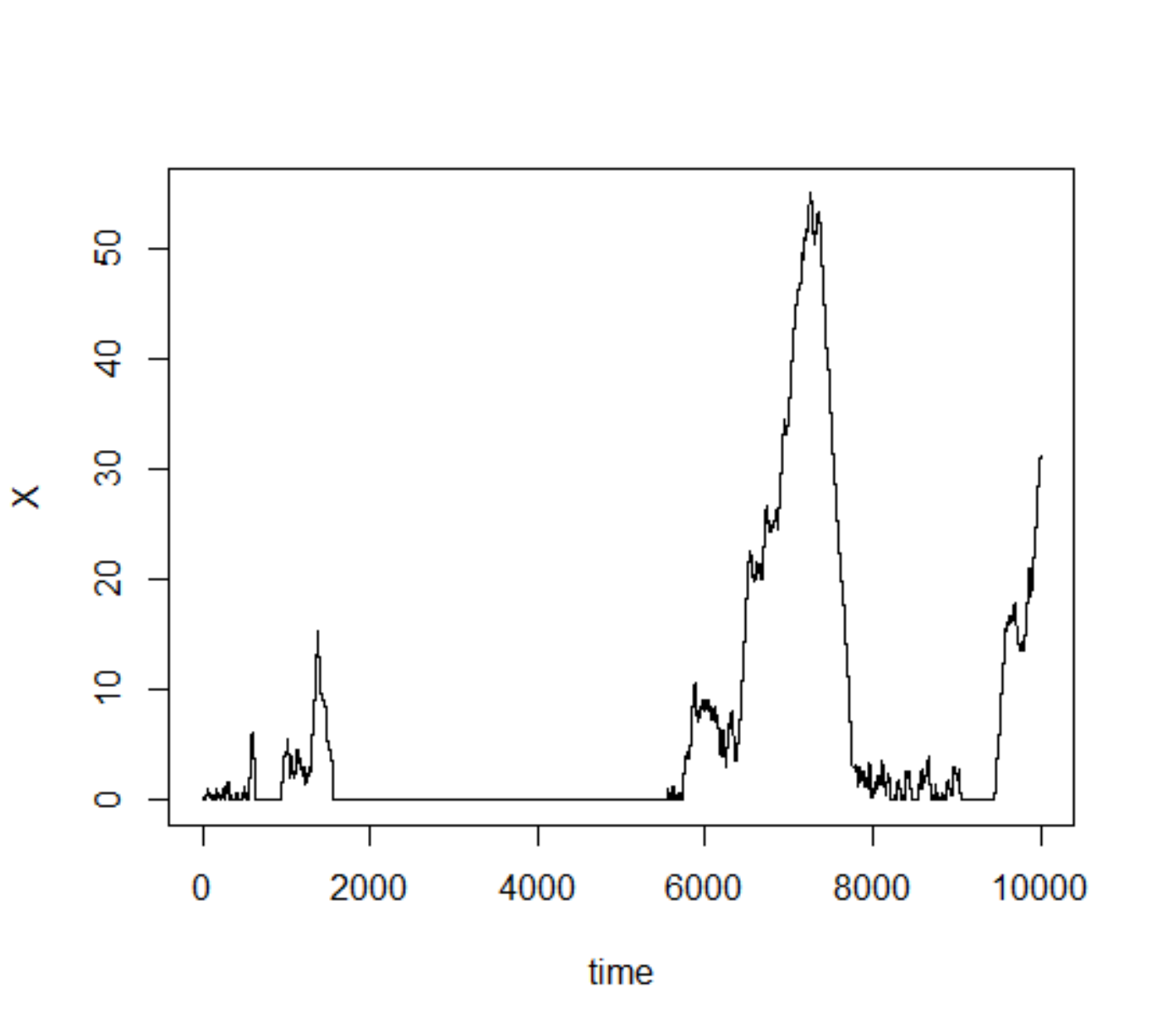}
		%\caption{$\lambda = \mu = 1$}}
		%\vspace{1cm}
   \includegraphics[height=5cm]{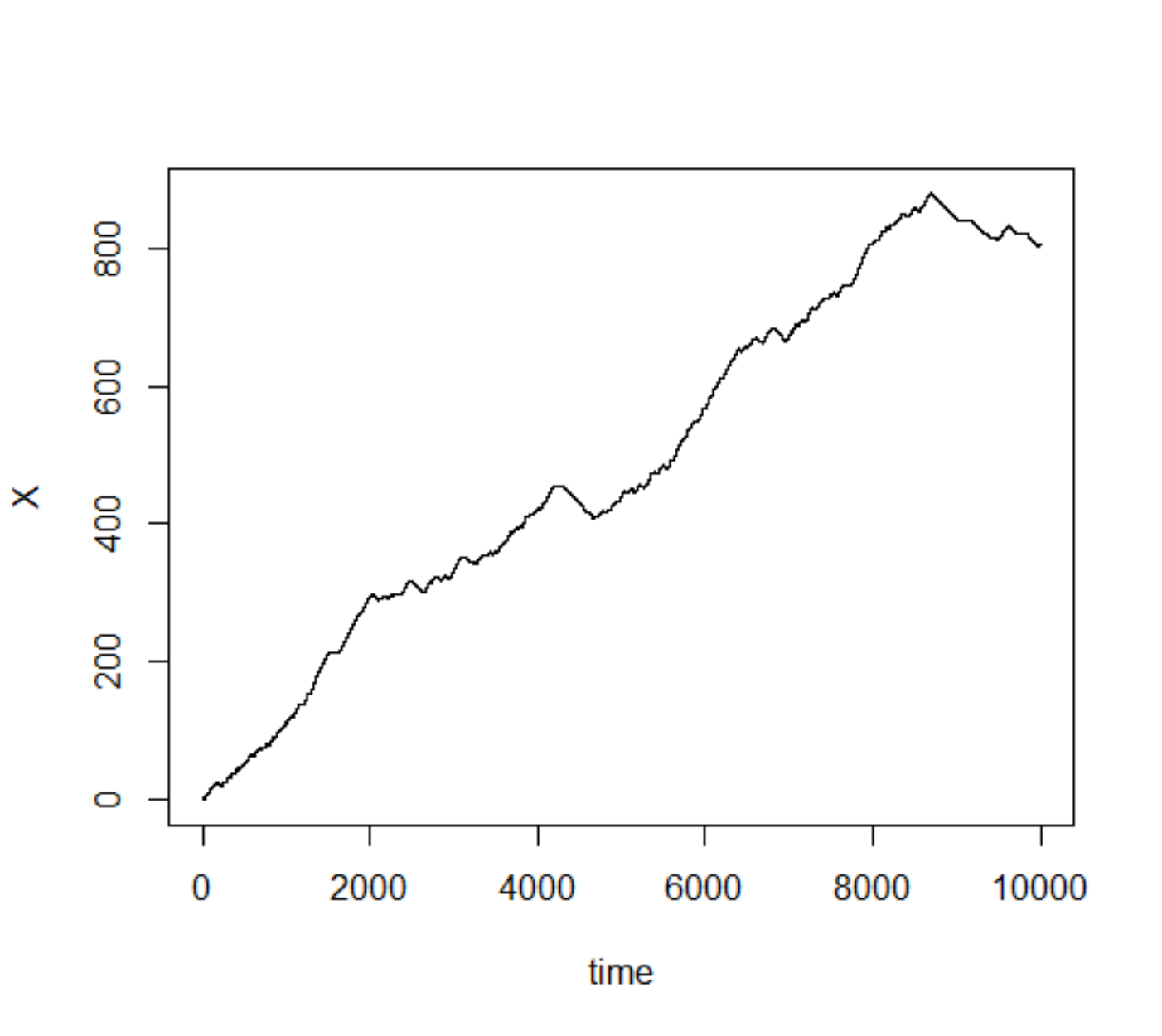}
		%\caption{$\lambda = 2,\mu = 1$}}
		%\vspace{1cm}
	\includegraphics[height=5cm]{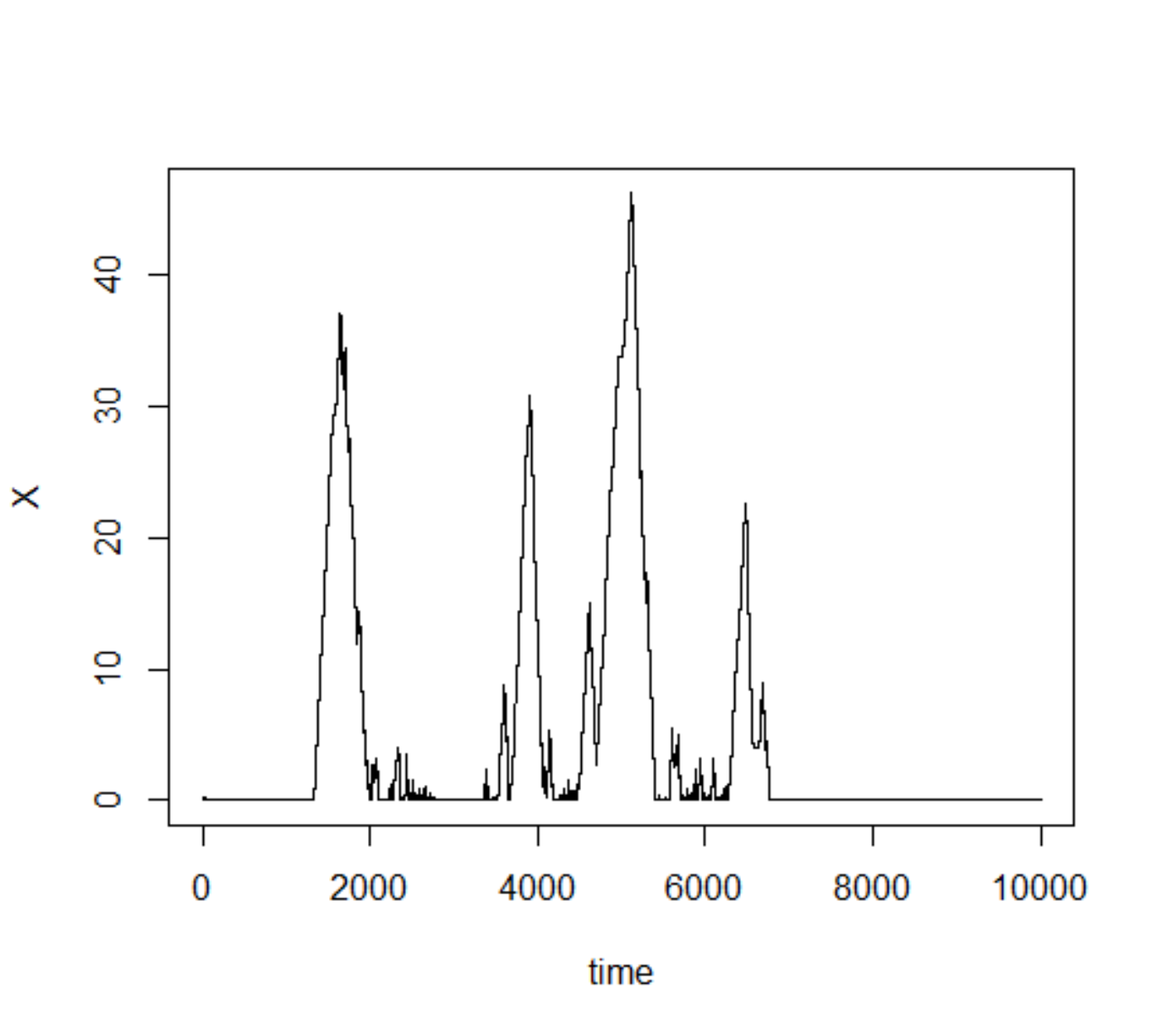}
		%\caption{$\lambda = 1,\mu = 2$}}		
    \caption{Three simulated realisations for the limiting process  given in the last line of equation \eqref{eq:lims} for the case where $\alpha_1 = \alpha_2 = 0.9$. These figures come directly from the simulations of the inverse alpha-stable subordinator using the CMS formula \cite{Chambers76}. From left to right we have the behaviours for (left) $\lambda = \mu=1$, (middle) $\lambda = 2, \mu =1$ and  (right) $\lambda = 1, \mu =2$.}
    \label{fig:functional limit}
\end{figure}
	 	
		Theorem \ref{thm: functional limit} already suggest the potential behaviour of the queue length in terms of recurrence and transience. Recurrence in this case would mean that the queue length would be zero infinitely often while transience means that the queue length will diverge to infinity as time goes by.  Since the subordinator is non-negative and increasing, we expect that when $\alpha_1 > \alpha_2$ the queue length should divert to infinity. Similarly, when $\alpha_1 < \alpha_2$ and the limit is 0, we expect the queue length to not grow by much (though at this point the theorem only guarantees that it grows less than $t^{\gamma}$). See Figure \ref{fig:limits0} for the corresponding behaviours when the queue length is unscaled. 
		
		Finally, when the tail indices are the same the behaviour should be guided by the scalings $\lambda$ and $\mu$ so the situation is slightly more delicate. This can also be seen in the simulations of Figure \ref{fig:functional limit}. 
		
		The two following theorems explore these various transience and recurrence behaviour.  
		
	\begin{theorem}\label{thm:Rec+Trans} Let $L^{\alpha_1, \alpha_2}_{\lambda, \mu}$ denote the Mittag-Leffler $\rm GI/GI/1$ queue length, as defined in equation \eqref{eq:2qnew}. Then, 
		\begin{enumerate}
			\item If $\alpha_1 < \alpha_2$ then 
			\[
			\P\{ L^{\alpha_1, \alpha_2}_{\lambda, \mu}(t) = 0 \text{ i.o.} \} =1.
			\]
			\item If $\alpha_1 > \alpha_2$ then 
			\[
			\P\{ \varlimsup_{t\to \infty} L^{\alpha_1, \alpha_2}_{\lambda, \mu}(t) = \infty \} =1.
			\]
		\end{enumerate}	
	\end{theorem}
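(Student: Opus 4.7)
The plan is to reduce both statements to the long-run behaviour of the unreflected difference
\[
X(t) := N^{(\alpha_1)}_{\lambda}(t) - N^{(\alpha_2)}_{\mu}(t),
\]
and then to control $X(t)$ almost surely via the subordinator representation \eqref{FTPP-FPP relation}. From \eqref{eq:2qnew} we have
$L^{\alpha_1,\alpha_2}_{\lambda,\mu}(t) = X(t) - \inf_{0\le s\le t} X(s)$. Because $X$ is cadlag, integer-valued, with jumps of size $\pm 1$, every time $X$ records a new running minimum it sits on that minimum, so
\[
\{\,L^{\alpha_1,\alpha_2}_{\lambda,\mu}(t)=0 \text{ i.o.}\,\} = \{\,\liminf_{t\to\infty} X(t) = -\infty\,\}.
\]
Conversely, $\inf_{s\le t} X(s)\le X(0)=0$ implies $L^{\alpha_1,\alpha_2}_{\lambda,\mu}(t)\ge X(t)$, so
$\{\,\limsup_{t\to\infty} X(t)=+\infty\,\}\subseteq\{\,\limsup_{t\to\infty} L^{\alpha_1,\alpha_2}_{\lambda,\mu}(t)=\infty\,\}$.
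So (1) and (2) follow if I prove $X(t)\to -\infty$ a.s.\ when $\alpha_1<\alpha_2$ and $X(t)\to +\infty$ a.s.\ when $\alpha_1>\alpha_2$.

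Next I would establish an almost-sure rate for the fractional Poisson process. By \eqref{FTPP-FPP relation}, $N^{(\alpha)}_{\lambda}(t)=N_{\lambda^\alpha}(Y_\alpha(t))$ with the Poisson process $N_{\lambda^\alpha}$ independent of the inverse stable subordinator $Y_\alpha$. Since $L_\alpha$ is a.s.\ finite and increasing, $Y_\alpha(t)\to\infty$ a.s., and the classical SLLN for the Poisson process applied along this (random, increasing) sequence yields
\[
\frac{N^{(\alpha)}_{\lambda}(t)}{Y_\alpha(t)} \;\longrightarrow\; \lambda^\alpha \quad \text{a.s.}
\]

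The heart of the argument is a sharp a.s.\ polynomial envelope for $Y_\alpha$. From self-similarity $L_\alpha(u)\stackrel{d}{=}u^{1/\alpha}L_\alpha(1)$, the stable tail $\P(L_\alpha(1)>x)\sim C x^{-\alpha}$, and classical exponential decay of $\P(L_\alpha(1)<\e)$ as $\e\downarrow 0$ (from the density of $L_\alpha(1)$ near the origin), the Borel--Cantelli lemma along the dyadic sequence $u_n=2^n$, combined with the monotonicity of $L_\alpha$, gives for every $\delta>0$,
\[
u^{1/\alpha-\delta} \;\le\; L_\alpha(u) \;\le\; u^{1/\alpha+\delta} \quad \text{eventually, a.s.}
\]
Inverting via the defining relation \eqref{inverse subordinator defn} then produces the envelope
\begin{equation}\label{eq:Yenv}
t^{\alpha-\delta} \;\le\; Y_\alpha(t) \;\le\; t^{\alpha+\delta} \quad \text{eventually, a.s.}
\end{equation}

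To finish, combine the two ingredients. The fractional Poisson processes $N^{(\alpha_1)}_\lambda$ and $N^{(\alpha_2)}_\mu$ are independent, so we may represent them through two independent inverse subordinators $Y_{\alpha_1}$ and $\tilde Y_{\alpha_2}$. If $\alpha_1<\alpha_2$, pick $\delta>0$ with $\alpha_1+\delta<\alpha_2-\delta$. Then by the SLLN and \eqref{eq:Yenv}, almost surely for all large $t$,
\[
X(t) \;\le\; 2\lambda^{\alpha_1}\,Y_{\alpha_1}(t) \;-\; \tfrac{1}{2}\mu^{\alpha_2}\,\tilde Y_{\alpha_2}(t) \;\le\; 2\lambda^{\alpha_1} t^{\alpha_1+\delta} - \tfrac{1}{2}\mu^{\alpha_2} t^{\alpha_2-\delta} \;\longrightarrow\; -\infty,
\]
proving (1). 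The case $\alpha_1>\alpha_2$ is symmetric and yields $X(t)\to+\infty$ a.s., proving (2). The main technical obstacle is the a.s.\ lower bound for $L_\alpha$ in \eqref{eq:Yenv}; the upper bound follows directly from the stable tail, whereas the lower bound needs the rapid decay of the small-ball probability $\P(L_\alpha(1)<\e)$, which is a classical by-product of the Mittag-Leffler-type expansion of the stable density near zero.
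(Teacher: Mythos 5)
Your proof is correct but takes a genuinely different route from the paper's. The paper works in probability: it Chernoff-bounds the moment generating function of the fractional Poisson difference to show, for instance, that $\P\bigl\{\inf_{0\le s\le t}(N^{(\alpha_1)}(s)-N^{(\alpha_2)}(s))>-t^{\alpha_1/2}\bigr\}\le C\,t^{-(\alpha_2-\alpha_1)}$, and then exploits the elementary fact that $\P(\mathcal A_n)\to 1$ forces $\P(\mathcal A_n\ \text{i.o.})=1$ regardless of whether the complementary probabilities are summable; recurrence of zero is then extracted via Lemma~\ref{lem:useful}. You instead prove the strictly stronger almost-sure statement $X(t)\to\mp\infty$ by combining (a) the SLLN for the outer Poisson process evaluated along the random time $Y_\alpha(t)\to\infty$, with (b) an a.s.\ polynomial envelope $t^{\alpha-\delta}\le Y_\alpha(t)\le t^{\alpha+\delta}$, obtained through Borel--Cantelli on dyadic scales plus monotonic interpolation, where the upper half uses the stable tail and the lower half the stretched-exponential small-ball probability of $L_\alpha(1)$ near zero. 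Your route buys a cleaner conceptual picture (the random clock $Y_\alpha$ grows like $t^\alpha$, so the slower-clock process always loses) and a sharper conclusion: in case (2) you actually get $L(t)\to\infty$ a.s., not merely $\varlimsup L=\infty$. The paper's route is lighter on prerequisites, reusing only the Mittag-Leffler asymptotic \eqref{eq:asym1} already in play and avoiding any small-ball estimates for stable densities; it also stays closer to the in-probability tools used throughout the rest of Section~\ref{sec:model3}. One small caveat in your write-up: the asserted set equality $\{L=0\ \text{i.o.}\}=\{\liminf_{t}X(t)=-\infty\}$ is not true as an equality (the queue can return to zero infinitely often while the running infimum stabilises at a finite level), but you only ever use the inclusion $\{\liminf X=-\infty\}\subseteq\{L=0\ \text{i.o.}\}$, which is valid and is exactly the content of Lemma~\ref{lem:useful} in the paper; phrasing it as an inclusion would remove the blemish without changing the argument.
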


				\begin{figure}[h]
  \includegraphics[width=\linewidth]{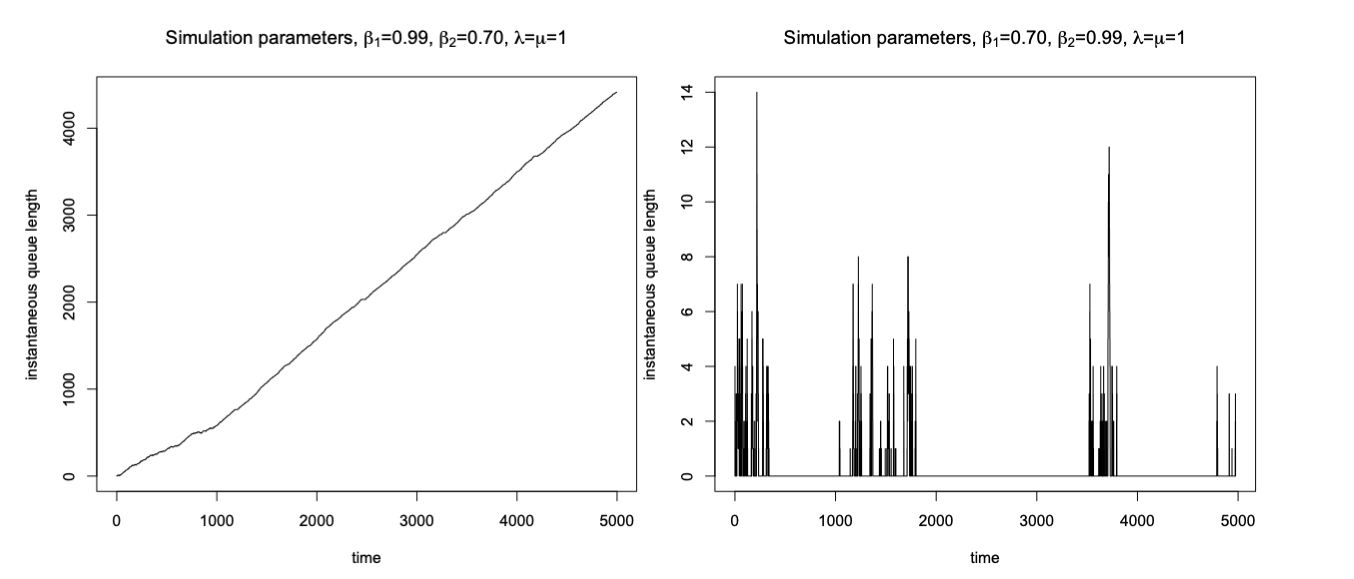}
  \caption{Two simulations for the Mittag-Leffler $\rm{ GI/GI/1}$ queue which highlight their recurrence and transience properties. The arrival and departure counting process have different tail index $\alpha$. These are described in Theorem \ref{thm:Rec+Trans}.}
  \label{fig:powers}
\end{figure}
			
	\begin{theorem}\label{thm:Rec+Trans=Balanced} Let $L^{\alpha_1, \alpha_2}_{\lambda, \mu}$ denote the Mittag-Leffler $\rm GI/GI/1$ queue length, as defined in equation \eqref{eq:2qnew}. Then,
		\begin{enumerate}
			\item If $\alpha_1 = \alpha_2 = \alpha$ and $\mu \ge \lambda$, 
			then 
			\[
			\P\{ L^{\alpha, \alpha}_{\lambda, \mu}(t) = 0 \text{ i.o. } \} =1.
			\]
			\item  If $\alpha_1 = \alpha_2 = \alpha$ and $\mu \le \lambda$, then
			we can find a positive constant $c_{\mu, \lambda}$ so that 
			\[
			\P\Big\{ \varlimsup_{t \to \infty} L^{\alpha, \alpha}_{\lambda, \mu}(t) = \infty \Big\} 
			> c_{\lambda, \mu}.
			\]
%			\item  If $\alpha_1 = \alpha_2 = \alpha$ and $\mu = \lambda$, then
%			we can find a positive constant $c_{\mu, \lambda}$ so that 
%			\[
%%			\P\Big\{ \varliminf_{t \to \infty} L^{\alpha, \alpha}_{\lambda, \lambda}(t) = 0 \Big\} 
%%			> c_{\mu, \lambda}
%%			\text{ and \,\,} 
%			\P\Big\{ \varlimsup_{t \to \infty} L^{\alpha, \alpha}_{\lambda, \lambda}(t) = \infty \Big\} 
%			> c_{\mu, \lambda} .
%			\]

		\end{enumerate}
	\end{theorem}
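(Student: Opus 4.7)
The Mittag-Leffler $\rm GI/GI/1$ queue has independent i.i.d.~Mittag-Leffler inter-arrival times $a_n \sim X_{\alpha,\lambda}$ and service times $s_n \sim X_{\alpha,\mu}$, so it falls under the classical $\rm GI/GI/1$ framework. With $X_n = s_n - a_{n+1}$ and $\zeta_n = X_1+\cdots+X_n$, Lindley's recursion $W_{n+1}=(W_n+X_n)^+$ combined with the Skorokhod identity gives $W_n = \zeta_n - \min_{0\le k\le n}\zeta_k$, so $W_n=0$ precisely when $\zeta_n$ attains its running minimum. My plan is to reduce both parts to fluctuation statements about the random walk $\zeta$.

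\textbf{Tail structure.} Using the Mittag-Leffler tail asymptotic \eqref{eq:asym1} and the standard fact that heavy-tailed variables preserve their tail under subtraction of an independent variable,
\[
\P(X_n > x) \sim K_\alpha \mu^{-\alpha} x^{-\alpha},\qquad \P(X_n < -x) \sim K_\alpha \lambda^{-\alpha} x^{-\alpha},\qquad x\to\infty,
\]
with $K_\alpha = \sin(\alpha\pi)\Gamma(\alpha)/\pi$. Thus $X_n$ lies in the domain of attraction of a non-degenerate $\alpha$-stable law of skewness $\beta = (\mu^{-\alpha}-\lambda^{-\alpha})/(\mu^{-\alpha}+\lambda^{-\alpha})$ with $|\beta|<1$ strictly, since $\lambda,\mu \in (0,\infty)$.

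\textbf{Part 1.} When $\mu\ge\lambda$ we have $\beta\le 0$ and $|\beta|<1$. By classical fluctuation theorems for random walks attracted to a non-degenerate $\alpha$-stable law (e.g.~the oscillation criteria of Erickson and Rogozin), $\liminf_n \zeta_n = -\infty$ almost surely. The Skorokhod identity immediately yields $W_n=0$ infinitely often a.s.; because the arrival epochs $T_n = a_1+\cdots+a_n$ tend to $+\infty$ a.s., $L^{\alpha,\alpha}_{\lambda,\mu}(t)=0$ at arbitrarily large times $t$, which gives $\P\{L^{\alpha,\alpha}_{\lambda,\mu}(t)=0 \text{ i.o.}\}=1$.

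\textbf{Part 2.} For $\mu\le\lambda$, I would use the functional limit Theorem \ref{thm: functional limit} in the balanced regime. The limit process
\[
Z(\tau) = \lambda^\alpha Y_\alpha(\tau) - \mu^\alpha \tilde Y_\alpha(\tau) - \inf_{0\le s\le\tau}\bigl(\lambda^\alpha Y_\alpha(s) - \mu^\alpha \tilde Y_\alpha(s)\bigr)
\]
has almost surely continuous paths because $Y_\alpha,\tilde Y_\alpha$ are continuous and nondecreasing. When $\lambda\ge\mu$, the non-degeneracy of the two independent subordinators yields $\P(\sup_{\tau\in[0,1]} Z(\tau) > c) =: p>0$ for some $c>0$. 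Continuity of the sup functional at continuous paths in $D([0,1])$, combined with the $J_1$ convergence, transfers this to the pre-limit: $\liminf_{t\to\infty}\P(\sup_{s\le t} L^{\alpha,\alpha}_{\lambda,\mu}(s) > ct^\alpha) \ge p$. Fatou applied to the indicators along $t_k=2^k$ gives $\P(\sup_{s\le t_k} L^{\alpha,\alpha}_{\lambda,\mu}(s)>c\,t_k^\alpha \text{ i.o.})\ge p$, and on this event $\sup_s L^{\alpha,\alpha}_{\lambda,\mu}(s)=\infty$. Since $L^{\alpha,\alpha}_{\lambda,\mu}$ is integer-valued and a.s.~finite at every fixed time, the supremum can only be infinite if it is achieved at times going to infinity, which gives $\varlimsup_{t\to\infty} L^{\alpha,\alpha}_{\lambda,\mu}(t)=\infty$ on this event, yielding the stated bound with $c_{\mu,\lambda}=p$.

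\textbf{Main obstacle.} The critical step is invoking the heavy-tail oscillation result for $\zeta_n$ together with the Skorokhod/Lindley reformulation of queue recurrence in Part 1; both ingredients are classical but their combined use in the no-moment regime requires careful bookkeeping of the skewness parameter $\beta$ and its strict interiority $|\beta|<1$. A secondary subtlety for Part 2 is verifying that the sup functional is continuous at the limit sample paths, which rests on the almost-sure path continuity of the inverse $\alpha$-stable subordinators $Y_\alpha$ and $\tilde Y_\alpha$.
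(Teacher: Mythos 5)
There is a genuine gap in Part~1, and it comes from a model misidentification. The queue defined by equation \eqref{eq:2qnew} is driven by two \emph{independent autonomous renewal counting processes} $N^{(\alpha)}_\lambda$ and $N^{(\alpha)}_\mu$: a tick of the departure process removes a customer if the queue is nonempty, and otherwise is wasted. This is not the classical $\rm GI/GI/1$ queue in which each customer receives a \emph{fresh} service time drawn at the moment service begins; the two dynamics coincide only in the memoryless (exponential) case. Consequently the Lindley recursion $W_{n+1}=(W_n+s_n-a_{n+1})^+$ and the identity $W_n=\zeta_n-\min_{k\le n}\zeta_k$ describe a \emph{different} process than $L^{\alpha,\alpha}_{\lambda,\mu}(t)$, and the implication ``$W_n=0$ i.o.\ $\Rightarrow L(T_n^-)=0$ i.o.'' reasons about that other queue. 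The paper's own Appendix~A derivation of \eqref{two queue relation} makes the autonomous-departures interpretation explicit. The embedded object that does govern the paper's model is the random walk $S_n=\sum_i(X_i-Y_i)$ of inter-event-time differences (Lemma~\ref{lem:infio}), and your Erickson--Rogozin input is in fact a useful upgrade there (it covers all $\lambda,\mu>0$, whereas Feller's symmetry criterion in the paper only covers $\lambda=\mu$ directly). But oscillation of $S_n$ alone does not immediately force the reflected difference $D(t)=N^{(\alpha)}_\lambda(t)-N^{(\alpha)}_\mu(t)$ to hit its running infimum infinitely often; the paper needs the bookkeeping in Lemma~\ref{lem:infio}(2) together with the coupling/regeneration in Lemma~\ref{lem:coupling} to turn a sign change of $S_n$ into an emptying of the queue. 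That step is missing here because it would be automatic in the Lindley picture you (incorrectly) invoked.

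Part~2, by contrast, is correct and applies to the right model (Theorem~\ref{thm: functional limit} is stated for the process in \eqref{eq:2qnew}). It is also a genuinely different and somewhat cleaner route than the paper's Chernoff-bound argument: continuity of the $\sup$-functional at continuous paths, the a.s.\ continuity of the inverse stable subordinators $Y_\alpha,\tilde Y_\alpha$, and reverse Fatou along a geometric sequence give the positive lower bound directly from the reflected subordinator-difference limit. One small observation: your argument never uses $\lambda\ge\mu$ (non-degeneracy of the limiting sup holds for all $\lambda,\mu>0$), which is consistent with Part~2 only claiming a positive-probability conclusion; you might note this explicitly rather than conditioning the step on $\lambda\ge\mu$.
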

	
	\begin{figure}[h]
  \includegraphics[width=\linewidth]{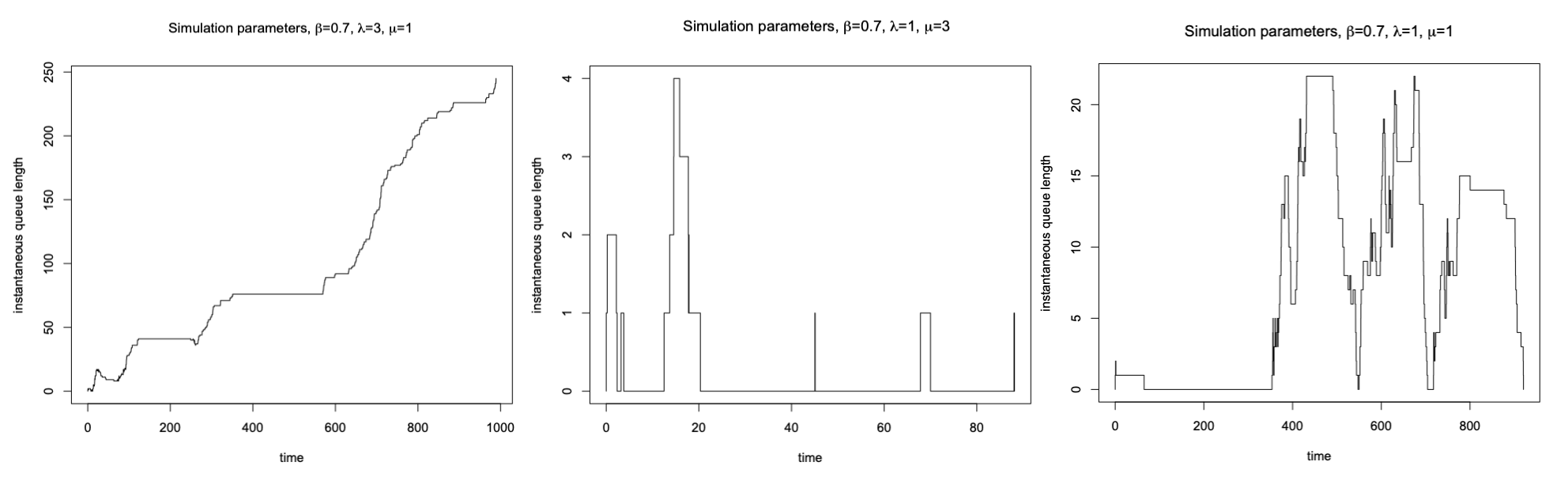}
  \caption{Three simulations for the Mittag-Leffler GI/GI/1 queue. The arrival and departure counting process have the same tail index $\alpha=0.7$ but the arrival rates differ. This is the situation of Theorem \ref{thm:Rec+Trans=Balanced}.}
  \label{fig:queues}
\end{figure}
\begin{remark}
Note that in Figure \ref{fig:queues} it seems that when $\lambda > \mu$ we should be able to substitute $\varlimsup$ with a $\varliminf$ in the statement of Theorem \ref{thm:Rec+Trans=Balanced}, or upgrade $c_{\lambda, \mu}$ with 1. Similarly, when $\lambda = \mu$ we should be able to say $c_{\lambda, \mu} =1$. It seems that both of these arguments require a stronger coupling and comparisons with random walks than the one we actually develop in the sequel. The main difficulty is that these random walks have heavy-tailed continuous increments and are subject to unintuitive behaviours.   
\end{remark}
	
\section{The (single) time fractional queue}
	\label{sec:model1}

	\subsection{Moment generating function}	
	The first thing we would like to establish is that the queue length
	$ L^{(1)}_{\alpha, \lambda}(t)$ with
	mass function 
	$\{p_{0,i}(t)\}_{i \ge 0}$ has exponential moments in a neighbourhood of 0 for any fixed $t$. 
	%as long as the discrete queue is positive recurrent, i.e. $p < 1/2$. 
	To this end, let $s > 0$ and estimate
	\begin{align*} 
	\E(e^{s L^{(1)}_{\alpha, \lambda}(t)}) 
	&= \E\Big(e^{s Q_{N_{\lambda}^{(\alpha)}(t)}}\Big) 
%	= \sum_{n = 0}^{\infty}  E\Big(e^{s Q_{N_{\lambda}^{(\alpha)}(t)}}\Big| N_{\lambda}^{(\alpha)} =n\Big) 
%		\P\{ N_{\lambda}^{(\alpha)} (t) = n\} \\
%	&= \sum_{n = 0}^{\infty}  \E\big(e^{s Q_{n}}\big) \P\{ N_{\lambda}^{(\alpha)} (t) = n\} 
%	\le  \sum_{n = 0}^{\infty} 
%	e^{sn} \P\{ N_{\lambda}^{(\alpha)} (t) = n\}, \quad \text{since } Q_n \le n,\\
	\le \E(e^{s N^{(\alpha)}_\lambda(t)}) 
	= E_{\alpha, 1}(\lambda^{\alpha} t^{\alpha} (e^{s}-1)) < \infty, \quad \text{since } Q_n \le n.
	\end{align*}
	
	This is enough to guarantee that the moment generating function is well-defined for $s \in \R$.  
	
	The interested reader can see a direct proof of Theorem \ref{thm:Q1mgf} in Appendix \ref{sec:B}. The steps of the proof follow the methodology of \cite{Cahoy15} which is written for the probability generating function for the general fractional queue. These steps also imitate classical methodology for the M/M/1 queue \cite{Bailey54}. The Laplace transform of the p.g.f.~for the queue with starting length $i \in \bN$ is explicitly computed in \cite{Cahoy15} and is given by the formula
	
	\begin{equation} \label{eqn: single queue pgf LT}
	     \widetilde{G}_{\alpha, 1}(z,s) = s^{\alpha - 1} \frac{z^{i + 1} - (1-z)r_2(s)^{i+1}(1-r_2(s))^{-1}}{-p(z-r_2(s))(z-r_2(s))}.
	\end{equation}
	Here, $r_1(s), r_2(s)$ are those from equation \eqref{r1r2}.
	This is enough to give us the Laplace transform of all probabilities 
	 \be \label{eq:pon}
	  \widetilde{p}_{0,n}(s) = \frac{s^{\alpha-1}}{p(1-r_2(s))r_1(s)} \frac{1}{r_1^n(s)} = \frac{  \widetilde{p}_{0,0}(s) }{r_1^n(s)}, \quad n \ge 0,
	 \ee 
	 in terms of $ \widetilde{p}_{0,0}(s) $ which is given by (see also  \eqref{eq:lappoo})
	 \be  \label{eq:pon2}
	  \widetilde{p}_{0,0}(s) = \frac{s^{\alpha-1}}{p(1-r_2(s))r_1(s)} 
	  = \frac{r_2(s)s^{\alpha-1}}{(1-p)(1-r_2(s))} = \frac{1}{s} - \frac{p}{1-p} \frac{r_2(s)}{s}.
	  \ee
	
	%Since the p.g.f.\ has already been computed and we have that the m.g.f.\ exists, 
	%the simplest way to obtain the result is to use the transformation $u = e^{z}$ in the p.g.f.
	The proof of Theorem \ref{thm:Q1mgf} is postponed to the appendix 	as the steps  are similar to those of the p.g.f.~ 
	and we continue the section with applications and examples by considering equation \eqref{mom} as given.
		  
	 \subsection{Applications for asymptotics} 
	 \label{sec:as}
	 At this particular point we can expand $ \widetilde{M}_{\alpha, 1}(z,s) $ in to two different series, 
	 for two different applications. First, 
	 \be \label{eq:mlap1}
	 \widetilde{M}_{\alpha, 1}(z,s) = \frac{s^{\alpha-1}}{p(1-r_2(s))r_1(s)}\sum_{n=0}^{\infty}\frac{1}{r_1^n(s)}e^{nz}.
	 \ee
	This is obtained by taking the Laplace transform of the series for ${M}_{\alpha, 1}(z,t)$ and using \eqref{eq:pon}, \eqref{eq:pon2}.
	
	 Second, we can expand \eqref{eq:mlap1} in powers of $z$. We have 
	 \begin{align}
	  \widetilde{M}_{\alpha, 1}(z,s) &=  \frac{s^{\alpha-1}}{p(1-r_2(s))r_1(s)}\sum_{n=0}^{\infty}\frac{1}{r_1^n(s)}e^{nz}
	  =  \frac{s^{\alpha-1}}{p(1-r_2(s))r_1(s)}\sum_{n=0}^{\infty}\frac{1}{r_1^n(s)}\sum_{k=0}^\infty \frac{n^kz^k}{k!} \notag\\
	  &=  \frac{s^{\alpha-1}}{p(1-r_2(s))r_1(s)}\sum_{k=0}^{\infty}\Big(\sum_{n=0}^\infty\frac{1}{r_1^n(s)} n^k\Big) \frac{z^k}{k!}.
	  %& = \frac{s^{a-1}}{p(1-r_2(s))r_1(s)}\Big( \frac{1}{r_1-1} + \frac{z}{(r_1-1)^2} +  \frac{(r_1+1)}{2(r_1-1)^3}z^2+ \ldots\Big)
	 \end{align}
	 Then a coefficient comparison gives the Laplace transform for all  
	 \be
	 \mathcal L\{ \E((L_{\alpha,1}^{(1)}(t) )^k)\}
	 =  \frac{s^{\alpha-1}}{p(1-r_2(s))r_1(s)}\Big(\sum_{n=0}^\infty\frac{1}{r_1^n(s)} n^k\Big) 
	 =   \frac{s^{\alpha-1}}{p(1-r_2(s))r_1(s)} \Phi\Big(\frac{1}{r_1(s)}, -k ,0 \Big).
	 \ee
	 The function $\Phi$ is the Hurwitz-Lerch transcendent. In our case, since $k$ is an integer, it can be computed explicitly 
	 with repeated derivatives of the geometric series. It can be directly verified that for any $a >1$
	 \[
	  \Phi\Big(\frac{1}{a}, -k ,0 \Big)  = - a  \frac{d}{da}\Phi\Big(\frac{1}{a}, -k+1 ,0 \Big), 
	 \]
	which gives a fast way to compute the Laplace transform of the moments. 	  
	We can then directly compute the Laplace transform of the mean and, by inverting it, its formula. We have  
	\begin{align}
	\mathcal L\{\mu_{L^{(1)}_{\alpha,1}(t)}\}(s) &= \frac{2p-1}{s^{1+\alpha}} + \frac{1-p}{s^{\alpha}}\widetilde{p}_{0,0}(s)\\
	&\phantom{xxxxxxi}\Longleftrightarrow \mu_{L^{(1)}_{\alpha,1}(t)} =  \frac{2p-1}{\Gamma(1+a)}t^{\alpha} + \frac{1-p}{\Gamma(a)} \int_{0}^t (t - u)^{\alpha-1}p_{0,0}(u)\,du \label{eq:LenLap} \notag\\
	&\phantom{xxxxxxxxxxxxxxxx} = \frac{2p-1}{\Gamma(1+a)}t^{\alpha} + (1-p) J^{\alpha}p_{0,0}(t).
	\end{align}
	Above we used $J^{\alpha}p_{0,0}(t) $ to denote the Riemann -Liouville fractional integral of $p_{0,0}(t) $, in general defined by
	\[
	J^{\alpha}f(t) = \frac{1}{\Gamma(\alpha)} \int_0^t (y - t)^{\alpha-1}f(y)\,dy. 
	\]
	Similarly, we can compute by hand the Laplace transform for the second moment of the queue length, given by
	\be \label{eq:ohno}
	 \mathcal L\{ \E((L_{\alpha,1}^{(1)}(t) )^2)\} = \widetilde{p}_{0,0}(s) \frac{r_1(s)(1+r_1(s))}{(r_1(s)-1)^3}
	 = \frac{p^3}{s^{3a}} \widetilde{p}_{0,0}(s) r_1(s)(1+r_1(s))(1-r_2(s))^3.
	\ee
	Repeated applications of \eqref{eq:useful} and \eqref{eq:lappoo} can then `simplify' the expression into 
	\begin{align*}
	 \mathcal L\{ \E((L_{\alpha,1}^{(1)}(t) )^2)\} &= \frac{1}{s^{3a+1}}\Big( \theta_1(p)+\theta_2(p)s^{\alpha} 
	 + \theta_3(p)s^{2\alpha} \Big) \\ 
	 &\phantom{xxxxxxxxxxxxxxxxxxx} +  \frac{1}{s^{3a}} \widetilde{p}_{0,0}(s)\Big( \theta_4(p)+\theta_5(p)s^{\alpha} 
	 + \theta_6(p)s^{2\alpha} \Big),
	\end{align*}
	which can the be inverted should a close formula be desirable. 
	
	\begin{example}[Asymptotics for the probability of an empty queue] 
	\label{ex:emptyQ1}
	First assume $p \neq 1/2$. As $s \to 0$ we 
	Taylor expand $\widetilde{p}_{0,0}(s)$
	\begin{align}
		\widetilde{p}_{0,0}(s) & 
		= \frac{1-2p}{2(1-p)}\frac{1}{s}-\frac{1}{2(1-p)s^{1-\alpha}} + \frac{|1-2p|}{2(1-p)s}\sqrt{ 1+ \frac{2}{(1-2p)^2}s^\alpha +o(s^\alpha)}\notag \\
		%&=  \frac{1-2p}{2(1-p)}\frac{1}{s}-\frac{1}{2(1-p)s^{1-\alpha}} + \frac{|1-2p|}{2(1-p)s}\Big(1
		%+ \frac{s^{\alpha}}{(1-2p)^2} +o(s^{\alpha})\Big)\\
		&= \Big( \frac{1-2p}{2(1-p)}  + \frac{|1-2p|}{2(1-p)} \Big)\frac{1}{s} 
		+ \frac{1}{2(1-p)}\Big(\frac{1}{|1-2p|}  - 1\Big)\frac{1}{s^{1-\alpha}} +o(s^{\alpha-1}). \label{eq:laptayl}
	\end{align} 
	If $p < 1/2$, which means $Q_n$ is positive recurrent, the leading term above is $\frac{1-2p}{1-p}s^{-1}$ as $s \to 0$. 
	Then from the Tauberian theorem of Laplace transforms we have 
	\be \label{eq:convequil}
	\frac{1-2p}{1-p} = \lim_{t \to \infty} \frac{ \displaystyle\int_0^t p_{0,0}(u)\, du}{t} =  \lim_{t \to \infty} p_{0,0}(t),
	\ee
	which coincides with \eqref{eq:equilibriumform}. A similar approach can prove the limiting distribution for $p_{0,n}$, 
	via \eqref{eq:pon}. For more on the equilibrium distribution, see the next section. 
	
	If $p > 1/2$, the $s^{-1}$ disappears from \eqref{eq:laptayl} and the leading term is $\frac{1}{2p-1}s^{\alpha-1}$. Therefore, 
	\[
	\frac{1}{(2p-1)\Gamma(2-\alpha)} = \lim_{t \to \infty} \frac{ \displaystyle\int_0^t p_{0,0}(u)\, du}{t^{1-\alpha}} 
	=  \lim_{t \to \infty} \frac{p_{0,0}(t)}{(1-\alpha)t^{-\alpha}},
	\]
	which gives 
	\[
	p_{0,0}(t) \sim \frac{1}{(2p-1)\Gamma(1 - \alpha)}t^{-\alpha}, \quad t \to \infty.
	\]
	Finally, focus on the particular case of $p = 1/2$. We have for $s \to 0$
	\begin{align*}
	\widetilde{p}_{0,0}(s) = \frac{1}{s} - \frac{r_2(s)}{s} &
	= - \frac{1}{s^{1-\alpha}} +\frac{\sqrt2s^{\alpha/2}\sqrt{1 + s^{\alpha}/2}}{s}\\
	&= - \frac{1}{s^{1-\alpha}} +\frac{\sqrt2s^{\alpha/2}(1+s^{\alpha}/4+ o(s^{\alpha}))}{s}
	= \frac{\sqrt2}{s^{1-\alpha/2}} +o(s^{-1+\alpha/2}).
	\end{align*}
	Then as before, 
	\be
	p_{0,0}(t) \sim \frac{\sqrt2}{\Gamma(1-\alpha/2)} t^{-\alpha/2}, \quad t \to \infty.
	\ee
	\end{example}
	
	\begin{example}[Asymptotics for the mean queue length and variance] 
	\label{ex:emptyQ2}
	From Example \ref{ex:emptyQ1} and equation \eqref{eq:LenLap}, 
	we obtain the Laplace transform asymptotics as $s \to 0$ for the queue length 
	\be
	\mathcal L\{\mu_{L^{(1)}_{\alpha,1}(t)}\}(s) = 
	\begin{cases}\displaystyle
		\frac{p}{1-2p}s^{-1}+o(s^{-1}), &\quad p < 1/2,\vspace{0.3cm}\\
		\vspace{0.3cm}
		\displaystyle\frac{\sqrt2}{2s^{1+\alpha/2}}+o(s^{-1-\alpha/2}), &\quad p = 1/2,\\
		\displaystyle\frac{2p-1}{s^{1+\alpha}} +o(s^{-1-\alpha}), &\quad p > 1/2.
	\end{cases}
	\ee 
	This, via the Tauberian theorem for Laplace transforms, gives the asymptotics 
	\be
	\mu_{L^{(1)}_{\alpha,1}(t)} \sim 
	\begin{cases}\displaystyle
		\frac{p}{1-2p}, &\quad p < 1/2,\vspace{0.3cm}\\
		\displaystyle\frac{\sqrt2}{2\Gamma(1+\alpha/2)}t^{\alpha/2}, &\quad p = 1/2,\vspace{0.3cm}\\
		\displaystyle\frac{2p-1}{\Gamma(1+\alpha)}t^{\alpha}, &\quad p > 1/2.
	\end{cases}
	\ee 
	By Taylor expanding \eqref{eq:ohno} up to 3 terms in order to find the asymptotic of the second moment, we also obtain that 
	\be
	\text{Var}(L^{(1)}_{\alpha,1}(t)) \sim \begin{cases}\displaystyle
		\mathcal C_1, &\quad p < 1/2,\vspace{0.3cm}\\
		\displaystyle\mathcal C_2 t^{\alpha}, &\quad p = 1/2,\vspace{0.3cm}\\
		\displaystyle\mathcal C_3t^{2\alpha}, &\quad p > 1/2.
	\end{cases}
	\ee
	Note that when $p \le 1/2$ the Taylor approximation can only be with the first two terms of the expansion but the
	 case $p>1/2$ requires the third term. %\note{this requires careful checking as the Taylor expansions are a mess}
	 
	 \begin{remark}
	 Note that in the case $p > 1/2$ the orders of both the mean and the variance coincide with the 
	 corresponding orders of the fractional Poisson process. This is yet another indication of the transience
	 of the queue length in this case.
	 \end{remark}
	\end{example}
	
	\subsection{Equilibrium and mixing times in the ergodic regime}

	There are several ways to compute the equilibrium probabilities as 
	$t\to \infty$ when $p < 1/2$. For example, as in Example \ref{ex:emptyQ1}, 
	we can perform the same calculations that led to \eqref{eq:convequil} and 
	obtain in general that the limiting mass function of the queue length coincides with that of 
	$Q_n$ from \eqref{eq:equilibriumform}, namely
	\be\label{eq:timecon} 
	\lim_{t \to \infty} p_{0,k}(t) = \pi_k = \frac{1-2p}{1-p}\Big(\frac{p}{1-p}\Big)^k, \quad k \in \N_0.
	\ee 
	
	 This is just a limiting statement for the probabilities for fixed $k$ and we would
	 like to upgrade the statement in terms of mixing time for the chain, in terms of the 
	 total variation distance. 
	
	\begin{proof}[Proof of Theorem \ref{thm:Q1mixtime}]
	 Our starting point is Corollary 3.1 from \cite{Moller89} which states that 
	 for a spatial birth and death chain, potentially on an infinite space
	 (like $Q_n$) we have the following 
	 estimate for the total variation distance between the invariant distribution 
	 $\{\pi_k\}_{k \in \N_0}$ and $\{\P_{\gamma}\{ Q_n = k\}\}_{k\in \N_0}$. The initial distribution 
	 $\gamma$ for the chain must satisfy a non-degeneracy condition (see conditions in \cite{Moller89}) 
	 which is satisfied if for example $\gamma$ has a finite support. 
	 Using the result, we can find 
	 positive constants $c$ and $ \theta <1$ such that 
	 \be
	 \sup_{A \in \mathcal F} \Big| \pi(A) - \P_{\gamma}\{ Q_n \in A\} \Big| \le c \, \theta^{n}.
	 \ee
	 
	 Note that Corollary 3.1 is stated for continuous times birth and death chains, 
	 but indeed the same follows for discrete times by the concentration 
	 of the Poisson process around its mean. 
	
	We use  this estimate to bound the total variation distance between $p_{\gamma}(t)$ and $\pi$.  
	Condition on the events of the counting process to obtain 
	\begin{align*}
	p_{\gamma,k}(t) =  \sum_{n=0}^{\infty} \P_{\gamma}\{ Q_n = k\}  \P\{N^{(\alpha)}(t)=n \}
	\end{align*}
	and then estimate
	\begin{align*}
	\|p_{\gamma}(t) - \pi \|_{TV} &=  \frac{1}{2} \sum_{k \in \N_0} |p_{\gamma,k}(t) -\pi_k| 
	=  \frac{1}{2} \sum_{k \in \N_0} | \sum_{n=0}^{\infty} \P_{\gamma}\{ Q_n = k\}  \P\{N^{(\alpha)}(t)=n \} -\pi_k(t)|\\
	&\le   \frac{1}{2} \sum_{k \in \N_0}  \sum_{n=0}^{\infty} \big|\P_{\gamma}\{ Q_n = k\} - \pi_k\big|  \P\{N^{(\alpha)}(t)=n \}\\
	&=  \sum_{n=0}^{\infty} \big\|\P_{\gamma}\{ Q_n \in \cdot\} - \pi\big\|_{TV}  \P\{N^{(\alpha)}(t)=n \} 
	\le  \sum_{n=0}^{\infty} c \theta^n \P\{N^{(\alpha)}(t)=n \}\\
	& = c \E(\theta^{N^{(\alpha)}(t)}) = c E_{\alpha, 1}( -(1-\theta)t^{\alpha}). 
	\end{align*}
	
	\begin{remark} The above estimate shows that the total variation distance from the equilibrium decays like 
	$t^{-\alpha}$, the decay rate of the Mittag-Leffler function. Note that the upper bound above is strictly monotone in $t$.  
	\end{remark}
	
	For any $\e>0$ define $T^{c, \theta}_{\e}$ to be 
	\[
	T^{c, \theta}_{\e} = \inf\{ t > 0:  c E_{\alpha, 1}( -(1-\theta)t^{\alpha}) \le \e \}. 
	\]
	Then we conclude that
	\be \label{eq:mixtB1}
	T^{\rm{mix}}_\e \le T^{c, \theta}_{\e}. 
	\ee 
	We can bound $T^{c, \theta}_{\e}$ from above the following way. By monotonicity, $t >   T^{c, \theta}_{\e}$ 
	if and only if   
	$c \E(\theta^{N^{(\alpha)}(t)})< \e$. 
	
	There exists two uniform constant $U_{\alpha, \theta}$, $L_{\alpha, \theta}$  
	and time $t_0$ so that for all $t > t_0$ we have 
	\be \label{eq:tail0}
	 \frac{L _{\alpha, \theta}}{t^{\alpha}} \le c E_{\alpha, 1}( -(1-\theta)t^{\alpha})\le \frac{U_{\alpha, \theta}}{t^{\alpha}}.
	\ee
	This follows directly from the asymptotic behaviour in equation \eqref{eq:asym1} where also a suitable $t_0$ can be selected. 
	When $t =  T^{c, \theta}_{\e}$, we have by continuity that  $c E_{\alpha, 1}( -(1-\theta)( T^{c, \theta}_{\e})^{\alpha})=\e$.
	Let $\e \le L_{\alpha, \theta}t_0^{-\alpha}$ which in tern gives 
	\[
	\e = c E_{\alpha, 1}( -(1-\theta)( T^{c, \theta}_{\e})^{\alpha}) \le  L_{\alpha, \theta}t_0^{-\alpha} \le  c E_{\alpha, 1}( -(1-\theta)t_0^{\alpha}).
	\]
	By the monotonicity of the 
	Mittag-Leffler function we then have that  $T^{c, \theta}_{\e} > t_0$. 
	Thus we are allowed to use \eqref{eq:tail0} and we must have  
	\[
	 \frac{L_{\alpha, \theta}}{(T^{c, \theta}_{\e})^{\alpha}} \le \e \le  \frac{U_{\alpha, \theta}}{(T^{c, \theta}_{\e})^{\alpha}} \Longleftrightarrow T^{c, \theta}_{\e} = \mathcal O(\e^{-1/\alpha}).
	\]
	The theorem follows for $\e_0 = L_{\alpha, \theta}t_0^{-\alpha}$ and $U_{\alpha, \lambda, \mu_0}= U_{\alpha, \theta}$.
	\end{proof}
\bigskip
\section{The renewal queue}
	\label{sec:model2}

Recall $p_{\alpha, \beta}^{ \lambda, \mu}$ from \eqref{eq:pal} is the probability of an arrival when a renewal event occurs. We can easily find some special values of $p_{\alpha, \beta}^{ \lambda, \mu}$. 

First, consider the case where either $\alpha$ or $\beta$ is equal to 1 and $\mu = \lambda$. Then we have 
\begin{align*}
1- p_{1, \beta}^{ \lambda, \lambda} &=1-  p_{1, \beta}^{ 1, 1} = P\{ X_{1,1} > X_{\beta, 1}\} = \int_{0}^{\infty} e^{-t}f_{X_{\beta}}(t)\,dt=\E(e^{-X_{\beta}})= \frac{1}{2}.
\end{align*}
So for this choice of parameters the embedded chain will correspond to a null-recurrent queue. 

Then, if we factor in the scalings, we have 
\[
p_{1, \beta}^{ \lambda, \mu} =\P \{ \mu \lambda^{-1} X_{1} < X_{\beta}\}= \int_{0}^{\infty} (1-e^{-\lambda t/\mu})f_{X_{\beta}}(t)\,dt = \frac{\lambda^\alpha}{\mu^{\alpha} + \lambda^{\alpha}}.						
\]
Note that in this case $p_{1, \beta}^{ \lambda, \mu} < 1/2 \Longleftrightarrow \mu > \lambda$, i.e.\ the embedded chain is positive recurrent precisely when the departure process is faster.  

Similarly, for any $\alpha \in (0,1)$, by symmetry  
\[
p_{\alpha,\alpha}^{\lambda, \lambda} = \frac{1}{2}, \]
and by a straightforward coupling argument, 
\[
p_{\alpha,\alpha}^{\lambda, \mu} < \frac{1}{2} \Longleftrightarrow \mu > \lambda. 
\]
\begin{figure}[h]
\includegraphics[height=8cm]{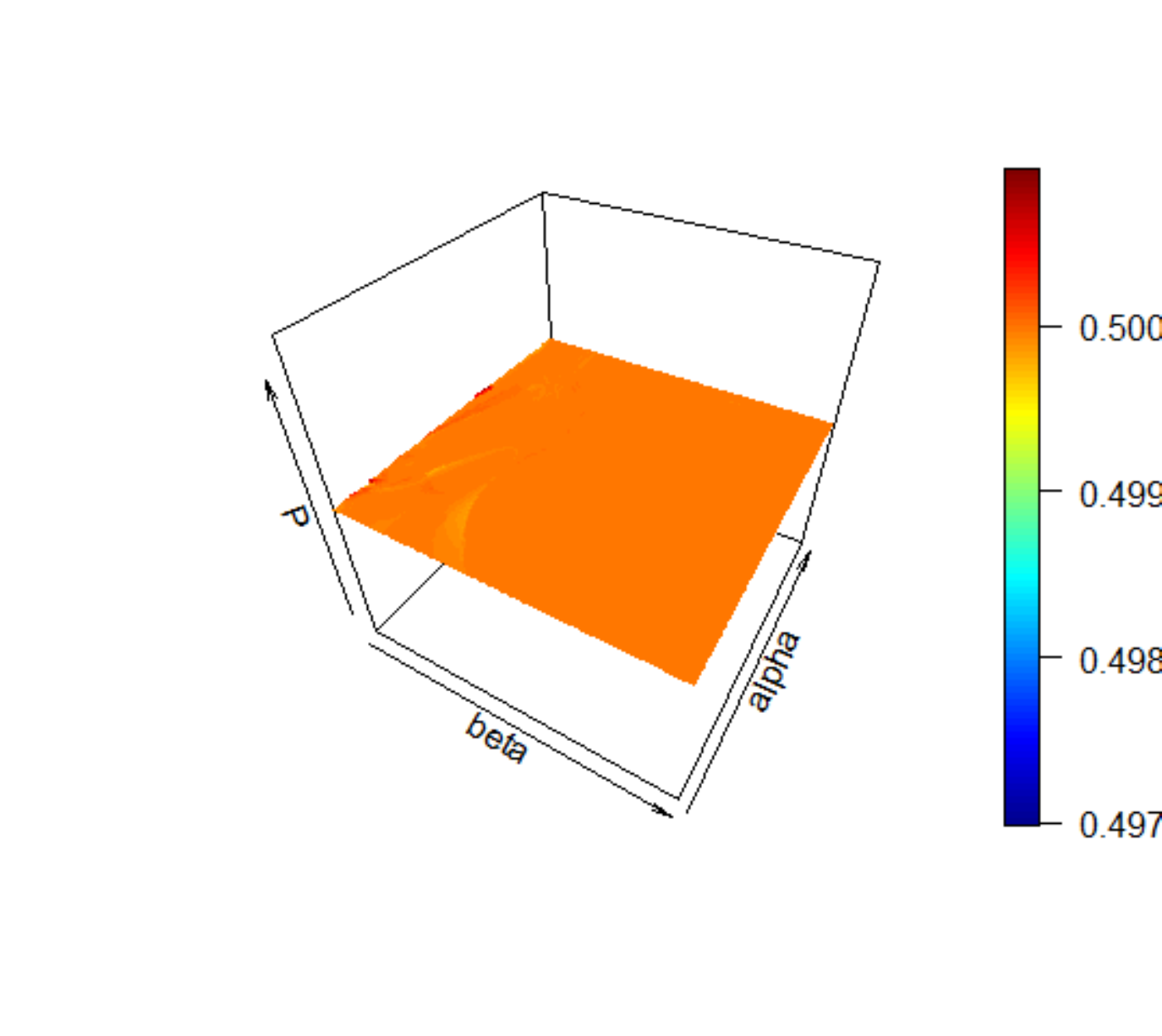}
\caption{A numerical simulation for the value of $p_{\alpha, \beta}^{\lambda, \mu}$ when $\mu=\lambda$. Note that even the minor discrepancies when $\alpha, \beta$ are small are still within $0.001$ distance from 1/2 which is what we expect the true value to be.} 
\label{fig:limits}
\end{figure}

\begin{conjecture} 
\label{lem:1/2}
Let $\alpha$, $\beta$ in $(0,1]$ and let $X_{\alpha}$ and $X_{\beta}$ two independent Mittag-Leffler distributions with power indices $\alpha$, $\beta$ respectively and scalings $\lambda=\mu$. Then 
\[
\P\{X_{\alpha} < X_{\beta}\} = p_{\alpha, \beta}^{\lambda, \lambda}= \frac{1}{2}.
\]
Then, the embedded discrete queue will be null recurrent if and only if $\lambda = \mu$ and positive recurrent if and only $\mu > \lambda$. I.e.\ the behaviour of the queue length depends solely on the time scales, not the power tails.
\end{conjecture}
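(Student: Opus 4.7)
The plan is to reduce the conjecture to a clean integral identity whose value is forced to equal $1/2$ by an involution, and then leverage that identity through a coupling to recover the recurrence dichotomy. By the scaling relation $X_{\alpha, \lambda} = \lambda^{-1} X_\alpha$ of \eqref{eq:outscale}, I may set $\lambda = \mu = 1$ for the main identity. Writing
\[
\P\{X_\alpha < X_\beta\} = \E[F_\alpha(X_\beta)] = 1 - \E[E_{\alpha,1}(-X_\beta^\alpha)],
\]
the task reduces to proving $\E[E_{\alpha,1}(-X_\beta^\alpha)] = 1/2$ for every $\alpha, \beta \in (0,1]$. The endpoints $\alpha = 1$ or $\beta = 1$ are already covered in the excerpt (for $\alpha = 1$, $E_{1,1}(-x) = e^{-x}$ and $\E[e^{-X_\beta}] = 1/(1+1^\beta) = 1/2$; the other follows by complementarity), so only $\alpha, \beta \in (0,1)$ requires work.

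For such $\alpha$ the function $x \mapsto E_{\alpha,1}(-x^\alpha)$ is completely monotone, and I would apply Stieltjes inversion to its Laplace transform $s^{\alpha-1}/(1+s^\alpha)$ to obtain the Pollard-type representation
\[
E_{\alpha,1}(-x^\alpha) = \frac{\sin(\alpha\pi)}{\pi} \int_0^\infty \frac{e^{-xt}\, t^{\alpha-1}}{t^{2\alpha} + 2t^\alpha \cos(\alpha\pi) + 1} \, dt.
\]
Fubini together with $\E[e^{-tX_\beta}] = (1+t^\beta)^{-1}$, followed by the substitution $u = t^\alpha$ with $\gamma := \beta/\alpha$ and $\theta := \alpha\pi$, then converts the target expectation into
\[
\E[E_{\alpha,1}(-X_\beta^\alpha)] = \frac{\sin\theta}{\alpha \pi} \int_0^\infty \frac{du}{(1+u^\gamma)(u^2 + 2u\cos\theta + 1)}.
\]

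The observation that closes the argument is that the involution $u \mapsto 1/u$ carries the integrand to $u^\gamma/\bigl((1+u^\gamma)(u^2+2u\cos\theta+1)\bigr)$, so averaging the two forms collapses the $\gamma$-dependent factor:
\[
2 \int_0^\infty \frac{du}{(1+u^\gamma)(u^2+2u\cos\theta+1)} = \int_0^\infty \frac{du}{u^2+2u\cos\theta+1} = \frac{\theta}{\sin\theta},
\]
the last equality being a standard arctan evaluation on $\theta \in (0,\pi)$. Substituting back yields $\E[E_{\alpha,1}(-X_\beta^\alpha)] = (\sin\theta)/(\alpha\pi) \cdot \theta/(2\sin\theta) = 1/2$, which is independent of $\gamma = \beta/\alpha$ exactly as the conjecture predicts.

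With the core identity $p_{\alpha,\beta}^{\lambda,\lambda} = 1/2$ in hand, the queueing dichotomy follows from the coupling used earlier: for fixed $\lambda$ the map $\mu \mapsto p_{\alpha,\beta}^{\lambda,\mu}$ is continuous and strictly decreasing, with limits $1$ and $0$ as $\mu/\lambda \to 0$ and $\mu/\lambda \to \infty$, so $p < 1/2 \iff \mu > \lambda$. Plugging into Proposition \ref{prop:renewal} gives null recurrence precisely at $\lambda = \mu$ and positive recurrence precisely at $\mu > \lambda$. The only step I expect to require genuine care is the Pollard-type spectral representation (convergence, branch choice and the Fubini exchange), but this is a classical identity that can be rederived from a keyhole-contour integral of $s^{\alpha-1}/(1+s^\alpha)$ around the negative real axis; once it is in place, the $u \mapsto 1/u$ trick makes the identity essentially combinatorial.
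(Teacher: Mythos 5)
Your proposal is correct, and you should be aware that it actually \emph{settles} a statement the paper leaves open: this is explicitly labelled a Conjecture, and the text says ``Unfortunately we have not been able to rigorously prove Conjecture~\ref{lem:1/2}'' and then falls back on the weaker Proposition~\ref{prop:renewal} (existence of \emph{some} critical $\rho^*_{\alpha,\beta}$, not $\rho^*_{\alpha,\beta}=1$) plus numerical evidence. There is therefore no ``paper proof'' to compare against; what you have written is a genuine new argument.

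I checked the computation and it goes through. The representation you invoke,
\[
E_{\alpha,1}(-x^{\alpha})
= \frac{\sin(\alpha\pi)}{\pi}\int_{0}^{\infty}
\frac{e^{-xt}\,t^{\alpha-1}}{t^{2\alpha}+2t^{\alpha}\cos(\alpha\pi)+1}\,dt,
\qquad \alpha\in(0,1),
\]
is precisely the Bernstein/Stieltjes spectral decomposition of the (completely monotone) Mittag--Leffler survival function: taking Laplace transforms in $x$ recovers $s^{\alpha-1}/(1+s^{\alpha})$, and the density is the Stieltjes--Perron boundary value $-\tfrac{1}{\pi}\operatorname{Im}\bigl((-t)^{\alpha-1}/(1+(-t)^{\alpha})\bigr)$, whose positivity is manifest since $t^{2\alpha}+2t^{\alpha}\cos(\alpha\pi)+1=(t^{\alpha}+\cos\alpha\pi)^2+\sin^2\alpha\pi>0$ and $\sin\alpha\pi>0$. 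Positivity of the kernel legitimises Tonelli and the exchange with $\E[e^{-tX_{\beta}}]=(1+t^{\beta})^{-1}$. The substitution $u=t^{\alpha}$ exactly cancels the Jacobian with $t^{\alpha-1}$, landing on
\[
\E\bigl[E_{\alpha,1}(-X_{\beta}^{\alpha})\bigr]
= \frac{\sin\theta}{\alpha\pi}\int_{0}^{\infty}
\frac{du}{(1+u^{\gamma})(u^{2}+2u\cos\theta+1)},
\qquad \theta=\alpha\pi,\ \gamma=\beta/\alpha,
\]
and the $u\mapsto 1/u$ symmetry of the quadratic factor kills the $\gamma$-dependence, giving $\tfrac{1}{2}\cdot\tfrac{\theta}{\sin\theta}$ for the integral and hence exactly $1/2$. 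The final arctan evaluation is valid for all $\theta\in(0,\pi)$, which is the whole range $\alpha\in(0,1)$, and the boundary cases $\alpha=1$ or $\beta=1$ are handled separately as you note.

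What your argument adds beyond the paper: the paper's Proposition~\ref{prop:renewal} only shows $\rho\mapsto p_{\alpha,\beta}^{\rho,1}$ is strictly increasing from $0$ to $1$ and thus crosses $1/2$ at some $\rho^{*}_{\alpha,\beta}$; your identity pins $\rho^{*}_{\alpha,\beta}=1$ for every $(\alpha,\beta)\in(0,1]^{2}$, so the recurrence classification of the renewal queue depends only on the ratio $\lambda/\mu$ and not on the tail indices, exactly as conjectured. It would be worth writing out the spectral step carefully (keyhole contour or Stieltjes inversion) when incorporating this into the manuscript, but I see no gap.
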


Unfortunately we have not been able to rigorously prove Conjecture \ref{lem:1/2}, so we have Proposition \ref{prop:renewal} 
Simultaneously, the conjecture is strongly supported by high precision numerical calculations and you can see the results in Figure \ref{fig:limits}.

\begin{proof}[Proof of Proposition \ref{prop:renewal}] 
Since 
\[
p_{\alpha, \beta}^{\lambda, \mu} = p_{\alpha, \beta}^{\lambda/\mu, 1} 
\]
define $\rho = \frac{\lambda}{\mu}$ to be a parameter in $(0,\infty)$, and consider the probabilities  $p_{\alpha, \beta}^{\rho,1} = \P\{ \rho^{-1}X_\alpha <  X_{\beta}\}$. We have 
\[
p_{\alpha, \beta}^{\rho,1} = \int_{0}^\infty \P\{X_{\alpha} \le \rho t\} f_{X_{\beta}}(t)\,dt= \int_{0}^\infty F_{X_{\alpha}} ( \rho t) f_{X_{\beta}}(t)\,dt.
\]
With this representation we see that $p_{\alpha, \beta}^{\rho,1} $ is strictly monotonically increasing (from 0 to 1) as $\rho \to \infty$ by the complete monotonicity of $F_{X_{\alpha}}$ and the dominated convergence theorem. Then, there will be a critical $\rho^*_{\alpha, \beta}$ such that $p_{\alpha, \beta}^{\rho^*_{\alpha, \beta},1}=1/2$.

The proposition then follows, since  $\rho^*_{\alpha, \beta}$ will be the critical $\rho$ that separates positive recurrence and transience in the embedded chain. The embedded chain will be null recurrent when $\rho = \rho^*_{\alpha, \beta}$.
\end{proof}

\begin{remark} Conjecture \ref{lem:1/2} says that $\rho^*_{\alpha, \beta} =1$ for all values of $\alpha, \beta$. This is verified for the cases $\alpha\vee\beta =1$ or $\alpha = \beta$. Note that in this case the characterisation of recurrence and transience of the embedded queue length would be equivalent to that of the M/M/1 queue. \qed
\end{remark}

\section{The Mittag-Leffler GI/GI/1 queue} 

	\label{sec:model3}

	\subsection{Limit theorems} 
	
	To prove these, we will first establish the convergence of the base fractional Poisson process, $N^{(\alpha)}_{\nu}(t)$, for $\alpha \in (0,1)$ and $\nu > 0$.x

%\begin{lemma} \label{FLT for single FCP}
%Let $\gamma \in (0,1)$. Then we have the following convergence for the fractional Poisson process $N^{(\alpha)}_{\nu}$ in the Skorohod $J_1$ topology dependent on the value of $\gamma$:
%\begin{enumerate}
%    \item $\gamma = \alpha$ implies that
%    \begin{equation}
%        \left(\frac{N^{(\alpha)}_{\nu}(t\tau)}{t^{\gamma}}\right)_{\tau \geq 0} \overset{(w)}{\longrightarrow} \left(\nu^{\alpha} Y_{\alpha}(\tau)\right)_{\tau \geq 0},
%    \end{equation} \\
%    \item $\gamma > \alpha$ implies that
%    \begin{equation}
%         \left(\frac{N^{(\alpha)}_{\nu}(t\tau)}{t^{\gamma}}\right)_{\tau \geq 0} \overset{(w)}{\longrightarrow} \{0\}_{\tau \geq 0}.
%    \end{equation} 
%   \end{enumerate}
%\end{lemma}

\begin{lemma} \label{FLT for single FCP}
Let $\gamma \in (0,1)$. Then we have the following convergence for the fractional Poisson process $N^{(\alpha)}_{\nu}$ in the Skorohod $J_1$ topology dependent on the value of $\gamma$:
\begin{enumerate}
    \item $\gamma = \alpha$ implies that
    \begin{equation}
        \left(\frac{N^{(\alpha)}_{\nu}(t\tau)}{t^{\gamma}}\right)_{\tau \geq 0} \overset{(w)}{\longrightarrow} \left(\nu^{\alpha} Y_{\alpha}(\tau)\right)_{\tau \geq 0},
    \end{equation} \\
    \item $\gamma > \alpha$ implies that
    \begin{equation}
         \left(\frac{N^{(\alpha)}_{\nu}(t\tau)}{t^{\gamma}}\right)_{\tau \geq 0} \overset{(w)}{\longrightarrow} \{0\}_{\tau \geq 0}.
    \end{equation} 
   \end{enumerate}
\end{lemma}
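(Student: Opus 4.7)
The plan is to exploit the representation $N^{(\alpha)}_\nu(t) = N_{\nu^\alpha}(Y_\alpha(t))$ from \eqref{FTPP-FPP relation}, combined with the self-similarity of $Y_\alpha$ in \eqref{subordinator self-similarity property} and a functional strong law of large numbers for the classical Poisson process. The key observation is that the $t^\alpha$ scaling comes from the subordinator, while the Poisson count underneath is averaged out by its own LLN.

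For part (1), I first upgrade the marginal self-similarity \eqref{subordinator self-similarity property} to process-level self-similarity: $(Y_\alpha(c\tau))_{\tau\ge 0} \overset{d}{=} (c^\alpha Y_\alpha(\tau))_{\tau\ge 0}$ for any $c > 0$. This is a standard consequence of the self-similarity of $L_\alpha$ via \eqref{inverse subordinator defn}. Combined with \eqref{FTPP-FPP relation}, this yields
\[
\left(\frac{N^{(\alpha)}_\nu(t\tau)}{t^\alpha}\right)_{\tau \ge 0} \overset{d}{=} \left(\frac{N_{\nu^\alpha}(t^\alpha Y_\alpha(\tau))}{t^\alpha}\right)_{\tau \ge 0}.
\]
Next I apply the functional SLLN for the Poisson process: as $s \to \infty$, $\left(N_{\nu^\alpha}(su)/s\right)_{u \ge 0} \to (\nu^\alpha u)_{u \ge 0}$ almost surely, uniformly on compact $u$-intervals. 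Taking $s = t^\alpha$ and composing with an (a.s.\ continuous, nondecreasing) sample path of $Y_\alpha$ gives the pointwise limit $\nu^\alpha Y_\alpha(\tau)$. Lifting to $J_1$ convergence uses a continuous mapping argument for composition: the map $(f,g) \mapsto f \circ g$ is continuous on $D[0,\infty)\times D[0,\infty)$ when the limit of $f$ is continuous and $g$ has continuous, nondecreasing paths (see e.g.\ Whitt, \emph{Stochastic-Process Limits}, Thm.~13.2.2).

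For part (2), I factorise
\[
\frac{N^{(\alpha)}_\nu(t\tau)}{t^\gamma} = t^{\alpha - \gamma}\cdot \frac{N^{(\alpha)}_\nu(t\tau)}{t^\alpha}.
\]
By part (1), the right-hand factor converges weakly in $J_1$ to $\nu^\alpha Y_\alpha(\tau)$, which is a.s.\ bounded on every compact interval. Since $\gamma > \alpha$, the deterministic prefactor $t^{\alpha-\gamma} \to 0$, so by a Slutsky-type argument the product converges weakly to the zero process. In fact one gets the stronger statement that for any $T > 0$, $\sup_{\tau \in [0,T]} |N^{(\alpha)}_\nu(t\tau)/t^\gamma| \to 0$ in probability, which trivially implies $J_1$ convergence to $0$.

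The main technical obstacle is the composition step in part (1): one must verify that the pair $(N_{\nu^\alpha}(s\,\cdot)/s,\; Y_\alpha(\cdot))$ converges jointly in $D\times D$ and that the composition map is continuous at the limit pair. Joint convergence follows from Slutsky since the first coordinate converges almost surely to a \emph{deterministic} continuous limit, which decouples it from the second. Continuity of composition is then valid because the inner limit $u \mapsto \nu^\alpha u$ is continuous and the paths of $Y_\alpha$ are continuous and nondecreasing for $\alpha \in (0,1)$. Everything else (the marginal SLLN for the Poisson process, the self-similarity upgrade) is standard.
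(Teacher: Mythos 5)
Your proposal is correct, and for part (1) it takes a genuinely different route from the paper. The paper disposes of part (1) by citing Theorem~6 of Leonenko et al.\ \cite{Leonenko19} together with the time-change/self-similarity identities; you instead derive the same convergence from first principles by combining (i) the process-level self-similarity of $Y_\alpha$, (ii) the representation $N^{(\alpha)}_\nu(\cdot) = N_{\nu^\alpha}(Y_\alpha(\cdot))$, (iii) the functional SLLN for the ordinary Poisson process, and (iv) a composition argument for c\`adl\`ag paths. That is a self-contained argument, and it also makes transparent why the limit is $\nu^\alpha Y_\alpha$ rather than some other functional of the inverse subordinator. One small streamlining you could make: because the FSLLN gives you a.s.\ uniform-on-compacts convergence of $N_{\nu^\alpha}(s\,\cdot)/s$ to the continuous deterministic limit $u\mapsto\nu^\alpha u$, composing with a fixed a.s.\ continuous nondecreasing path of $Y_\alpha$ already gives a.s.\ uniform-on-compacts convergence of the composed process, which is strictly stronger than $J_1$ convergence; you do not strictly need Whitt's composition theorem, since the latter is designed for the harder situation in which the outer sequence converges only weakly. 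You should also note, as you do implicitly, that $Y_\alpha$ has a.s.\ continuous paths for $\alpha\in(0,1)$ because $L_\alpha$ is a.s.\ strictly increasing (infinite L\'evy measure near zero). For part (2) your argument is essentially the same as the paper's: both factor out $t^{\alpha-\gamma}\to 0$ and kill the product via a Slutsky/continuous-mapping step; your version phrases it via the boundedness of $\nu^\alpha Y_\alpha$ on compacts and convergence to $0$ in probability uniformly on compacts, while the paper invokes continuity of multiplication in $J_1$ for functions with no common discontinuities. Either phrasing is fine.
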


\begin{proof}
The first statement follows from Theorem 6 of \cite{Leonenko19} and equations \eqref{FTPP-FPP relation} - \eqref{FPP self-similarity property} from earlier.

Let us look at the second case, where $\gamma > \alpha$. In this case we can write $\gamma = \alpha + \delta$, with $\delta > 0$. Now, for fixed $t \geq 0$, we can write
\begin{equation}
     \left(\frac{N^{(\alpha)}_{\nu}(t\tau)}{t^{\gamma}}\right)_{\tau \geq 0} \overset{d}{=}  \left(\frac{N^{(\alpha)}_{\nu}(t\tau)}{t^{\alpha}} \cdot \frac{t^{\alpha}}{t^{\alpha + \delta}}\right)_{\tau \geq 0} \overset{d}{=}  \left(\frac{N^{(\alpha)}_{\nu}(t\tau)}{t^{\alpha}} \right)_{\tau \geq 0} \cdot \frac{t^{\alpha}}{t^{\alpha + \delta}}
\end{equation}
It is known that multiplication in the Skorohod $J_1$ topology is continuous as long as the two functions share no common discontinuities. This is true in our case, since for every $t > 0$, $\frac{t^{\alpha}}{t^{\alpha + \delta}}$ can be viewed as a constant function with respect to $\tau$ and contains no discontinuities. It must also follow that the joint distribution of $\left(\frac{N^{(\alpha)}_{\nu}(t\tau)}{t^{\alpha}} \right)_{\tau \geq 0}$ and $\frac{t^{\alpha}}{t^{\alpha + \delta}}$ converges as $t \to \infty$, and we are free to invoke the Continuous Mapping theorem. Since $\frac{t^{\alpha}}{t^{\alpha + \delta}} \to 0$ as $t \to \infty$, it is clear that the whole expression above must converge to the constant function $0$ with respect to the $J_1$ topology. 
\end{proof}

\begin{proof}[Proof of Theorem \ref{thm: functional limit}]
Now that we have established the convergence of the base fractional Poisson process, we can use results from \cite{Whitt80} to push through to the whole queue length expression. First we draw attention to Theorem 4.1 of that paper, which states that the addition of two functions $x,y$ on a Skorohod space $\mathcal{D}$ is a continuous operation provided that $x$ and $y$ share no common discontinuities. Let $x_t$ and $y_t$ correspond to the fractional Poisson processes, $x_t(\tau) = \left\{\frac{N^{(\alpha_1)}_{\lambda}(t \tau)}{t^{\gamma}}\right\}_{\tau \geq 0}$ and $y_t(\tau) = \left\{\frac{N^{(\alpha_2)}_{\mu}(t \tau)}{t^{\gamma}}\right\}_{\tau \geq 0}$. These are two sequences of independent random variables and as such the joint distribution converges as $t \to \infty$. Since these processes share no common discontinuities we can then apply the continuous mapping theorem and Lemma \ref{FLT for single FCP} to show the convergence
\begin{equation}
    \left\{\frac{N^{(\alpha_1)}_{\lambda}(t \tau) - N^{(\alpha_2)}_{\mu}(t \tau)}{t^{\gamma}}\right\}_{\tau \geq 0} \overset{(w)}{\longrightarrow} \begin{cases}
    \left\{\lambda^{\alpha_1} Y_{\gamma}(\tau)\right\}_{\tau \geq 0} &\text{ if } \alpha_1 > \alpha_2, \\
 \left\{- \mu^{\alpha_2} Y_{\gamma}(\tau)\right\}_{\tau \geq 0} &\text{ if } \alpha_1 < \alpha_2, \\
 \left\{\lambda^{\alpha} Y_{\gamma}(\tau) - \mu^{\alpha} \tilde{Y}_{\gamma}(\tau) \right\}_{\tau \geq 0}  &\text{ if } \alpha_1 = \alpha_2 = \alpha.
    \end{cases}
\end{equation}
Now we turn to Theorem 6.4 in \cite{Whitt80}. Setting $c_n, c \equiv 0$ this states that if we have the convergence of $x_t(\tau) \to x(\tau)$ as $t \to \infty$ in the $J_1$ topology, then this implies the convergence $x_t^{\downarrow}(\tau) \to x^{\downarrow}(\tau)$, where
\begin{equation}
    x^{\downarrow}(\tau) = x(\tau) - \inf_{0 \leq s \leq \tau} x(s).
\end{equation}
Combining this with the above result finally leads us to the expression

\begin{align*}
   & \left\{\frac{L^{(\alpha_1, \alpha_2)}_{\lambda, \mu}(t \tau)}{t^{\gamma}}\right\}_{\tau \geq 0} \overset{(w)}{\longrightarrow} \\
   &\phantom{xxxxxxxxxxxx} \begin{cases} \vspace{0.2cm}
    \left\{\lambda^{\alpha_1} Y_{\gamma}(\tau) - \inf_{0 \leq s \leq \tau}\left(\lambda^{\alpha_1} Y_{\gamma}(s)\right)\right\}_{\tau \geq 0} &\text{ if } \alpha_1 > \alpha_2, \\
    \vspace{0.2cm}
 \left\{- \mu^{\alpha_2} Y_{\gamma}(\tau) - \inf_{0 \leq s \leq \tau}\left(-\mu^{\alpha_2} Y_{\gamma}(s)\right)\right\}_{\tau \geq 0} &\text{ if } \alpha_1 < \alpha_2, \\
 \left\{\lambda^{\alpha} Y_{\gamma}(\tau) - \mu^{\alpha} \tilde{Y}_{\gamma}(\tau) - \inf_{0 \leq s \leq \tau}\left(\lambda^{\alpha} Y_{\gamma}(s) - \mu^{\alpha} \tilde{Y}_{\gamma}(s)\right) \right\}_{\tau \geq 0}  &\text{ if } \alpha_1 = \alpha_2 = \alpha.
    \end{cases}
\end{align*}
To return to the original expression from the theorem, it suffices to observe that for all $\tau > 0$
\begin{equation*}
    \inf_{0 \leq s \leq \tau}\left(\lambda Y_{\gamma}(s)\right) = 0,
\end{equation*}
and
\begin{equation*}
    \inf_{0 \leq s \leq \tau}\left(-\mu^{\alpha_2} Y_{\gamma}(s)\right) = -\mu^{\alpha_2} Y_{\gamma}(\tau),
\end{equation*}
by the weak monotonicity of the subordinator.
\end{proof}

	\subsection{Recurrence results} 
	On order to show the recurrence results of Theorems 
	\ref{thm:Rec+Trans}, \ref{thm:Rec+Trans=Balanced}, we need a few preliminary lemmas, 
	as well as a coupling argument that will allow us to use a regeneration argument.  
	Our first lemma works for any pair of counting process $N_{\lambda}(t), N_{\mu}(t)$ 
	and queue length given by 
	\[
   		 L_t = \left(N_{\lambda}(t) - N_{\mu}(t)\right) 
		 - \inf_{0 \le s \le t}\left\{N_{\lambda}(s) - N_{\mu}(s) \right\}.  
	\]
	It offers a way to decide when we have an empty service. 
	
	\begin{lemma} \label{lem:useful}The following are equivalent. 
	\begin{enumerate}
	\item Time $T$ is a discontinuity point of the infimum process, i.e. 
		\be \label{eq:infequiv}
		\inf_{0 \le s \le T}\left\{N_{\lambda}(s) - N_{\mu}(s) \right\} = \inf_{0 \le s \le T-}\left\{N_{\lambda}(s) - N_{\mu}(s)\right\}-1. 
		\ee
	\item  The departure process $N_{\mu}(t)$ has a renewal point at time $T$ and queue length satisfies 
		\[
		L_{T-}=L_{T}=0.
		\] 	
	\end{enumerate}	
		In other words, at time $T$ there was an unused service time if and only if \eqref{eq:infequiv} is satisfied.
	\end{lemma}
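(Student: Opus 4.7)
The plan is to set $X_t := N_\lambda(t) - N_\mu(t)$ and $I_t := \inf_{0 \le s \le t} X_s$, so that the representation \eqref{two queue relation} simply reads $L_t = X_t - I_t$. Both are càdlàg, $I$ is non-increasing, and a drop in $I$ can occur only at a downward jump of $X$. I will work on the full-probability event that $N_\lambda$ and $N_\mu$ never jump simultaneously (automatic when both are independent Mittag-Leffler-driven fractional Poisson processes, since their inter-event distributions are absolutely continuous); on this event every jump of $X$ has magnitude exactly $1$, a $+1$ at each renewal of $N_\lambda$ and a $-1$ at each renewal of $N_\mu$. In particular, at any downward jump time $T$ we have $I_T = \min(I_{T-}, X_T)$, so the drop $I_{T-} - I_T$ equals $\max(0, I_{T-} - X_T)$.

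For the direction (1) $\Rightarrow$ (2), suppose $I_T = I_{T-} - 1$. The drop of $I$ forces a downward jump of $X$ at $T$, so $T$ is a renewal of $N_\mu$ (and not of $N_\lambda$), with $X_T = X_{T-} - 1$. The magnitude of the drop being exactly $1$ then forces $X_{T-} = I_{T-}$ (the previous running infimum was actually attained at $T-$) and $X_T = I_T$ (the new running infimum is attained at $T$). Via $L_t = X_t - I_t$, these equalities are precisely $L_{T-} = 0$ and $L_T = 0$.

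For the converse (2) $\Rightarrow$ (1), assume $T$ is a renewal of $N_\mu$ with $L_{T-} = L_T = 0$. Since a.s.\ $N_\lambda$ does not jump at $T$, $X_T = X_{T-} - 1$; the two vanishing conditions rewrite as $X_{T-} = I_{T-}$ and $X_T = I_T$, and combining them yields $I_T = X_T = X_{T-} - 1 = I_{T-} - 1$, as desired.

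The only subtlety, and the one place where the structure of the processes enters, is the exclusion of simultaneous jumps of $N_\lambda$ and $N_\mu$; I anticipate this being the main (and essentially only) obstacle, but it is settled for free in our setting by the absolute continuity of the Mittag-Leffler inter-event distributions. Everything else is a direct unpacking of the identity $L_t = X_t - I_t$ together with the elementary fact that the running infimum of a càdlàg path with unit-sized jumps drops by exactly $1$ at $T$ iff the path itself drops by $1$ at $T$ and the old infimum is attained at $T-$.
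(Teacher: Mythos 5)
Your proof is correct and, at its core, takes the same approach as the paper: both directions are read off by unpacking the reflection identity $L_t = X_t - I_t$ with $X_t = N_\lambda(t) - N_\mu(t)$, $I_t = \inf_{0\le s\le t} X_s$, together with the a.s.\ unit-jump structure of $X$. Your version is arguably cleaner — you argue directly that $I_T = I_{T-}-1$ forces $X_{T-} = I_{T-}$ and $X_T = I_T$, whereas the paper establishes the forward implication via the inequality chain $0\le L_{T-} = L_T \le (L_{T-}-1)\vee 0$ — and you also spell out the converse, which the paper leaves to the reader.
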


	\begin{proof} We prove the direct implication. 
	Since the infimum decreases by $1$ at time $T$ by the assumption, we must have that 
	$N_{\mu}(T)$ increased by 1 at time $T$, i.e. $T$ is a renewal point of $N_{\mu}$. Then we have that 
	\begin{align*}
		0 \le L_{T-} &= \left(N_{\lambda}(T-) - N_{\mu}(T-)\right) 
		 - \inf_{0 \le s \le T-}\left\{N_{\lambda}(s) - N_{\mu}(s) \right\}\\
		 &=  N_{\lambda}(T) - (N_{\mu}(T)-1) - \left( \inf_{0 \le s \le T}\left\{N_{\lambda}(s) - N_{\mu}(s) \right\} +1\right)\\
		 &=  N_{\lambda}(T) - N_{\mu}(T) - \inf_{0 \le s \le T}\left\{N_{\lambda}(s) - N_{\mu}(s) \right\} = L_T\le (L_{T-}-1)\vee0.
	\end{align*}
	The last inequality holds because we know that the departure process has an event. 
	But then, for the whole string of inequalities to hold, it must be that $L_T- = 0$. 
	
	The converse implication is left to the reader, but the approach can be found 
	in the proof of \eqref{two queue relation} in Appendix \ref{sec:A}.
	\end{proof}

	\begin{proof}[Proof of Theorem \ref{thm:Rec+Trans}] 
	
	First consider the case $\alpha_1 < \alpha_2$. The proof is independent of the time scalings 
	$\lambda$ and $\mu$; what is important is the tail exponent.  
	
	We begin with the following estimate which will be valid for any $s>0$.
		\begin{align*}
			\P\Big\{ \inf_{0 \le s \le t}\{N^{(\alpha_1)}(s) &- N^{(\alpha_2)}(s)\} > -t^{\alpha_1/2} \Big\}\le \P\big\{N^{(\alpha_1)}(t) - N^{(\alpha_2)}(t) > -t^{\alpha_1/2} \big\}\\
			&\le e^{st^{\alpha_1/2}}\E(e^{sN^{(\alpha_1)}(t)})\E(e^{-sN^{(\alpha_2)}(t)}) \text{ via a Chernoff bound,}\\
			&= e^{st^{\alpha_1/2}}E_{\alpha_1,1}((e^{s} -1)t^{\alpha_1})E_{\alpha_2,1}((e^{-s} - 1)t^{\alpha_2}).
		\end{align*} 
		At this point we make a particular choice for $s$ since we can choose any positive value for it. 
		To have a bound that tends to 0 as $t \to \infty$, set $s = t^{-\alpha_1}$. Then we obtain 
		\begin{align*}
		\P\Big\{ \inf_{0 \le s \le t}\{N^{(\alpha_1)}(t) &- N^{(\alpha_2)}(t)\} > -t^{\alpha_1/2} \Big\}\\
		&\le e^{t^{-\alpha_1/2}}E_{\alpha_1,1}((e^{t^{-\alpha_1}} -1)t^{\alpha_1})E_{\alpha_2,1}((e^{-t^{-\alpha_1}} - 1)t^{\alpha_2})\\
		&= e^{t^{-\alpha_1/2}} E_{\alpha_1,1}( 1+ O(-t^{-\alpha_1}))E_{\alpha_2,1}( -t^{\alpha_2 - \alpha_1}+ o(t^{\alpha_2 - \alpha_1})) \\
		&\le C \frac{1}{t^{\alpha_2-\alpha_1}}, \quad \text{ for $t$ large enough}.
		\end{align*}
	To see the last inequality, observe that as $t$ grows, $e^{t^{-\alpha_1/2}}$ converges to 1 and $E_{\alpha_1,1}( 1+ O(-t^{-\alpha_1}))$ converges by continuity to $E_{\alpha_1,1}( 1))$. Therefore both of these terms are bounded by some constant $C_1$ for large $t$. The third term is the one who dictates the behaviour at infinity. For $t$ large enough, and by the monotonicity of $E_{\alpha_2,1}$ we have
	\[ 
	E_{\alpha_2,1}( -t^{\alpha_2 - \alpha_1}+ o(t^{\alpha_2 - \alpha_1}))  \le E_{\alpha_2,1}\Big( -\frac{1}{2}\Big(t^{\frac{\alpha_2 - \alpha_1}{\alpha_2}}\Big)^{\alpha_2}\Big) \le \frac{C_2}{\Big(t^{\frac{\alpha_2 - \alpha_1}{\alpha_2}}\Big)^{\alpha_2}}= \frac{C_2}{t^{\alpha_2 - \alpha_1}}.
	\]
	Then, define the events  
	\[
	\mathcal A_n = \Big\{ \inf_{0 \le s \le n}\{N^{(\alpha_1)}(s) - N^{(\alpha_2)}(s)\} \le -n^{\alpha_1/2} \Big\}, \quad \P\{\mathcal A_n\} \ge 1 - \frac{C}{n^{\alpha_2 - \alpha_1}}. 
	\]	
	We compute 
	\begin{align*}
		\P\{ \mathcal A_n \text{ i.o.} \} &= \P\Big\{ \bigcap_{k=1}^{\infty} \bigcup_{n=k}^{\infty}\mathcal A_n  \Big\} = \lim_{k\to \infty} \P\Big\{\bigcup_{n=k}^{\infty}\mathcal A_n  \Big\} \ge \lim_{k\to \infty} \P\Big\{\mathcal A_k \Big\}\\
		&\ge \lim_{k\to \infty} \Big(1 - \frac{C}{k^{\alpha_2 - \alpha_1}} \Big) =1. 
	\end{align*}
	Therefore, on a full probability event, we can find a sequence of integers $\{ k_j \}_{j \ge 1}$ so that 
	\[ 
	f(j) = \inf_{0 \le s \le k_j }\{N^{(\alpha_1)}(s) - N^{(\alpha_2)}(s)\} 
	\]
	is a strictly decreasing integer function. As such, by Lemma \ref{lem:useful} 
	we know that we can find an infinite sequence of times $t_j$
	 for which the departure process had an unused service time and that happens 
	 if and only if the queue length was already zero. Therefore the queue length becomes zero 
	 infinitely often with probability 1. 
	
	For the case $\alpha_1 > \alpha_2$ we reason similarly, but for $s <0$.
	\begin{align*}
		 \P\big\{N^{(\alpha_1)}(t) - N^{(\alpha_2)}(t) < t^{\alpha_2/2} \big\}
			&\le e^{-st^{\alpha_2/2}}\E(e^{sN^{(\alpha_1)}(t)})\E(e^{-sN^{(\alpha_2)}(t)}) \text{ via a Chernoff bound,}\\
			&= e^{-st^{\alpha_2/2}}E_{\alpha_1,1}((e^{s} -1)t^{\alpha_1})E_{\alpha_2,1}((e^{-s} - 1)t^{\alpha_2}).
	\end{align*} 
		At this point we make a particular choice for $s$ since we can choose any positive value for it. 
		To have a bound that tends to 0 as $t \to \infty$, set $s = -t^{-\alpha_2}$. Then we obtain 
		\begin{align*}
		\P\Big\{N^{(\alpha_1)}(t) &- N^{(\alpha_2)}(t)  < t^{\alpha_2/2} \Big\}\\
		&\le e^{t^{-\alpha_2/2}}E_{\alpha_2,1}((e^{t^{-\alpha_2}} -1)t^{\alpha_2})E_{\alpha_1,1}((e^{-t^{-\alpha_2}} - 1)t^{\alpha_1})\\
		&\le e^{t^{-\alpha_2/2}} E_{\alpha_2,1}( 1+ O(-t^{-\alpha_2}))E_{\alpha_1,1}( -t^{\alpha_1 - \alpha_2}+ o(t^{\alpha_1 - \alpha_2})) \\
		&\le C \frac{1}{t^{\alpha_1-\alpha_2}}, \quad \text{ for $t$ large enough}.
		\end{align*}
		
		In order to conclude, use the fact that 
		$
		L^{\alpha_1, \alpha_2} (t) \ge N^{\alpha_1}(t) - N^{\alpha_2}(t),
		$
		which gives 
		\[
		\P\{ L^{\alpha_1, \alpha_2} (t) \ge t^{\alpha/2} \} \ge 1-  \frac{C}{t^{\alpha_1-\alpha_2}}.
		\]
		Finally, for any $n \in \N$, define 
		\[\mathcal B_n = \{  L^{\alpha_1, \alpha_2} (n) \ge n^{\alpha/2} \}\] and notice that 
		\be
		\P\{ \varlimsup_{t\to \infty} L^{\alpha_1, \alpha_2}(t) = \infty \} \ge \P\{ \mathcal B_n \text { i.o. }\}
		\ge \lim_{n\to \infty} \Big(1 - \frac{C}{n^{\alpha_1 - \alpha_2}}\Big) = 1. \qedhere
		\ee
	\end{proof}
	
	\subsection{The proof of Theorem \ref{thm:Rec+Trans=Balanced}}
	The situation in Theorem \ref{thm:Rec+Trans=Balanced} requires a slightly more delicate approach 
	and two coupling arguments which we present first. We begin with a construction of a `sped-up' 
	queue which also provides a regeneration time for the dynamics.
	
	Let $N_a(t)$ denote the arrival counting process and $N_d(t)$ denote the departure counting process. 
	Let $L_t$ denote the corresponding queue length. Assume that at time $T$ we have a departure event, 
	and assume that the next arrival event is at time $T+ \eta$. 
	Define new processes 
	\[
	\widetilde N_{a}(t) = 
	\begin{cases}
	N_a(t), & t < T-,\\ 
	N_{a}(\eta+t), & t \ge T
	\end{cases}
	\]
	which just speeds up the arrival process by $\eta$. Then we define a new queue length at time $T$
	\be \label{eq:coupleq}
	\widetilde L_t = 
	\begin{cases}
	L_t, & t \le T-\\
	1+ \widetilde N_{a}(t) - N_{d}(t)
		 - \inf_{0 \le s \le t}\left\{\widetilde N_{a}(s) - N_{d}(s) \right\}, & t \ge T.
	\end{cases}
	\ee	
	This just says that at time $T$ the departure event happened before 
	we brought forward the next departure event. Note that $\widetilde L_{t-T}, t \ge T$ 
	has the same distribution as $L_t$ with an initial condition of $\widetilde L_T = L_T+1$. 
	
	\begin{lemma}\label{lem:coupling}
	Consider the queue $\widetilde L_t$ defined in \eqref{eq:coupleq}. 
	Let $T_0$ be the departure time event after which we use $\widetilde N_{a}(t)$ and 
	let $T_1$ be the first unused service time after $T_0$ for the process $L_t$. 
	(a priori $T_1$ could be infinity, and that is also fine). Then, 
	\be\label{eq:coupineq}
	L_t \le   \widetilde L_t, \quad \text{ for all } t < T_1.
	\ee
	\end{lemma}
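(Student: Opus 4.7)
My plan is to derive the bound directly from the Lindley representations of $L$ and $\widetilde L$, exploiting the fact that, by the definition of $T_1$ together with Lemma \ref{lem:useful}, the running infimum controlling $L$ is frozen on the interval $[T_0, T_1)$. For $t < T_0$ the inequality is an equality by construction, and if $L_{T_0-}=0$ then the service at $T_0$ is already unused in $L$, so $T_1 = T_0$ and the range $\{t < T_1\}$ reduces to the trivial case; hence one may assume $L_{T_0-}\ge 1$ and focus on $t \in [T_0, T_1)$.

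Introduce the net input processes $D_s = N_a(s) - N_d(s)$ and $\widetilde D_s = \widetilde N_a(s) - N_d(s)$, together with the running infima $m_t = \inf_{0 \le s \le t} D_s$ and $\widetilde m_t = \inf_{0 \le s \le t} \widetilde D_s$, so that $L_t = D_t - m_t$ while $\widetilde L_t = 1 + \widetilde D_t - \widetilde m_t$ for $t \ge T_0$. From $\widetilde N_a(s) = N_a(s)$ for $s < T_0$ and $\widetilde N_a(s) = N_a(s+\eta) \ge N_a(s)$ for $s \ge T_0$ one obtains the pointwise domination $\widetilde D_s \ge D_s$, with equality on $\{s < T_0\}$.

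The crux of the argument is Lemma \ref{lem:useful}: unused services of $L$ are in bijection with the downward jumps of the infimum process, so the absence of unused services on $[T_0, T_1)$ translates into $m_s = m_{T_0-}$ throughout this interval (and in particular $m_{T_0}=m_{T_0-}$, using the standing assumption $L_{T_0-}\ge 1$). Consequently $D_s \ge m_{T_0-}$ on $[T_0, T_1)$, and combined with $\widetilde D_s \ge D_s$ this yields $\widetilde D_s \ge m_{T_0-}$ on the same interval; since $\widetilde D_s = D_s$ for $s < T_0$, the infimum over $[0, T_0)$ is also $m_{T_0-}$. Putting the two ranges together gives $\widetilde m_t = m_{T_0-} = m_t$ for every $t \in [T_0, T_1)$. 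Substituting back,
\[
\widetilde L_t - L_t = 1 + \bigl(\widetilde N_a(t) - N_a(t)\bigr) \ge 1,
\]
which establishes \eqref{eq:coupineq}.

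The only delicate step in this plan is the claim that $m$ stays constant on $[T_0, T_1)$, and this is exactly where Lemma \ref{lem:useful} is decisive; the rest is formal manipulation of the Lindley representation together with the coupling identity $\widetilde N_a(\cdot) = N_a(\cdot + \eta)$.
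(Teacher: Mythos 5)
Your proof takes a genuinely different route from the paper. The paper proves the inequality by induction over departure events, tracking both queue lengths event by event. You instead work directly with the Lindley representations $L_t = D_t - m_t$ and $\widetilde L_t = 1 + \widetilde D_t - \widetilde m_t$, observe the pointwise domination $\widetilde D \ge D$ with equality before $T_0$, and use Lemma~\ref{lem:useful} to conclude that the running infimum $m$ is frozen at $m_{T_0-}$ on $[T_0,T_1)$; from $\widetilde D_s = D_s$ on $[0,T_0)$ and $\widetilde D_s \ge D_s \ge m_{T_0-}$ on $[T_0,t]$ you get $\widetilde m_t = m_t$, and the conclusion is a one-line computation. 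This is a cleaner mechanism than the paper's event-by-event bookkeeping and makes the role of Lemma~\ref{lem:useful} fully transparent.

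There is, however, a gap in the reduction to $L_{T_0-}\ge 1$. You dismiss $L_{T_0-}=0$ by asserting $T_1 = T_0$, which would make $[T_0,T_1)$ empty. That is not the paper's reading: its own proof treats $L_{T_0-}=0$ at time $T_0$ explicitly (showing $L_{T_0}=0 < 1 = \widetilde L_{T_0}$) and then continues past $T_0$, and in the application inside the proof of Theorem~\ref{thm:Rec+Trans=Balanced} the time $T_0$ is chosen precisely so that $L_{T_0-}=0$. So $T_1$ is the first unused service \emph{strictly} after $T_0$, and $[T_0,T_1)$ is nonempty. Under that reading your claim $m_s = m_{T_0-}$ on $[T_0,T_1)$ fails when $L_{T_0-}=0$: the service at $T_0$ is itself unused, so $m$ drops there and $m_s = m_{T_0-}-1$ on $[T_0,T_1)$. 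The repair is short. Since the infimum of $\widetilde D$ over $[0,T_0)$ already equals $m_{T_0-}$ and $\widetilde D_s \ge D_s \ge m_{T_0-}-1$ on $[T_0,t]$, we have $m_{T_0-}-1 \le \widetilde m_t \le m_{T_0-}$, hence $\widetilde m_t - m_t \le 1$, and
\[
\widetilde L_t - L_t = 1 + \bigl(\widetilde N_a(t) - N_a(t)\bigr) - \bigl(\widetilde m_t - m_t\bigr) \ge 1 + 0 - 1 = 0.
\]
You should carry this case through rather than appealing to $T_1=T_0$, since it is exactly the case the lemma is used for.
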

	
	Intuitively the lemma is clear, since we are just speeding up the arrival process by the random value 
	$\eta$, however one should acknowledge the possibility that by doing so, a
	 customer who appeared earlier may have benefited for a previously unused service event, 
	 thus making the second queue length shorter. Indeed by the construction 
	 this will  not happen until the first unused departure time
	 and this is shown  in the next proof. See also Figure \ref{fig:coupledQ} for a pictorial explanation.  
	 
	 \begin{figure}[h]
	 \begin{center}

\tikzset{every picture/.style={line width=0.75pt}} %set default line width to 0.75pt        

\begin{tikzpicture}[x=0.75pt,y=0.75pt,yscale=-1,xscale=1]
%uncomment if require: \path (0,2641); %set diagram left start at 0, and has height of 2641

%Straight Lines [id:da7376439309817178] 
\draw    (39,2089.9) -- (618,2089.9) ;
%Straight Lines [id:da9828762357547279] 
\draw    (40,2133.76) -- (619,2133.76) ;
%Straight Lines [id:da17661595116794593] 
\draw    (44,2272.94) -- (623,2272.94) ;
%Straight Lines [id:da8118636946334784] 
\draw  [dash pattern={on 4.5pt off 4.5pt}]  (87.5,2089.48) -- (87.5,2272.1) ;
%Straight Lines [id:da36068642454696687] 
\draw  [dash pattern={on 4.5pt off 4.5pt}]  (420.5,2135.02) -- (420.5,2272.1) ;
%Straight Lines [id:da8583768502503711] 
\draw  [dash pattern={on 4.5pt off 4.5pt}]  (329.5,2133.76) -- (329.5,2240.89) ;
%Straight Lines [id:da6521894243083355] 
\draw  [dash pattern={on 4.5pt off 4.5pt}]  (314.5,2090.32) -- (314.5,2240.04) ;
%Straight Lines [id:da7738140859020762] 
\draw  [dash pattern={on 4.5pt off 4.5pt}]  (290,2074.71) -- (290,2271.25) ;
%Straight Lines [id:da3928908835388867] 
\draw  [dash pattern={on 4.5pt off 4.5pt}]  (260.5,2134.18) -- (262,2271.25) ;
%Straight Lines [id:da46403236510645096] 
\draw  [dash pattern={on 4.5pt off 4.5pt}]  (206.5,2134.18) -- (206.5,2240.04) ;
%Straight Lines [id:da04759259593247733] 
\draw  [dash pattern={on 4.5pt off 4.5pt}]  (170.5,2089.48) -- (170.5,2240.04) ;
%Straight Lines [id:da6794571401236426] 
\draw    (206.5,2074.71) -- (206.5,2148.94) ;
%Straight Lines [id:da020885302737864597] 
\draw  [dash pattern={on 4.5pt off 4.5pt}]  (564.5,2090.32) -- (564.5,2271.25) ;
%Straight Lines [id:da23891574203047228] 
\draw  [dash pattern={on 4.5pt off 4.5pt}]  (511.5,2135.02) -- (511.5,2272.1) ;
%Straight Lines [id:da14935303414264378] 
\draw  [dash pattern={on 4.5pt off 4.5pt}]  (466.5,2135.02) -- (466.5,2272.94) ;
%Shape: Ellipse [id:dp649566856484766] 
\draw  [fill={rgb, 255:red, 74; green, 144; blue, 226 }  ,fill opacity=1 ] (254,2134.18) .. controls (254,2131.15) and (256.91,2128.7) .. (260.5,2128.7) .. controls (264.09,2128.7) and (267,2131.15) .. (267,2134.18) .. controls (267,2137.21) and (264.09,2139.66) .. (260.5,2139.66) .. controls (256.91,2139.66) and (254,2137.21) .. (254,2134.18) -- cycle ;
%Shape: Ellipse [id:dp515564014268853] 
\draw  [fill={rgb, 255:red, 245; green, 166; blue, 35 }  ,fill opacity=1 ] (81,2089.48) .. controls (81,2086.45) and (83.91,2083.99) .. (87.5,2083.99) .. controls (91.09,2083.99) and (94,2086.45) .. (94,2089.48) .. controls (94,2092.5) and (91.09,2094.96) .. (87.5,2094.96) .. controls (83.91,2094.96) and (81,2092.5) .. (81,2089.48) -- cycle ;
%Shape: Ellipse [id:dp5805438576270033] 
\draw  [fill={rgb, 255:red, 245; green, 166; blue, 35 }  ,fill opacity=1 ] (558,2090.32) .. controls (558,2087.29) and (560.91,2084.84) .. (564.5,2084.84) .. controls (568.09,2084.84) and (571,2087.29) .. (571,2090.32) .. controls (571,2093.35) and (568.09,2095.8) .. (564.5,2095.8) .. controls (560.91,2095.8) and (558,2093.35) .. (558,2090.32) -- cycle ;
%Shape: Ellipse [id:dp7172733550592417] 
\draw  [fill={rgb, 255:red, 74; green, 144; blue, 226 }  ,fill opacity=1 ] (505,2135.02) .. controls (505,2132) and (507.91,2129.54) .. (511.5,2129.54) .. controls (515.09,2129.54) and (518,2132) .. (518,2135.02) .. controls (518,2138.05) and (515.09,2140.51) .. (511.5,2140.51) .. controls (507.91,2140.51) and (505,2138.05) .. (505,2135.02) -- cycle ;
%Shape: Ellipse [id:dp5308932339122279] 
\draw  [fill={rgb, 255:red, 74; green, 144; blue, 226 }  ,fill opacity=1 ] (460,2135.02) .. controls (460,2132) and (462.91,2129.54) .. (466.5,2129.54) .. controls (470.09,2129.54) and (473,2132) .. (473,2135.02) .. controls (473,2138.05) and (470.09,2140.51) .. (466.5,2140.51) .. controls (462.91,2140.51) and (460,2138.05) .. (460,2135.02) -- cycle ;
%Shape: Ellipse [id:dp8443591353805169] 
\draw  [fill={rgb, 255:red, 74; green, 144; blue, 226 }  ,fill opacity=1 ] (414,2135.02) .. controls (414,2132) and (416.91,2129.54) .. (420.5,2129.54) .. controls (424.09,2129.54) and (427,2132) .. (427,2135.02) .. controls (427,2138.05) and (424.09,2140.51) .. (420.5,2140.51) .. controls (416.91,2140.51) and (414,2138.05) .. (414,2135.02) -- cycle ;
%Shape: Ellipse [id:dp47997055523054] 
\draw  [fill={rgb, 255:red, 245; green, 166; blue, 35 }  ,fill opacity=1 ] (308,2090.32) .. controls (308,2087.29) and (310.91,2084.84) .. (314.5,2084.84) .. controls (318.09,2084.84) and (321,2087.29) .. (321,2090.32) .. controls (321,2093.35) and (318.09,2095.8) .. (314.5,2095.8) .. controls (310.91,2095.8) and (308,2093.35) .. (308,2090.32) -- cycle ;
%Shape: Ellipse [id:dp33467797220076245] 
\draw  [fill={rgb, 255:red, 74; green, 144; blue, 226 }  ,fill opacity=1 ] (323,2133.76) .. controls (323,2130.73) and (325.91,2128.28) .. (329.5,2128.28) .. controls (333.09,2128.28) and (336,2130.73) .. (336,2133.76) .. controls (336,2136.79) and (333.09,2139.24) .. (329.5,2139.24) .. controls (325.91,2139.24) and (323,2136.79) .. (323,2133.76) -- cycle ;
%Shape: Ellipse [id:dp47574413231772716] 
\draw  [fill={rgb, 255:red, 245; green, 166; blue, 35 }  ,fill opacity=1 ] (284,2090.32) .. controls (284,2087.29) and (286.91,2084.84) .. (290.5,2084.84) .. controls (294.09,2084.84) and (297,2087.29) .. (297,2090.32) .. controls (297,2093.35) and (294.09,2095.8) .. (290.5,2095.8) .. controls (286.91,2095.8) and (284,2093.35) .. (284,2090.32) -- cycle ;
%Shape: Ellipse [id:dp961364647050931] 
\draw  [fill={rgb, 255:red, 245; green, 166; blue, 35 }  ,fill opacity=1 ] (164,2089.48) .. controls (164,2086.45) and (166.91,2083.99) .. (170.5,2083.99) .. controls (174.09,2083.99) and (177,2086.45) .. (177,2089.48) .. controls (177,2092.5) and (174.09,2094.96) .. (170.5,2094.96) .. controls (166.91,2094.96) and (164,2092.5) .. (164,2089.48) -- cycle ;
%Shape: Ellipse [id:dp306243501460312] 
\draw  [fill={rgb, 255:red, 74; green, 144; blue, 226 }  ,fill opacity=1 ] (200,2134.18) .. controls (200,2131.15) and (202.91,2128.7) .. (206.5,2128.7) .. controls (210.09,2128.7) and (213,2131.15) .. (213,2134.18) .. controls (213,2137.21) and (210.09,2139.66) .. (206.5,2139.66) .. controls (202.91,2139.66) and (200,2137.21) .. (200,2134.18) -- cycle ;
%Straight Lines [id:da9974912883096999] 
\draw [color={rgb, 255:red, 189; green, 16; blue, 224 }  ,draw opacity=1 ][line width=2.25]    (44,2272.94) -- (90,2272.1) ;
%Straight Lines [id:da812531463605701] 
\draw [color={rgb, 255:red, 189; green, 16; blue, 224 }  ,draw opacity=1 ][line width=2.25]    (317,2214.74) -- (330,2214.74) ;
%Straight Lines [id:da4368647181242761] 
\draw [color={rgb, 255:red, 189; green, 16; blue, 224 }  ,draw opacity=1 ][line width=2.25]    (291,2240.04) -- (316,2240.04) ;
%Straight Lines [id:da7955464853223231] 
\draw [color={rgb, 255:red, 189; green, 16; blue, 224 }  ,draw opacity=1 ][line width=2.25]    (262,2271.25) -- (290,2271.25) ;
%Straight Lines [id:da9472317249505792] 
\draw [color={rgb, 255:red, 189; green, 16; blue, 224 }  ,draw opacity=1 ][line width=2.25]    (206.5,2240.04) -- (261,2240.04) ;
%Straight Lines [id:da22066612442102196] 
\draw [color={rgb, 255:red, 189; green, 16; blue, 224 }  ,draw opacity=1 ][line width=2.25]    (171,2214.74) -- (206,2214.74) ;
%Straight Lines [id:da6077303147394894] 
\draw [color={rgb, 255:red, 189; green, 16; blue, 224 }  ,draw opacity=1 ][line width=2.25]    (88,2240.04) -- (170.5,2240.04) ;
%Straight Lines [id:da5780142480843521] 
\draw [color={rgb, 255:red, 189; green, 16; blue, 224 }  ,draw opacity=1 ][line width=2.25]    (564,2240.89) -- (623,2240.89) ;
%Straight Lines [id:da7147771976065415] 
\draw [color={rgb, 255:red, 189; green, 16; blue, 224 }  ,draw opacity=1 ][line width=2.25]    (422,2272.1) -- (567,2272.1) ;
%Straight Lines [id:da38309963081452325] 
\draw [color={rgb, 255:red, 189; green, 16; blue, 224 }  ,draw opacity=1 ][line width=2.25]    (333,2240.89) -- (422,2240.89) ;
%Straight Lines [id:da32017329872796263] 
\draw    (207,2074.71) -- (289,2074.71) ;
%Straight Lines [id:da8911234538794732] 
\draw    (43,2358.9) -- (622,2358.9) ;
%Straight Lines [id:da33371615742795224] 
\draw    (44,2402.76) -- (623,2402.76) ;
%Straight Lines [id:da6279923615487448] 
\draw    (48,2541.94) -- (627,2541.94) ;
%Straight Lines [id:da7940656941020228] 
\draw  [dash pattern={on 4.5pt off 4.5pt}]  (91.5,2358.48) -- (91.5,2541.1) ;
%Straight Lines [id:da442300535381157] 
\draw  [dash pattern={on 4.5pt off 4.5pt}]  (424.5,2404.02) -- (424.5,2541.1) ;
%Straight Lines [id:da34707673018987295] 
\draw  [dash pattern={on 4.5pt off 4.5pt}]  (333.5,2402.76) -- (333.5,2509.89) ;
%Straight Lines [id:da5946542082400275] 
\draw  [dash pattern={on 4.5pt off 4.5pt}]  (235.5,2359.32) -- (235.5,2483.74) ;
%Straight Lines [id:da6424485532891653] 
\draw  [dash pattern={on 4.5pt off 4.5pt}]  (211.5,2359.32) -- (211.5,2483.74) ;
%Straight Lines [id:da06188032205368588] 
\draw  [dash pattern={on 4.5pt off 4.5pt}]  (264.5,2403.18) -- (266,2540.25) ;
%Straight Lines [id:da5519118420943594] 
\draw  [dash pattern={on 4.5pt off 4.5pt}]  (174.5,2358.48) -- (174.5,2509.04) ;
%Straight Lines [id:da4632904982977313] 
\draw  [dash pattern={on 4.5pt off 4.5pt}]  (210.5,2358) -- (210.5,2417.94) ;
%Straight Lines [id:da27514475659124693] 
\draw  [dash pattern={on 4.5pt off 4.5pt}]  (485.5,2359.32) -- (485.5,2540.25) ;
%Straight Lines [id:da013444351108632735] 
\draw  [dash pattern={on 4.5pt off 4.5pt}]  (515.5,2404.02) -- (515.5,2541.1) ;
%Straight Lines [id:da3156002635612787] 
\draw  [dash pattern={on 4.5pt off 4.5pt}]  (470.5,2404.02) -- (470.5,2541.94) ;
%Shape: Ellipse [id:dp8766861695177105] 
\draw  [fill={rgb, 255:red, 74; green, 144; blue, 226 }  ,fill opacity=1 ] (258,2403.18) .. controls (258,2400.15) and (260.91,2397.7) .. (264.5,2397.7) .. controls (268.09,2397.7) and (271,2400.15) .. (271,2403.18) .. controls (271,2406.21) and (268.09,2408.66) .. (264.5,2408.66) .. controls (260.91,2408.66) and (258,2406.21) .. (258,2403.18) -- cycle ;
%Shape: Ellipse [id:dp4896145861083393] 
\draw  [fill={rgb, 255:red, 245; green, 166; blue, 35 }  ,fill opacity=1 ] (85,2358.48) .. controls (85,2355.45) and (87.91,2352.99) .. (91.5,2352.99) .. controls (95.09,2352.99) and (98,2355.45) .. (98,2358.48) .. controls (98,2361.5) and (95.09,2363.96) .. (91.5,2363.96) .. controls (87.91,2363.96) and (85,2361.5) .. (85,2358.48) -- cycle ;
%Shape: Ellipse [id:dp32109496980889396] 
\draw  [fill={rgb, 255:red, 245; green, 166; blue, 35 }  ,fill opacity=1 ] (479,2359.32) .. controls (479,2356.29) and (481.91,2353.84) .. (485.5,2353.84) .. controls (489.09,2353.84) and (492,2356.29) .. (492,2359.32) .. controls (492,2362.35) and (489.09,2364.8) .. (485.5,2364.8) .. controls (481.91,2364.8) and (479,2362.35) .. (479,2359.32) -- cycle ;
%Shape: Ellipse [id:dp9481964405198906] 
\draw  [fill={rgb, 255:red, 74; green, 144; blue, 226 }  ,fill opacity=1 ] (509,2404.02) .. controls (509,2401) and (511.91,2398.54) .. (515.5,2398.54) .. controls (519.09,2398.54) and (522,2401) .. (522,2404.02) .. controls (522,2407.05) and (519.09,2409.51) .. (515.5,2409.51) .. controls (511.91,2409.51) and (509,2407.05) .. (509,2404.02) -- cycle ;
%Shape: Ellipse [id:dp4911900110454871] 
\draw  [fill={rgb, 255:red, 74; green, 144; blue, 226 }  ,fill opacity=1 ] (464,2404.02) .. controls (464,2401) and (466.91,2398.54) .. (470.5,2398.54) .. controls (474.09,2398.54) and (477,2401) .. (477,2404.02) .. controls (477,2407.05) and (474.09,2409.51) .. (470.5,2409.51) .. controls (466.91,2409.51) and (464,2407.05) .. (464,2404.02) -- cycle ;
%Shape: Ellipse [id:dp6498882568024152] 
\draw  [fill={rgb, 255:red, 74; green, 144; blue, 226 }  ,fill opacity=1 ] (418,2404.02) .. controls (418,2401) and (420.91,2398.54) .. (424.5,2398.54) .. controls (428.09,2398.54) and (431,2401) .. (431,2404.02) .. controls (431,2407.05) and (428.09,2409.51) .. (424.5,2409.51) .. controls (420.91,2409.51) and (418,2407.05) .. (418,2404.02) -- cycle ;
%Shape: Ellipse [id:dp37325268091866937] 
\draw  [fill={rgb, 255:red, 245; green, 166; blue, 35 }  ,fill opacity=1 ] (229,2359.32) .. controls (229,2356.29) and (231.91,2353.84) .. (235.5,2353.84) .. controls (239.09,2353.84) and (242,2356.29) .. (242,2359.32) .. controls (242,2362.35) and (239.09,2364.8) .. (235.5,2364.8) .. controls (231.91,2364.8) and (229,2362.35) .. (229,2359.32) -- cycle ;
%Shape: Ellipse [id:dp8268728927511488] 
\draw  [fill={rgb, 255:red, 74; green, 144; blue, 226 }  ,fill opacity=1 ] (327,2402.76) .. controls (327,2399.73) and (329.91,2397.28) .. (333.5,2397.28) .. controls (337.09,2397.28) and (340,2399.73) .. (340,2402.76) .. controls (340,2405.79) and (337.09,2408.24) .. (333.5,2408.24) .. controls (329.91,2408.24) and (327,2405.79) .. (327,2402.76) -- cycle ;
%Shape: Ellipse [id:dp8951873523522245] 
\draw  [fill={rgb, 255:red, 245; green, 166; blue, 35 }  ,fill opacity=1 ] (205,2359.32) .. controls (205,2356.29) and (207.91,2353.84) .. (211.5,2353.84) .. controls (215.09,2353.84) and (218,2356.29) .. (218,2359.32) .. controls (218,2362.35) and (215.09,2364.8) .. (211.5,2364.8) .. controls (207.91,2364.8) and (205,2362.35) .. (205,2359.32) -- cycle ;
%Shape: Ellipse [id:dp26723006477267375] 
\draw  [fill={rgb, 255:red, 245; green, 166; blue, 35 }  ,fill opacity=1 ] (168,2358.48) .. controls (168,2355.45) and (170.91,2352.99) .. (174.5,2352.99) .. controls (178.09,2352.99) and (181,2355.45) .. (181,2358.48) .. controls (181,2361.5) and (178.09,2363.96) .. (174.5,2363.96) .. controls (170.91,2363.96) and (168,2361.5) .. (168,2358.48) -- cycle ;
%Shape: Ellipse [id:dp6769266510965922] 
\draw  [fill={rgb, 255:red, 74; green, 144; blue, 226 }  ,fill opacity=1 ] (204,2403.18) .. controls (204,2400.15) and (206.91,2397.7) .. (210.5,2397.7) .. controls (214.09,2397.7) and (217,2400.15) .. (217,2403.18) .. controls (217,2406.21) and (214.09,2408.66) .. (210.5,2408.66) .. controls (206.91,2408.66) and (204,2406.21) .. (204,2403.18) -- cycle ;
%Straight Lines [id:da6198473453563604] 
\draw [color={rgb, 255:red, 189; green, 16; blue, 224 }  ,draw opacity=1 ][line width=2.25]    (48,2541.94) -- (94,2541.1) ;
%Straight Lines [id:da8209283445050956] 
\draw [color={rgb, 255:red, 189; green, 16; blue, 224 }  ,draw opacity=1 ][line width=2.25]    (515.5,2541.1) -- (627,2541.94) ;
%Straight Lines [id:da13030738085697224] 
\draw [color={rgb, 255:red, 189; green, 16; blue, 224 }  ,draw opacity=1 ][line width=2.25]    (487.5,2509.74) -- (512.5,2509.74) ;
%Straight Lines [id:da48001732912257566] 
\draw [color={rgb, 255:red, 189; green, 16; blue, 224 }  ,draw opacity=1 ][line width=2.25]    (264,2483.25) -- (332,2483.25) ;
%Straight Lines [id:da12659364405591733] 
\draw [color={rgb, 255:red, 189; green, 16; blue, 224 }  ,draw opacity=1 ][line width=2.25]    (210,2483.74) -- (236,2483.74) ;
%Straight Lines [id:da8506491706668164] 
\draw [color={rgb, 255:red, 189; green, 16; blue, 224 }  ,draw opacity=1 ][line width=2.25]    (175,2483.74) -- (210,2483.74) ;
%Straight Lines [id:da23478924303675508] 
\draw [color={rgb, 255:red, 189; green, 16; blue, 224 }  ,draw opacity=1 ][line width=2.25]    (92,2509.04) -- (174.5,2509.04) ;
%Straight Lines [id:da09513275119726483] 
\draw [color={rgb, 255:red, 189; green, 16; blue, 224 }  ,draw opacity=1 ][line width=2.25]    (424.5,2541.94) -- (485.5,2541.94) ;
%Straight Lines [id:da6288817357716083] 
\draw [color={rgb, 255:red, 189; green, 16; blue, 224 }  ,draw opacity=1 ][line width=2.25]    (337,2509.89) -- (426,2509.89) ;
%Straight Lines [id:da8642885207025678] 
\draw [color={rgb, 255:red, 189; green, 16; blue, 224 }  ,draw opacity=1 ][line width=2.25]    (237,2452.74) -- (263,2452.74) ;

% Text Node
\draw (12,2076.21) node [anchor=north west][inner sep=0.75pt]    {$N_{a}$};
% Text Node
\draw (13,2119.23) node [anchor=north west][inner sep=0.75pt]    {$N_{d}$};
% Text Node
\draw (13,2258.56) node [anchor=north west][inner sep=0.75pt]    {$L$};
% Text Node
\draw (189,2137.1) node [anchor=north west][inner sep=0.75pt]    {$T_0$};
% Text Node
\draw (443,2137.91) node [anchor=north west][inner sep=0.75pt]    {$ \begin{array}{l}
T_{1}\\
\end{array}$};
% Text Node
\draw (245,2049.44) node [anchor=north west][inner sep=0.75pt]    {$\eta $};
% Text Node
\draw (16,2345.21) node [anchor=north west][inner sep=0.75pt]    {$\widetilde{N_{a}}$};
% Text Node
\draw (17,2388.23) node [anchor=north west][inner sep=0.75pt]    {$N_{d}$};
% Text Node
\draw (17,2527.56) node [anchor=north west][inner sep=0.75pt]    {$L$};
% Text Node
\draw (193,2406.1) node [anchor=north west][inner sep=0.75pt]    {$T_0$};
% Text Node
\draw (447,2406.91) node [anchor=north west][inner sep=0.75pt]    {$ \begin{array}{l}
T_{1}\\
\end{array}$};

\end{tikzpicture}
	\end{center}
	\caption{Schematic for the proof of Lemma \ref{lem:coupling}. Time $T_0$ is a departure time (up until that point the two queue lengths are equal). $\eta$ is the time until next arrival in the original arrival process $N_a$ which becomes $0$ in the sped up process $\tilde N_a$. $T_1$ is the first unused service after $T$. Then up to $T_1$ the queue length of the sped up process is not smaller than the original one. As the picture suggests, it is possible for this ordering to persist for longer, but that depends on the realisation of the two processes and it may break down (as is also shown in the picture). }
	\label{fig:coupledQ}	
	\end{figure}
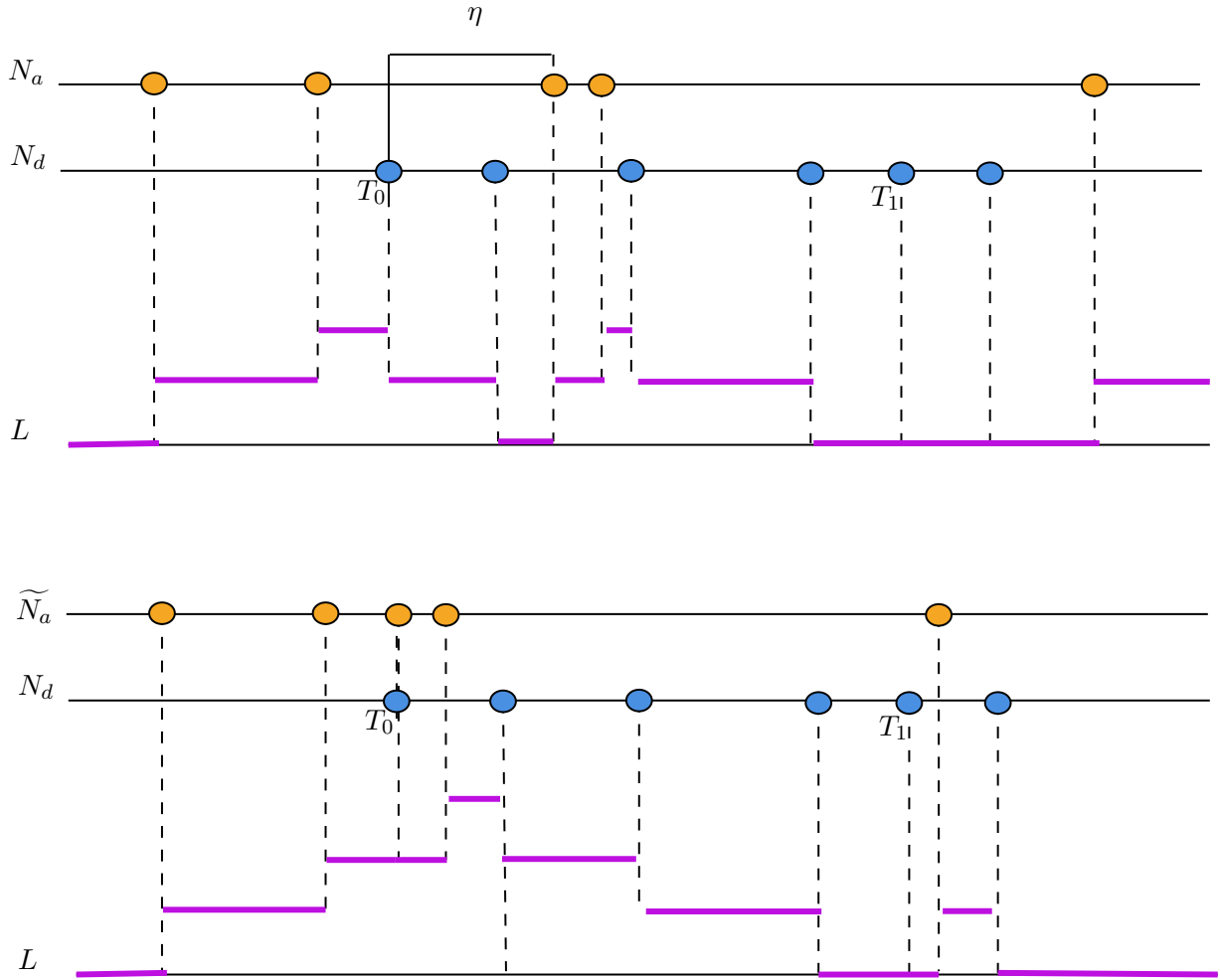
	
	\begin{proof}[Proof of Lemma \ref{lem:coupling}]
		Since no modification happens up to time $T_0$, we have that 
		$L_t =  \widetilde L_t$ for all $t < T_0$. At time $T_0$ we have a departure event. 
		If $L_{T_0-} > 0$, by construction we have that 
		\[ L_{T_0} = L_{T_0-}-1 = \widetilde L_{T_0} - 1 < \widetilde L_{T_0}. \]
		Similarly, if $L_{T_0-} = 0$, the departure at $T_0$ would have been unused and 
		$L_{T_0} = 0$ while in the modified queue $\widetilde L_{T_0} = 1$. 
		
		We now have the claim at the initial time $T_0$ and we proceed by 
		induction on the departure events and keep track of both queue lengths $L_t,  \widetilde L_t$. 
		Let $t_1 < t_2 < \ldots$ be departure events, $t_1 > T_0$. Assume that in 
		$[T_0, t_1]$ the original arrival process  had $k_1\ge 0$ arrivals. 
		That would make the queue length $L_{t_1} = (L_{T_0} + k_1 -1)\vee 0$. 
		The sped-up process $\widetilde N_{a}$ rang $k_1$ times (with the first ring now at $T_0$)
		 and because of the shift by $\eta$, it may have rang even more times. Therefore, up to time $t_1$,  
		 $L_{t_1} \le \widetilde L_{t_1}$ since only one departure occurred. 
		 
		 Now assume that up to time $t_n$ the claim remains true and $L_{t_n} \le \widetilde L_{t_n}$ 
		 while no unused service time for $L_{t_n}$ occurred. 
		 We see what happens at time $t_{n+1}$.   
		 
		  We have the following inequalities 
		 \[
		 k_{n+1} = N_a(t_{n+1}) - N_a(T_0-) \le \widetilde N_a(t_{n+1}) - \widetilde N_a(T_0-)= \tilde k_{n+1}. 
		 \] 
		 
		 Assume that  $t_{n+1}$ is not an unused service time for $L_t$, 
		 otherwise we have nothing to show. This also implies that no unused service times occurred 
		 for $\widetilde L_{t}$ since it was always no less than $L_t$. Then
		 \[
		 \widetilde L_{t_{n+1}} = L_{T_0-}+1 + \tilde k_{n+1} - n-1 =  L_{T_0-} + \tilde k_{n+1} - n \ge  L_{T_0-} +  k_{n+1} - n = L_{t_{n+1}},
		 \]		 
as required. 
	\end{proof}
	
	\begin{lemma}\label{lem:infio}
	Assume $\alpha_1 = \alpha_2=\alpha$ and $\mu = \lambda$. 
	Let  
	$\{ X^{(i)}_{\alpha, \lambda} \}_{i \ge 1}$, $\{ Y^{(i)}_{\alpha, \lambda} \}_{i \ge 1}$  be the inter-event times
	of the arrival and departure process respectively that are i.i.d.\ Mittag-Leffler distributed. 
	Consider the random walk 
	\[
	S_n = \sum_{i=1}^n (X^{(i)}_{\alpha, \lambda} - Y^{(i)}_{\alpha, \lambda}), \quad S_0 = 0%\sum_{k=1}^m \tilde X^{(i)}_{\alpha, \lambda}.
	\]	
	Then
	\begin{enumerate}
		\item $S_n$ will change sign infinitely often, $\displaystyle \varlimsup_{n\to \infty} S_n = +\infty, \varliminf_{n\to\infty} S_n = -\infty$.
		\item Assume that $S_n < 0$ and $S_{n+1} > 0$. Then there exists a time $t < t_{n+1}$  for which 
		 the two processes rang exactly $n$ times and the next event in the process is a departure attempt.
	\end{enumerate}
	\end{lemma}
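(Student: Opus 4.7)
The plan is to handle the two parts separately: for part (1) I would use the symmetry of the increments and classical random-walk theory, and for part (2) I would perform a direct bookkeeping argument on the event times.

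For part (1), I would set $Z_i := X^{(i)}_{\alpha,\lambda} - Y^{(i)}_{\alpha,\lambda}$ and note that $Z_i$ is i.i.d., non-degenerate, and symmetric in law (because $X^{(i)}_{\alpha,\lambda}$ and $Y^{(i)}_{\alpha,\lambda}$ have the same distribution). Consequently the processes $(S_n)_{n\ge 0}$ and $(-S_n)_{n\ge 0}$ have the same joint distribution. I would then invoke the standard trichotomy for one-dimensional i.i.d.\ random walks: almost surely exactly one of $S_n\to+\infty$, $S_n\to-\infty$, or $\varlimsup_n S_n=+\infty$ together with $\varliminf_n S_n=-\infty$ holds (see for instance Feller, \emph{An Introduction to Probability Theory}, Vol.\ II, Ch.\ XII, or Chung's \emph{A Course in Probability Theory}). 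The first two alternatives are ruled out by $S_n\stackrel{d}{=}-S_n$, so the oscillation scenario must hold; in particular $S_n$ changes sign infinitely often.

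For part (2), I would write $T^a_k := \sum_{i=1}^{k} X^{(i)}_{\alpha,\lambda}$ and $T^d_k := \sum_{i=1}^{k} Y^{(i)}_{\alpha,\lambda}$, so that $S_k = T^a_k - T^d_k$. The hypothesis $S_n<0$ becomes $T^a_n<T^d_n$, giving $N_a(T^d_n)\ge n$; the hypothesis $S_{n+1}>0$ becomes $T^a_{n+1}>T^d_{n+1}>T^d_n$, giving $N_a(T^d_n)\le n$. Therefore $N_a(T^d_n)=n=N_d(T^d_n)$, i.e.\ both processes have rung exactly $n$ times at $T^d_n$. Moreover, in the interval $(T^d_n, T^d_{n+1})$ there is neither an arrival (since $T^a_{n+1}>T^d_{n+1}$) nor a departure (consecutive departure times), so the first event strictly after $T^d_n$ is the $(n+1)$-th departure at $T^d_{n+1}$. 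Any $t\in(T^d_n,T^d_{n+1})$ thus satisfies the conclusion, with $t_{n+1}:=T^d_{n+1}$.

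The main obstacle is really only in part (1), where the Mittag-Leffler increments have no moments: the usual SLLN/CLT approach is unavailable, so I must rely on the qualitative trichotomy above, which makes no integrability assumption. Part (2) is then a straightforward matter of naming the event times and doing the counting, with no further obstacle expected.
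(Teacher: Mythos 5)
Your proposal is correct and follows essentially the same route as the paper: part (1) uses the symmetry of the increments together with Feller's oscillation trichotomy (the paper cites Feller, Section 12.2), and part (2) is the same bookkeeping on cumulative arrival/departure times, identifying $t_{n+1}$ with the $(n+1)$-th departure time and showing both counting processes sit at $n$ just before it.
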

	
	\begin{proof}[Proof of Lemma \ref{lem:infio}] 
		Define 
		\[
		S^X_n = \sum_{i=1}^n X^{(i)}_{\alpha, \lambda} , \quad S^Y_n = \sum_{i=1}^n  Y^{(i)}_{\alpha, \lambda}.
		\]
		Each of these represent the time of the $n$-th arrival and the time of the $n$-th departure attempt respectively. 
		\begin{enumerate}
			\item Since $X^{(i)}_{\alpha, \lambda} - Y^{(i)}_{\alpha, \lambda}$ is a symmetric random variable, the random walk is an oscillating random walk. The claim then follows from Feller (See Section 12.2 in \cite{Feller08}).
			\medskip
			
			\item  
			Define
			\be \label{tn+1}
			t_{n+1} = S^Y_{n+1}= \sum_{i=1}^{n+1} Y^{(i)}_{\alpha, \lambda}
			\ee
			
			At this time the departure process has its $n+1$-th event. Since $S_n  < 0$ we have that  the $n$-th arrival occurring at time $S^X_n$ happened before (or with) the $n$-th (and therefore before the $n+1$) departure attempt. The `with' here is in general a measure 0 event, except at the beginning when we have renewal point for both processes. Since $S_{n+1} > 0$ we also have that $n+1$ arrival happens after $t_{n+1}$. So by time $t_{n+1}$, both processes jumped $n$ times, and at time $t_{n+1}$ we have the $n+1$ departure attempt.  \qedhere
		\end{enumerate}
	\end{proof}

%\begin{lemma} 
%
%\end{lemma}
%
%\begin{proof}\end{proof}
	
	\begin{proof}[Proof of Theorem \ref{thm:Rec+Trans=Balanced}]
	
	We first treat the case $\alpha_1 = \alpha_2= \alpha$ and $\mu = \lambda$. 
	To begin with we show that we can find a time $T_1$ for which $L_{T_1}=0$ with probability 1.
	
	Assume the queue begins at $L_0 = 1$, with both arrival and departure renewals beginning at 0 and let 
	the position of the random walks 
	\[
		S^X_n = \sum_{i=1}^n X^{(i)}_{\alpha, \lambda} , \quad S^Y_n = \sum_{i=1}^n  Y^{(i)}_{\alpha, \lambda}.
	\]
	denote the time of the $n$-th arrival and departure respectively. First consider the first sign change 
	of their difference from negative to positive which will occur infinitely often by Lemma \ref{lem:infio}. 
	This occurs as we said above when both processes rang the same amount of times. 
	At time $t_{n+1}$ (from \eqref{tn+1}) we have the $n+1$ departure event. At that point, we either have a queue of length 1 
	and this will be a service time (since both processes rang $n$ times) making  $L_{t_{n+1}}=0$ or, 
	if the queue has length $L_{t_{n+1}-} \ge 2$  we have a departure. 
	But if the queue had length more than 1, then we must have had an unused departure event before, 
	say at time $t_k$ for some $k \le n$. This can only happen if the queue was already empty at time $t_k$.
	 In either case, we have found a  departure time $T_1 > 0$ at which the queue was empty. 
	 Departure time $T_1$ exists with probability 1. 
			
	Define 
		\[
		T_1 = \inf\big\{t >0: N^{Y}_{\alpha, \lambda}(t) \,\text{ rings }, L^{\alpha,\lambda}_{t-} = 0\big\}.
		\]
			At time $T_1$ (an event that clears the queue) 
			we apply Lemma \ref{lem:coupling}. Let $\eta >0$ be such that $T_1 + \eta$ 
			is the time of the next arrival event, and speed up the arrival process by $\eta$. 
			Per the construction in Lemma 	\ref{lem:coupling}, the new queue length $\tilde L$ 
			starts at 1 and we have renewal events at the beginning making the new length equal to 1. 
			But then, either $\tilde L_t$ will hit 0 with probability 1 if the original process does not have an
			 unused service and therefore the original process hits 0 by the inequality 
			 in Lemma \ref{lem:coupling}, 
			or the original process will have an unused service which also means it hit 0. 
			Therefore, in either case, the original process will hit 0 at a time $T_2 > T_1$. 
			Iterate this argument ad infinitum to see that the queue will be empty infinitely often. 
			This proves the case $\mu = \lambda$. 
			
			When $\mu > \lambda$ we can write that the departure events occur faster since 
			\be \label{eq:coup}
			S^{Y,\,\mu}_n = \sum_{i=1}^n  Y^{(i)}_{\alpha, \mu} \stackrel{(d)}{=} \frac{\lambda}{\mu} \sum_{i=1}^n  Y^{(i)}_{\alpha, \lambda}< S^{Y,\,\lambda}_n. 
			\ee
			Therefore, the queue empties faster (in distribution). In terms of the difference random walk, 
			it tends to be positive and tending to infinity suggesting a lot of unused services. 
			As such the queue hits 0 infinitely often. The coupling argument utilises \eqref{eq:coup}  and goes by arguing that by the time $S^{Y,\,\lambda}_n$ has an unused time, then already at a prior time $S^{Y,\,\mu}_n$ had an unused service time, and then Lemma \ref{lem:coupling} is utilised at the time the queue hit 0. The details are left to the interested reader. 
	
		Finally, to prove the second part of the theorem for $\lambda \ge \mu$.
		 We can repeat the arguments in the proof of Theorem \ref{thm:Rec+Trans} but now the scalings 
		$\lambda$ and $\mu$ play a role. 
		We have $N^{(\alpha)}_{\lambda}(t)$ denote the arrival renewal process 
		and $N^{(\alpha)}_{\mu}(t)$ denote the departure process with Mittag-Leffler $\alpha$ parameter 
		and scalings  $\lambda, \mu$ respectively.  For any $s<0$

	\begin{align*}
		 \P\big\{N^{(\alpha)}_{\lambda}(t) - N^{(\alpha)}_{\mu}(t) < t^{\alpha/2} \big\}
			&\le e^{-st^{\alpha/2}}\E(e^{sN^{(\alpha)}_{\mu}(t)})\E(e^{-sN^{(\alpha)}_{\lambda}(t)}) \text{ via a Chernoff bound,}\\
			&= e^{st^{\alpha/2}}E_{\alpha,1}((e^{s} -1)\lambda^\alpha t^{\alpha})E_{\alpha,1}((e^{-s} - 1)\mu^{\alpha}t^{\alpha}).
	\end{align*} 
		At this point we make a particular choice for $s$ since we can choose any negative value for it. 
		Select $s =- c^{\alpha} t^{-\alpha}$. Then the bound above becomes 
		\[
		\P\Big\{N^{(\alpha)}_\lambda(s) - N^{(\alpha)}_{\mu}(s) < t^{\alpha/2} \Big\} \le 
		 e^{c^{\alpha}t^{-\alpha/2}}E_{\alpha,1}(-(c \lambda)^\alpha +o(t^{-\alpha}))E_{\alpha,1}((c \mu)^{\alpha}+ o(t^{-\alpha}))
		\]
		Then we can find $T_0(\mu, \lambda), c_0(\mu, \lambda) > 0$ such that for  $t > T_0$ and $0< c < c_0$ so that the Taylor expansion at $c = 0$ of the upper bound above becomes 
		\[
		\Big(1 + \frac{c^\alpha}{t^{\alpha/2}} + \e_1\Big)\Big( 1 - \frac{(c\lambda)^\alpha}{\Gamma(1+\alpha)}+\e_2\Big) \Big(1+\frac{(c\mu)^\alpha}{\Gamma(1+\alpha)}+\e_3\Big) = 1 + c^{\alpha} \frac{\mu^{\alpha} - \lambda^\alpha}{\Gamma(1+\alpha)} + o (c^{\alpha}) \vee o (t^{-\alpha/2}) < 1.
		\]
	The last inequality follows only when $\mu > \lambda$. Errors $\e_1, \e_2$ and $\e_3$ 
	are all of order $o (c^{\alpha}) \vee o (t^{-\alpha/2})$.
	In order to conclude, use the fact that 
		$
		L^{\alpha_1, \alpha_2} (t) \ge N^{\alpha_1}(t) - N^{\alpha_2}(t).
		$
	The rest of the proof proceeds as in Theorem \ref{thm:Rec+Trans}, 
	where we obtain that  
		\[
		\P\{ \varlimsup_{t \to \infty} L^{\alpha, \alpha}_{\lambda, \mu}  = + \infty \} \ge 1 - c_0^{\alpha} \frac{\lambda^\alpha - \mu^\alpha}{\Gamma(1+\alpha)} > 0.
		\]

		Similarly, if $\mu = \lambda$ we obtain that 
		\[
		\P\{ \varlimsup L^{\alpha, \alpha}_{\lambda, \lambda} = + \infty \} \ge 1- c^{2\alpha} \frac{\lambda^{2\alpha}}{\Gamma(1+\alpha)^2} > 0. \qedhere
		\]
%		The case $\mu < \lambda$ follows with similar arguments, but for $s < 0$, and by 
%		repeating the steps in Theorem \ref{thm:Rec+Trans}.			
		\end{proof}.

\appendix

\section{Inductive proof of equation  \eqref{two queue relation}} 
\label{sec:A}

\begin{proof}[ Proof of equation \eqref{two queue relation}]
Let $N_a(t)$ and $N_d(t)$ denote the two counting processes representing the arrivals and departures attempts respectively and assume that $N_a(0) = N_d(0) =0$. 

We will prove the result by induction over all jump events in the time interval $[0,t]$, of which by assumption there are at most finitely many. Order all the jump times in the interval $[0,t]$ and label them using $0< \tau_1 < \tau_2 < \ldots < \tau_M \le t$.

The queue length remains at 0 until time $\tau_1-$, since no process jumped. At time $\tau_1$ we have a jump. If the jump is an arrival, then $N_a(\tau_1)=1$ while still $N_d(\tau_1)=0$.  The difference $N_a(\tau_1) - N_d(\tau_1) = 1$ while the infimum of the difference is still attained at $t = 0$ and it equals 0. So the right-hand side equals 1, which equals the queue length since the queue experienced an arrival. In the case where the jump was for departures, we have that 
\[
N_a(\tau_1) - N_d(\tau_1) = -1 = \inf_{0 \le s\le \tau_1}\{ N_a(s) - N_d(s)\},
\]
where the infimum is attained precisely at $\tau_1$. 
If $\tau_1 = \tau_2$ and both processes jump, arrivals happen before departures, so  $L_{\tau_1} = 0$ and $N_a(\tau_1) - N_d(\tau_1)=0$. 

This forms the base case of the induction. Now assume that the equation holds up to event $\tau_k$. Up until $t < \tau_{k+1}$ no process jumps, so the equality is maintained up to $\tau_{k+1}-$. 
At $\tau_{k+1}$ at least one process jumps. 

If it is $N_a$, the infimum does not change since its argument increased at this time. So both $L_{\tau_{k+1}}$ and $N_a(\tau_{k+1}) - N_d(\tau_{k+1})$ increased by 1 and equality is preserved. 

If the process that jumped is $N_d$, there are two cases to consider. If $L_{\tau_{k+1}-} = 0$  then this does not change with a jump of $N_d$. For the right-hand side 
\[
N_a(\tau_{k+1}-) - N_d(\tau_{k+1}-)= \inf_{0 \le s < \tau_{k+1}}\{ N_a(s) - N_d(s)\}.
\]
In this situation, a jump of $N_d$ actually reduces the infimum by 1 since 
\begin{align*} 
\inf_{0 \le s \le \tau_{k+1}}\{ N_a(s) - N_d(s)\} &= \inf_{0 \le s < \tau_{k+1}}\{ N_a(s) - N_d(s)\}  \wedge (N_a(\tau_{k+1}) - N_d(\tau_{k+1}))\\
&= N_a(\tau_{k+1}-) - N_d(\tau_{k+1}-) -1,
\end{align*}
which also shows the right hand side is 0. 

The other option is that $L_{\tau_{k+1}-} > 0$. Then 
\[
N_a(\tau_{k+1}-) - N_d(\tau_{k+1}-) -1\ge  \inf_{0 \le s < \tau_{k+1}}\{ N_a(s) - N_d(s)\}
\]
which leads to
\begin{align*} 
\inf_{0 \le s \le \tau_{k+1}}\{ N_a(s) - N_d(s)\} &= \inf_{0 \le s < \tau_{k+1}}\{ N_a(s) - N_d(s)\}  \wedge (N_a(\tau_{k+1}) - N_d(\tau_{k+1}))\\
&= \inf_{0 \le s < \tau_{k+1}}\{ N_a(s) - N_d(s)\}  \wedge (N_a(\tau_{k+1}-) - N_d(\tau_{k+1}-)-1)\\
&=\inf_{0 \le s < \tau_{k+1}}\{ N_a(s) - N_d(s)\}. 
\end{align*}
In other words, in this case the infimum remains unchanged, while $N_a(\tau_{k+1}) - N_d(\tau_{k+1}) = N_a(\tau_{k+1}-) - N_d(\tau_{k+1}-) -1$ and the right hand-side is reduced by 1, as is the queue length.

The case of the simultaneous jump is treated similarly. 
\end{proof}

\begin{remark}
While in the article we are dealing with continuous processes with the probability of both jumping at the same time equal to 0, it could be that other models might exhibit simultaneous jumps. In that case, the formula still remains true if we adopt the convention that arrivals happen before departures (instantaneously). It wouldn't necessarily hold if departures were happening before arrivals. The problem would come from events where the queue length process is already at 0:   
\[
N_a(\tau_{k+1}-) - N_d(\tau_{k+1}-)= \inf_{0 \le s < \tau_{k+1}}\{ N_a(s) - N_d(s)\}.
\]
Then if at $\tau_{k+1}$ we have a simultaneous jump and thus
\[
N_a(\tau_{k+1}-) - N_d(\tau_{k+1}-) = N_a(\tau_{k+1}) - N_d(\tau_{k+1}),
\]
while
\begin{align*} 
\inf_{0 \le s \le \tau_{k+1}}\{ N_a(s) - N_d(s)\} &= \inf_{0 \le s < \tau_{k+1}}\{ N_a(s) - N_d(s)\}  \wedge (N_a(\tau_{k+1}) - N_d(\tau_{k+1}))\\
&= N_a(\tau_{k+1}-) - N_d(\tau_{k+1}-)= N_a(\tau_{k+1}) - N_d(\tau_{k+1}).
\end{align*}
Therefore if the formula remained true, the queue length would have been zero. However, if arrivals are happening after departures we have that queue length would equal 1. \qed
\end{remark}
%\begin{proof}
%\begin{align*}
%\P\{ \inf_{t \in [n, n+1)} L \le M \} &= \P\Big\{ \inf_{t \in [n, n+1)} L \le M,  L_n \ge n^{\alpha/2} \Big\} +  \P\Big\{ \inf_{t \in [n, n+1)} L \le M,  L_n < n^{\alpha/2}\Big \}\\ 
%&\le  \P\Big\{ \inf_{t \in [n, n+1)} L \le M,  L_n \ge n^{\alpha/2} \Big\} +  \P\Big\{L_n < n^{\alpha/2}\Big \}\\
%&\le  \P\Big\{ N_{\mu}(n+1) - N_{\mu}(n)\ge n^{\alpha/4}  \Big\} +  \P\Big\{L_n < n^{\alpha/2}\Big \}\\
%&\le  \P\Big\{ X_1 \le 1, \ldots, X_{n^{\alpha/4}} \le 1  \Big\} + n^{-\gamma} \le \delta^{n^{\alpha/4}} + n^{-\gamma} 
%\le Cn^{-\gamma}.
%\end{align*}
%
%Let $A_n = \{ \inf_{t \in [n, n+1)} L \le M \}$ and consider the
%\[ \liminf_{n \to \infty} A_n = \bigcup_{n=1}^{\infty} \bigcap_{j \ge n} A_j\]
%Then we showed 
%\[
%\P\{  \liminf_{n \to \infty} A_n \} = 0.
%\]
%But, 
%\[
%\liminf_{n \to \infty} A_n = \{ \omega: \omega \in \text{ all but finitely many of the } A_n\}
%\]
%\end{proof}

\section{Proof of Theorem \ref{thm:Q1mgf}}
\label{sec:B}
\begin{proof}[Proof of Theorem \ref{thm:Q1mgf}]
	Let $M_{\alpha, \lambda}(z,t) = \E(e^{z L^{(1)}_{\alpha, \lambda}(t)}) $ denote the moment generating function 
	of the queue length at time $t$. For simplicity we assume $\lambda =1$; the only effect of a 
	different $\lambda$ below would be that the right hand side is multiplied with an extra $\lambda^\alpha$. 
	\allowdisplaybreaks
	\begin{align*}
	\frac{d^{\alpha}}{dt^{\alpha}}M_{\alpha, 1}(z,t) 
	&= \frac{d^{\alpha}}{dt^{\alpha}}\E(e^{z L^{(1)}_{\alpha, 1}(t)})
		= \sum_{n=0}^{\infty}e^{zn}\frac{d^{\alpha}}{dt^{\alpha}}p_{0,n}(t)\\ 
	&= \frac{d^{\alpha}}{dt^{\alpha}}p_{0,0}(t)  +  \sum_{n=1}^{\infty}e^{zn}( - p_{0,n}(t) +  p p_{0,n-1}(t) + (1-p) p_{0,n+1}(t))\\
	&= \frac{d^{\alpha}}{dt^{\alpha}}p_{0,0}(t) - (M_{\alpha, 1}(z,t) -p_{0,0}(t))\\
	&\phantom{xxxxx}+ p  e^z M_{\alpha, 1}(z,t) +(1-p)e^{-z}( M_{\alpha, 1}(z,t) -  p_{0,0}(t) -  e^zp_{0,1}(t)))\\
	&= - p p_{0,0}(t) + (1-p) p_{0,1}(t) + p_{0,0}(t) - (1-p)e^{-z}p_{0,0}(t) - (1-p)p_{0,1}(t)\\
	&\phantom{xxxxx} + \big(pe^z  - 1  + (1-p)e^{-z}\big)M_{\alpha, 1}(z,t) \\
	&=(1-p)(1 - e^{-z}) p_{0,0}(t) +  \big(pe^z  - 1  + (1-p)e^{-z}\big)M_{\alpha, 1}(z,t).
	\end{align*}
	Multiply through by $e^z$ to obtain 
	\be
	e^z\frac{d^{\alpha}}{dt^{\alpha}}M_{\alpha, 1}(z,t) 
	= (1-p)(e^{z}-1)p_{0,0}(t) + \big(pe^{2z} - e^z + (1-p)\big)M_{\alpha, 1}(z,t).
	\ee
	We compute the Laplace transform (for the $t$ variable) and we see 
	\be
	e^z\Big( s^{\alpha} \widetilde{M}_{\alpha, 1}(z,s) - s^{\alpha-1}\Big) 
	=  (1-p)(e^{z}-1)\widetilde{p}_{0,0}(s) + \big(pe^{2z} - e^z + (1-p)\big)\widetilde{M}_{\alpha, 1}(z,s).
	\ee	
	Solving for  $\widetilde{M}_{\alpha, 1}(z,s)$ we obtain
%	\begin{align*}
%	\big(e^z s^{\alpha} - \big(pe^{2z} - e^z + (1-p)\big)\big) \widetilde{M}_{\alpha, 1}(z,s) &= e^zs^{\alpha-1} + (pe^z + 1-p)\widetilde{p}_{0,0}(t)
%	\end{align*}
	\begin{align} \label{lapmoment}
	\widetilde{M}_{\alpha, 1}(z,s) &= \frac{e^zs^{\alpha-1} 
	+  (1-p)(e^{z}-1)\widetilde{p}_{0,0}(s)}{e^z s^{\alpha} - \big(pe^{2z} - e^z + (1-p)\big)}.
	\end{align}
	
	Set $e^z = u$. The roots of the denominator  $r_1(s), r_2(s)$, with $r_1(s)> r_2(s)$ for the $u$ variable 
	are given by 
	\be\label{eq:momroots}
	r_{1,2}(s) = \frac{1+s^{\alpha} \pm \sqrt{(1 - 2p)^2 + 2s^{\alpha} + s^{2\alpha}}}{2p}
	=  \frac{1+s^{\alpha} \pm \sqrt{(1 + s^{\alpha})^2 -4p(1-p)}}{2p}.
	\ee
	The following relations hold, and we will be using them without particular mention
	\be \label{eq:useful}
	r_1+r_2 = \frac{1+s^{\alpha}}{p}, \quad r_1r_2 = \frac{1-p}{p}, \quad (1-r_1)(1-r_2)=\frac{-s^a}{p}.
	\ee
	
	The first expression \eqref{eq:momroots} shows that the roots are real numbers, 
	while the second shows that $r_2(s) >0$.
	Moreover it is straightforward to check that $r_2(s) < 1$ while the third equality 
	in \eqref{eq:useful} gives $r_1(s)>1$.  
	
	The expression for $\widetilde{p}_{0,0}$ can be calculated using Rouche's Theorem for the probability generating function \cite{Bailey54}. Briefly, equation \ref{lapmoment} can be converted to an equivalent equation for the p.g.f again using the substitution $e^z = u$; then Rouche's Theorem can be applied to show that the denominator contains a root inside the unit disc, which must match with a root of the numerator. This was done in \cite{Cahoy15}, giving
	 \[
	 r_2(s)s^{a-1} + (1-p)(r_2(s)-1) \widetilde{p}_{0,0}(s) =0 \Longleftrightarrow 
	 \widetilde{p}_{0,0}(s)= \frac{r_2(s)s^{\alpha-1}}{(1-p) (1- r_2(s))}.
	 \]
	  In particular, we obtain the Laplace transform 
	  \be\label{eq:lappoo}
	  \widetilde{p}_{0,0}(s) = \frac{s^{a-1}}{p(1-r_2(s))r_1(s)} 
	  = \frac{r_2(s)s^{a-1}}{(1-p)(1-r_2(s))} = \frac{1}{s} - \frac{p}{1-p} \frac{r_2(s)}{s}.
	  \ee
	  \begin{remark} In the case $\alpha = 1$ the inverse Laplace transform can be given in terms of elementary 
	  functions and we obtain
	  the formulas for the M/M/1 queue given in \cite{Bailey54} The paper gives results for the p.g.f.~ but one can check for the m.g.f.~ by making the substitution $u=e^z$.  
	  \end{remark}
	  Substitute  $\widetilde{p}_{0,0}(s)$ in \eqref{lapmoment} to see
	  \[
	  \widetilde{M}_{\alpha, 1}(z,s) 
%	  = \frac{s^{\alpha-1}\Big( e^z -  \frac{ r_2(s)-e^{z}r_2(s)}{1-r_2(s)}\Big)}{-p(e^z - r_1(s))(e^{z}-r_2(s))}	
%	  =\frac{s^{\alpha-1}\Big( \frac{ e^z -  r_2(s)}{1-r_2(s)}\Big)}{-p(e^z - r_1(s))(e^{z}-r_2(s))}
	  =\frac{-s^{\alpha-1}}{p(e^z - r_1(s))(1-r_2(s))},
	  \]
	  which concludes the proof of the theorem.
	  \end{proof}

\section{Derivation of the fractional evolution equations}
\label{sec:C}

Let $q_{i,j}$ denote the transition probabilities of moving from $i$ customers to $j$ customers at an event time. Then we can say
\begin{align} \label{Transition probilities}
    q_{k,k} &= \beta, \\
    q_{k,k+1} &= \begin{cases}
        1 - \beta \hspace{15.pt} &\text{ if } k = 0 \\
        (1-\beta) \cdot p \hspace{15.pt} &\text{ o/w,}
    \end{cases} \\
    q_{k, k-1} &= \begin{cases}
        0 \hspace{15.pt} &\text{ if } k = 0 \\
        (1-\beta) \cdot (1-p) \hspace{15.pt} &\text{ o/w,}
    \end{cases}
\end{align}
with all other probabilities equal to zero.

Now define $p_{i}(t) = \bP\{X(t) = i \}$ to be the probability of finding $i$ customers in the queue at time $t$. We then have that $p_i(t)$ satisfies the forward equations
\begin{equation}
    p_i(t) = \bar{F}_T^{(\alpha)}(t)\delta_0 + \sum_{k \in \{i - 1, i, i + 1\}}q_{k, i}\int^t_0 p_k(u)f^{(\alpha)}_T(t-u) du.
\end{equation}
We now want to Laplace transform equation to get
\begin{equation}
    \tilde{p_i}(s) = \tilde{\bar{F}}_T^{(\alpha)}(s)\delta_0 + \beta\tilde{f}^{(\alpha)}_T(s)\tilde{p}_i(s) + \left((1-\beta)p\right)\tilde{f}^{(\alpha)}_T(s)\tilde{p}_{i-1}(s) + \left((1-\beta)(1-p)\right)\tilde{f}^{(\alpha)}_T(s)\tilde{p}_{i+1}(s)
\end{equation}
Now let us multiply both sides of this equation through by $s$ and then subtract $\delta_0$ from both sides, which yields
\begin{align}
    \mathcal{L}\left(\frac{dp_i(t)}{dt}; s\right) &= s\tilde{\bar{F}}_T^{(\alpha)}(s)\delta_0 - \delta_0 + s\beta\tilde{f}^{(\alpha)}_T(s)\tilde{p}_i(s) + s\left((1-\beta)p\right)\tilde{f}^{(\alpha)}_T(s)\tilde{p}_{i-1}(s) \\
    &\phantom{xxxx}+ s\left((1-\beta)(1-p)\right)\tilde{f}^{(\alpha)}_T(s)\tilde{p}_{i+1}(s) \nonumber \\
    &= \frac{s^{\alpha}}{\lambda^{\alpha} + s^{\alpha}}\delta_0 - \delta_0 + \frac{s\lambda^{\alpha} \beta}{\lambda^{\alpha} + s^{\alpha}}\tilde{p}_i(s) + \frac{s\lambda^{\alpha}}{\lambda^{\alpha} + s^{\alpha}}\left((1-\beta)p\right)\tilde{p}_{i-1}(s) \nonumber \\
    &\phantom{xxxx}+ \frac{s \lambda^{\alpha}}{\lambda^{\alpha} + s^{\alpha}}\left((1-\beta)(1-p)\right)\tilde{p}_{i+1}(s). \nonumber
\end{align}
Rearranging this gives
\begin{align}
    \frac{\lambda^{\alpha} + s^{\alpha}}{\lambda^{\alpha} s}\mathcal{L}\left(\frac{dp_i(t)}{dt}; s\right) &= -\frac{1}{s}\delta_0 +  \beta\tilde{p}_i(s) + \left((1-\beta)p\right)\tilde{p}_{i-1}(s) 
    + (1-\beta)(1-p)\tilde{p}_{i+1}(s).  
\end{align}
The inverse Laplace transform of $\frac{y + s^{\alpha}}{s}$ is given by
\begin{equation}
    \mathcal{L}^{-1}\left(\frac{y + s^{\alpha}}{s}; t\right) = \frac{t^{-\alpha}}{\Gamma(1-\alpha)} + y,
\end{equation}
and by substituting this in with $y = \lambda^{\alpha}$ we obtain
\begin{align} \label{fractional kolmogorov eqn}
    \frac{d^{\alpha}p_i(t)}{dt^{\alpha}} = \lambda^{\alpha}\left(\beta - 1\right)p_i(t) +  \lambda^{\alpha}(1-\beta)p p_{i-1}(t) + \lambda^{\alpha}(1-\beta)(1-p) p_{i+1}(t). 
\end{align}
Similarly, we can find the boundary conditions
\begin{align} \label{fractional kolmogorov boundary eqn}
    \frac{d^{\alpha}p_0(t)}{dt^{\alpha}} = -  \lambda^{\alpha}(1-\beta)pp_0(t) +  \lambda^{\alpha}(1-\beta)(1-p) p_{1}(t).
\end{align}

\bibliographystyle{plain}
\bibliography{GG1Ref}

\end{document}